\documentclass[reqno,english]{amsart}
\usepackage{amsfonts,amsmath,latexsym,verbatim,amscd,mathrsfs,color,array}

\usepackage{amsmath,amssymb,amsthm,amsfonts,graphicx,color}
\usepackage{amssymb}
\IfFileExists{pdfsync.sty}{\usepackage{pdfsync}}{}
\ifdefined\synctex\synctex=1\fi
\usepackage{epstopdf}
\usepackage[colorlinks=true,hypertexnames=false]{hyperref}
\usepackage{subfigure}
\usepackage{bm}
\usepackage{needspace}
\usepackage{booktabs} 
\usepackage{amsmath} 

\makeatletter
\def\@currentlabel{2.1}\label{e:dispaa}
\def\@currentlabel{2.21}\label{e:dispau}
\def\@currentlabel{2.22}\label{e:dispav}
\def\@currentlabel{2.23}\label{e:dispaw}
\def\@currentlabel{2.24}\label{e:dispax}
\def\theequation{\thesection.\@arabic\c@equation}
\makeatother
\let\oldbibliography\thebibliography
\renewcommand{\thebibliography}[1]{%
\oldbibliography{#1}%
\setlength{\itemsep}{0pt}%
}

\renewcommand{\theequation}{\thesection.\arabic{equation}}
\newtheorem{lemma}{Lemma}[section]

\newtheorem{definition}{Definition}[section]

\newtheorem{proposition}{Proposition}[section]
\newtheorem{corollary}{Corollary}[section]
\newtheorem{remark}{Remark}[section]
\newcommand{\bremark}{\begin{remark} \em}
\newcommand{\eremark}{\end{remark} }

\newtheorem*{thmA}{Theorem A}
\newtheorem*{thmB}{Theorem B}

\newtheorem*{thmA'}{Theorem A'}
\newtheorem{open problem}{Open Problem}[section]
\newtheorem{open question}{Open Question}[section]

\newtheorem*{conjecture a}{Conjecture A}
\newtheorem{open questions on hexagonal lattice}{Open Questions on hexagonal lattice}[section]

\newtheorem{theorem}{Theorem}[section]

\newtheorem*{numerical conjecture}{Numerical Conjecture}
\newtheorem*{numerical assertion}{Numerical Assertion}

\newcommand{\R}{{\mathbb R}}

\newcommand{\BE}{\begin{equation}}
\newcommand{\BEN}{\begin{equation*}}
\newcommand{\EE}{\end{equation}}
\newcommand{\EEN}{\end{equation*}}
\newcommand{\BL}{\begin{lemma}}
\newcommand{\EL}{\end{lemma}}
\newcommand{\BT}{\begin{theorem}}
\newcommand{\ET}{\end{theorem}}
\newcommand{\BP}{\begin{proposition}}
\newcommand{\EP}{\end{proposition}}
\newcommand{\BC}{\begin{corollary}}
\newcommand{\EC}{\end{corollary}}

\renewcommand{\Im}{\operatorname{Im}}
\usepackage{amsmath}

\DeclareMathOperator*{\Minima}{minima}

\begin{document}

\title[Mueller-Ho Conjecture]{\bf A Complete Proof of Mueller--Ho Conjecture}

\author{Senping Luo}

\author{Juncheng Wei}

\address[S.~Luo]{School of Mathematics and statistics, Jiangxi Normal University, Nanchang, 330022, China}
\address[J.~Wei]{Department of Mathematics, Chinese University of Hong Kong, Shatin, NT, Hong
Kong}

\email[S.~Luo]{luosp1989@163.com}

\email[J.~Wei]{wei@math.cuhk.edu.hk}

\begin{abstract}
For $s>0$, and $z\in\mathbb H=\{z=x+iy:y>0\}$, define
 $\theta(s;z)=\sum_{m,n\in\mathbb Z}e^{-\pi s\big(m^2y+\frac{(mx-n)^2}{y}\big)}$ and $
 J(z;a,b)=\sum_{m,n\in\mathbb Z}e^{-\pi \big(m^2y+\frac{(mx-n)^2}{y}\big)}
                  \cos\bigl(2\pi(ma+nb)\bigr)$ for the classical and shifted theta functions (Gaussian lattice sums), respectively.

We prove that, up to the modular group, there exist three thresholds $0<\alpha_a<\alpha_b<\alpha_c<1$ such that
\begin{equation}\aligned\nonumber
\Minima_{z\in\mathbb H,\,(a,b)\in\mathbb R^2}\Big(\theta(1;z)+\alpha J(z;a,b)\Big)=
\begin{cases}
\;\big(e^{i{\pi}/{3}};1/3,1/3\big), &\hbox{if}\;\; \alpha\in[0,\alpha_a),\\
\;\big(e^{i{\pi}/{3}};1/3,1/3\big)\;\hbox{or}\;\big(e^{i{\varphi_{\alpha_a}}};1/2,1/2\big), &\hbox{if}\;\; \alpha=\alpha_a,\\
\;\big(e^{i{\varphi_\alpha}};1/2,1/2\big),\;\varphi_\alpha\in(\varphi_{\alpha_a},\frac{\pi}{2}),\;\; &\hbox{if}\;\; \alpha\in(\alpha_a,\alpha_b),\\
\;\big(i;1/2,1/2\big),\; &\hbox{if}\;\; \alpha\in[\alpha_b,\alpha_c],\\
\;\big(iy_\alpha;1/2,1/2\big),\; y_\alpha\in(1,\sqrt3],\;\;y_1=\sqrt3,\;\;&\hbox{if}\;\; \alpha\in(\alpha_c,1],
\end{cases}
\endaligned\end{equation}
where the complex variable $z$ and vector $(a,b)$, represent the total vortex shapes and relative positions of the two-component Bose gas, respectively. Consequently,
this completely proves the conjecture of Mueller and Ho~\cite{Mue2002} (2002) arising in Bose--Einstein condensates.

\end{abstract}

\maketitle

\section{Introduction and main results}\label{sec:introduction}

\subsection{Statement of the main results}

The theta function associated with the lattice $ \Lambda$ is defined as
 \begin{equation}\nonumber
\theta (s; \Lambda):=\sum_{\mathbb{P}\in\Lambda} e^{-\pi s|\mathbb{P}|^2}.
\end{equation}
Let $ z\in \mathbb{H}:=\{z\in\mathbb{C}: \Im(z)>0\}$ and
\begin{equation}\label{Lam}
\Lambda ={\frac{1}{\sqrt{\Im(z)}}}\Big({\mathbb Z}\oplus z{\mathbb Z}\Big)
\end{equation}
 with area of unit cell is $1$ be the lattice in $ \mathbb{R}^2$ parameterized by $z$, then one has
 \begin{equation}
 \label{thetas}
\theta (s; z):=\theta (s; \Lambda)=\sum_{(m,n)\in\mathbb{Z}^2} e^{- \frac{\pi\cdot s }{\Im(z) }|mz+n|^2}.
\end{equation}

Montgomery \cite{Mon1988} proved the following celebrated theorem:
\begin{thmA}[Montgomery, 1988] For $s>0$, up to the action by the modular group,
$$
\Minima_{z\in\mathbb{H}}\theta(s;z)=e^{i\frac{\pi}{3}}.
$$
\end{thmA}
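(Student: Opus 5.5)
We will prove Theorem A: for every fixed $s>0$, the minimum of $\theta(s;z)$ over $\mathbb H$, taken up to $SL_2(\mathbb Z)$, is attained exactly at $z=e^{i\pi/3}$. The plan follows Montgomery's original strategy.

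\textbf{Reduction.} Since $\theta(s;z)$ is invariant under the modular group and under $z\mapsto -\bar z$, we may restrict to the closed region
\[
\mathcal D=\{z=x+iy:\ 0\le x\le \tfrac12,\ |z|\ge 1\}.
\]
Poisson summation in $n$ gives
\[
\theta(s;z)=\sqrt{\tfrac{y}{s}}\sum_{m,k\in\mathbb Z} e^{-\pi s m^2 y}\,e^{-\pi k^2 y/s}\cos(2\pi k m x),
\]
which expresses $\theta$ as a cosine series in $x$. Moreover the functional equation $\theta(1/s;z)=s\,\theta(s;z)$ lets us assume $s\ge 1$.

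\textbf{Step 1: monotonicity in $x$.} We show $\partial_x\theta(s;z)<0$ for $0<x<\tfrac12$ and $y\ge \sqrt3/2$. The approach is to write
\[
\theta(s;z)=\sqrt{\tfrac{y}{s}}\sum_{m} e^{-\pi s m^2 y}\,\vartheta_3\!\bigl(mx;\ e^{-\pi y/s}\bigr),
\]
where $\vartheta_3(u;q)=\sum_k q^{k^2}e^{2\pi i k u}$, and then use the Jacobi triple product
\[
\vartheta_3(u;q)=\prod_{n\ge1}(1-q^{2n})(1+2q^{2n-1}\cos 2\pi u+q^{4n-2}).
\]
This makes the $m=\pm1$ term strictly decreasing in $x$ on $(0,\tfrac12)$, with a quantitative lower bound on its derivative. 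The terms with $|m|\ge2$ carry the factor $e^{-4\pi s y}$ and are dominated by the $m=\pm1$ contribution; controlling them is a routine tail estimate.

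\textbf{Step 2: reduction to a one-variable problem.} By Step 1, the minimum over $\mathcal D$ lies on the line $x=\tfrac12$, $y\ge\sqrt3/2$. Now use the identity
\[
\theta\bigl(s;\tfrac12+iy\bigr)=2\,\theta(s;2iy)-\theta\bigl(s;\tfrac{iy}{2}\bigr),
\]
which holds after suitable rescaling of the lattices, or alternatively apply the product formula directly. This reduces the problem to showing that
\[
g(y)=\theta\bigl(s;\tfrac12+iy\bigr)
\]
is increasing for $y>\sqrt3/2$. The point $y=\sqrt3/2$ corresponds to $e^{i\pi/3}$ modulo translation, and it is the fixed point of the modular involution that maps the line $x=\tfrac12$ to the arc $|z|=1$. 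Hence $g'(\sqrt3/2)=0$ by symmetry, and it suffices to show $g'>0$ on the rest of the half-line. For large $y$ this follows from the leading asymptotics $\theta\approx\sqrt{y/s}$, and the remaining compact range is handled with explicit exponential-sum bounds.

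\textbf{Main obstacle.} We expect the main difficulty to be Step 2 near $y=\sqrt3/2$, uniformly in $s\ge1$. There the derivative vanishes, so crude tail bounds do not suffice. To handle it we will work with the log-derivative of the triple product, where each factor contributes a term of definite sign. This sign structure should give positivity of $g'$ away from the critical point, and a second-derivative check confirms that the critical point is a strict minimum.
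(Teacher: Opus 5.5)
The paper gives no proof of Theorem~A. It is quoted from Montgomery (1988) and used only as a black box (at $\alpha=0$ and in \eqref{eq:coupling-interpolation}), so your outline has to stand on its own. Your reduction ($\mathcal G$-invariance and $\theta(1/s;z)=s\,\theta(s;z)$) and Step~1 are the standard first half. Step~1 is not a plain $e^{-3\pi sy}$ tail estimate, though. For large $s$, the $|m|=1$ contribution to $\partial_x\theta$ near $x=\frac12$ is only of order $e^{-\pi s(y+1/(4y))}$. What you must check is the exponent gap $3y-\frac1{4y}>0$ between the vector $(2,1)$ and the pair $(1,0),(1,1)$.

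\textbf{The gap: Step~2.} This is where the whole difficulty lies. Splitting the lattice of $\frac12+iy$ into two cosets of $\mathbb Z+2iy\mathbb Z$ gives
\[
 g(y)=\vartheta_3(s/y)\,\vartheta_3(4sy)+\vartheta_2(s/y)\,\vartheta_2(4sy).
\]
This is a \emph{sum} of two products, not a product. The factors with argument $s/y$ increase in $y$, and those with argument $4sy$ decrease. So $g'$ is a difference of positive quantities (the row $m=0$ against the rows $m=\pm1$). These cancel exactly at $y=\frac{\sqrt3}{2}$, where $g$ has its strict minimum, so $g'$ must change sign there.

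An argument in which each term from the triple-product factors has a sign independent of $y$ cannot detect this sign change. Hence the log-derivative mechanism you propose cannot yield $g'>0$.

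The fallback arguments also fail uniformity in $s$:
\begin{itemize}
\item The ``second-derivative check'' gives only a neighbourhood of $\frac{\sqrt3}{2}$. You do not control its size uniformly in $s\ge1$, since all relevant terms are of order $e^{-2\pi s/\sqrt3}$.
\item The regime $\theta\approx\sqrt{y/s}$ starts only at $y\gg s$. So the ``remaining compact range'' is not compact uniformly in $s$.
\end{itemize}

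Two smaller points:
\begin{itemize}
\item Your displayed identity is false as written: as $y\to\infty$ its right side is $\sim\frac{3}{\sqrt2}\sqrt{y/s}$, while the left side is $\sim\sqrt{y/s}$.
\item $g'(\frac{\sqrt3}{2})=0$ follows from the order-three rotation $z\mapsto1-1/z$ fixing $e^{i\pi/3}$, not from a single involution.
\end{itemize}

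\textbf{A possible fix.} You can close the gap by letting the modular group handle the neighbourhood of $e^{i\pi/3}$, instead of analysing the degenerate critical point.
\begin{enumerate}
\item Put $T(y)=\frac{4y}{4y^2+1}$. Applying $z\mapsto1-1/z$ and then $z\mapsto1-\bar z$ shows that $\frac12+iy$ is equivalent to $\frac{2}{4y^2+1}+iT(y)$.
\item For $\frac{\sqrt3}{2}<y\le1+\frac{\sqrt3}{2}$, this point has real part $<\frac12$ and height $T(y)\in[\frac12,\frac{\sqrt3}{2})$.
\item Prove Step~1 on the larger strip $0<x<\frac12$, $y\ge\frac12$; the gap $3y-\frac1{4y}$ is still $\ge1$ there. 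This gives $g(y)>g(T(y))$.
\item Applying $z\mapsto1-1/z$ to $\frac12+iT(y)$ lands on the unit circle, with real part in $[0,\frac12)$ and height $T^2(y)\in(\frac{\sqrt3}{2},1]$. Step~1 then gives $g(y)>g(T^2(y))$.
\item Hence, by compactness, the minimum of $g$ on $[\frac{\sqrt3}{2},1+\frac{\sqrt3}{2}]$ is attained only at $\frac{\sqrt3}{2}$.
\item For $y\ge1+\frac{\sqrt3}{2}$, the crude bound $g(y)>\vartheta_3(s/y)\ge1+2e^{-\pi s/y}$ already exceeds $\theta(s;e^{i\pi/3})=1+6e^{-2\pi s/\sqrt3}+O(e^{-2\sqrt3\pi s})$ for $s\ge1$.
\end{enumerate}
With this route, no uniform-in-$s$ analysis at the critical point is needed.
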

Here $e^{i\pi/3}$ represents the triangular/hexagonal lattice. Theorem A originally comes from analytical number theory,
while has laid foundations in many lattice minimization problems and has been frequently used in applied mathematical and physical models such as  crystallizations of particle interactions (Radin \cite{Radin1987,Radin1991}, Blanc-Lewin \cite{Bla2015}, B\'etermin \cite{Bet2016,Bet2018}, B\'etermin-Zhang \cite{Bet2015}),  Ginzburg-Landau theory in superconductors (Abrikosov \cite{Abr}, Sandier-Serfaty \cite{Serfaty2010,Serfaty2012}, Serfaty \cite{Serfaty2014}), Ohta-Kawasaki models in di-block copolymers (Chen-Oshita \cite{Che2007}, Goldman-Muratov-Serfaty \cite{GMS2013}, Ren-Wei \cite{RW2015}), frame operator norms (Faulhuber \cite{Fau2018}) and many others. The related minimization of theta and eta functions on lattices has application to Gross-Pitaevskii theory in superfluids or  Bose-Einstein condensates, Ohta-Kawasaki models triblock copolymers (Luo-Ren-Wei \cite{Luo2019}) and many others.

For the higher dimensional cases, the corresponding minimization problems on lattices was first investigated by Sarnak and Strombergsson \cite{Sar2006} (in dimensions 3,4,8 and 24) and recently by Cohn-Kumar-Miller-Radchenko-Viazovska \cite{Coh2017, Coh2019} for universal optimality of $E_8$ and Leech lattices in dimensions 8 and 24. For relations with sphere packing problems, see Viazovska \cite{Via2017} (in dimension 8), Cohn-Kumar-Miller-Radchenko-Viazovska \cite{Coh2017} (in dimension 24), and Conway-Sloane \cite{Conwaybook} and the references therein. We also note that minimization problems for Dedekind eta function (equivalent to the theta function \eqref{thetas} via Mellin transform) also arise in the extremal determinants of Laplace-Beltrami Operators. See Osgood-Phillips-Sarnak \cite{Osg1988}, Faulhuber \cite{Fau2020}, B\'etermin-Sandier \cite{BetS}, and the reference therein.

The shifted theta function by a vector $u$ (Gaussian lattice sum) is defined as
\begin{equation}
 \label{thetasB}
J (s; u):=\theta (s; \Lambda+u)=\sum_{\mathbb{P}\in\Lambda} e^{-\pi s|\mathbb{P}+u|^2}.
\end{equation}

Let $u=(a,b)\in\mathbb R^2$, and by \eqref{Lam},
\begin{equation}\nonumber
 \theta (s; z,a,b):=\theta (s; \Lambda+u)=\sum_{m,n\in\mathbb Z}e^{-\pi \big(m^2y+\frac{(mx-n)^2}{y}\big)}
                  \cos\bigl(2\pi(ma+nb)\bigr).
\end{equation}

A recent important result proved that

\begin{thmB}[B\'etermin, Faulhuber, and Steinerberger \cite{BFS}] For $s>0$, up to the action by the modular group,
\begin{equation}\nonumber
 \max_{z\in\mathbb{H}}\min_{(a,b)\in\mathbb R^2}\theta (s; z,a,b)\;\;\hbox{is achived at}\;\; e^{i\pi/3}.
\end{equation}
\end{thmB}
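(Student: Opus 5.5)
The plan for Theorem B is to split the max--min into two problems. First I will solve the inner minimization over $(a,b)$ for each fixed $z$. Then I will maximize the resulting function of $z$ over the fundamental domain $\mathcal D=\{|z|\ge1,\,0\le x\le \tfrac12\}$.

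\textbf{Inner minimization.} By Poisson summation,
$$\theta(s;\Lambda+u)=\frac1s\sum_{\mathbb P^*\in\Lambda^*}e^{-\pi|\mathbb P^*|^2/s}\cos(2\pi\langle \mathbb P^*,u\rangle).$$
This is a positive-coefficient cosine series, i.e.\ a heat kernel on the torus $\mathbb R^2/\Lambda$. I expect the minimizer over $u$ to be a \emph{deep hole} of $\Lambda$, meaning the circumcenter of the Delaunay triangle. For the triangle with vertices $0,1,z$ (after normalization) and $z\in\mathcal D$, this point is explicit.

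The tool is the Jacobi triple product. Writing the lattice as $\mathbb Z\oplus z\mathbb Z$ and summing over one index, $\theta(s;z,a,b)$ becomes a sum of one-dimensional theta functions $\vartheta_3$ in $b$, each of which factors as a product. Each factor $\prod(1+2q^{2k-1}\cos 2\pi b+q^{4k-2})$ is minimized at $b=\tfrac12$. The honest route, following Baernstein's argument for the heat kernel on tori, is a rearrangement/symmetrization: fix the first coordinate, minimize in the other, then use reflection symmetry. This gives the candidate $u_z=$ circumcenter. In the coordinates $(a,b)$ of the theorem this is $(1/3,1/3)$ at the hexagonal point and $(1/2,1/2)$ at the square point.

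\textbf{Outer maximization.} Set $F(s;z)=\theta(s;\Lambda_z+u_z)$. Next I reduce to the boundary of $\mathcal D$ by monotonicity:
\begin{itemize}
\item Show $\partial_x F\ge 0$ on $\mathcal D$, pushing toward $x=\tfrac12$.
\item Show $\partial_y F\le 0$ along $x=\tfrac12$, pushing toward $y=\sqrt3/2$.
\item Alternatively, show monotonicity along the arc $|z|=1$ toward $e^{i\pi/3}$.
\end{itemize}
These derivatives are computed from the explicit series, as in Montgomery's proof of Theorem A. The dominant terms are of the form $e^{-\pi s y}$-type exponentials, and the tails are controlled by elementary estimates. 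The regimes $s$ small and $s$ large are exchanged through the duality $F(s;\cdot)\leftrightarrow F(1/s;\cdot)$ given by Poisson summation. One subtlety is that $u_z$ varies with $z$, but by the envelope theorem $\nabla_z F=\partial_z\theta|_{u=u_z}$, so only the partial derivative in $z$ is needed.

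\textbf{Main obstacle.} I expect the hardest step to be the inner minimization for \emph{general} $z$ and $s$. Proving that the deep hole is the global minimizer of the shifted Gaussian sum needs more than the product structure, which only settles one direction at a time. I would first try Baernstein's two-point/Steiner symmetrization for heat kernels on flat tori. Failing that, I would split $\mathcal D$ into regions and use a positivity argument on the cosine series with explicit error bounds, since the off-dominant modes are exponentially small for $y\ge\sqrt3/2$.
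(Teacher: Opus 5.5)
The paper does not prove Theorem~B; it quotes the result from \cite{BFS} as background. Your proposal therefore has to stand on its own, and it does not. Its central step is false: for general $z$ and $s$, the minimizer of $u\mapsto\theta(s;\Lambda_z+u)$ is \emph{not} the deep hole, i.e.\ the circumcenter $c$ of the Delaunay triangle $0,1,z$.

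At large $s$ this follows from a direct gradient computation. The three nearest lattice points $p_1,p_2,p_3$ contribute $2\pi s\,e^{-\pi sR^2}\sum_i(p_i-c)=6\pi s\,e^{-\pi sR^2}(g-c)$ to $\nabla_u\theta$ at $c$, where $g$ is the centroid. For reduced $z$ with $x>0$ the triangle is acute, so every other lattice point lies farther than $R$ from $c$. Hence for large $s$ the circumcenter is not even a critical point unless the triangle is equilateral, i.e.\ $z=e^{i\pi/3}$.

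At $s=1$ the circumcenter fails on a whole arc. For $z=e^{i\varphi}$ with $x=\cos\varphi\in(0,\tfrac12)$, the circumcenter is $(a,b)=\bigl(\frac1{2(1+x)},\frac1{2(1+x)}\bigr)$. Lemma~\ref{lem:centerstrip}, however, shows that for $0<x\le 39/100$ on this arc the unique minimizer is $(1/2,1/2)$, the center of the rhombus. At $z=0.1+i\sqrt{0.99}$ the sum is about $0.839$ at the center and about $0.844$ at the circumcenter. Consequently $F(s;z)=\theta(s;\Lambda_z+u_z)$ is not $\min_u\theta$, and the inner minimizer moves with $(z,s)$ and switches branches. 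The envelope identity $\nabla_zF=\partial_z\theta|_{u=u_z}$ is also unavailable. The step you flagged as the main obstacle is not merely hard: it is false away from the hexagonal and rectangular shapes, which are the only ones where symmetry forces the circumcenter to be critical.

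The fix is to use the order of max and min, which makes that step unnecessary. You only need $\min_u\theta(s;z,u)\le\min_u\theta(s;e^{i\pi/3},u)$ for every $z$. So the inner problem must be solved exactly only at $z=e^{i\pi/3}$, where the deep holes $\pm(1/3,1/3)$ are the minimizers (a result going back to Baernstein for the hexagonal torus). For every other $z$, any test shift $u^*(z)$ gives a valid upper bound.

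The circumcenter is a sensible test shift, but then you must control $z\mapsto\theta(s;\Lambda_z+u^*(z))$. Its $z$-derivative carries the extra term $\nabla_u\theta\cdot\partial_z u^*$. Proving $\theta(s;\Lambda_z+u^*(z))\le\theta(s;e^{i\pi/3},1/3,1/3)$ for all $z$ and all $s>0$ is the real content of the theorem, and your monotonicity bullets state this goal without an argument.

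Separately, the duality $F(s;\cdot)\leftrightarrow F(1/s;\cdot)$ you invoke does not exist. As your own first display shows, Poisson summation turns a shifted sum on $\Lambda$ into an unshifted, character-twisted sum on $\Lambda^*$ at $1/s$, not into a shifted one. For $z=iy$, for instance, $\min_u\theta(s;\Lambda+u)=\vartheta_2(s/y)\vartheta_2(sy)$, while the same quantity at $1/s$ equals $s\,\vartheta_4(sy)\vartheta_4(s/y)$. For $s\ne1$ their ratio depends on $y$. The small-$s$ regime (governed by the shortest dual vectors) and the large-$s$ regime (governed by the covering radius) must therefore be handled separately.
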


In this perspective, since $\max_{(a,b)\in\mathbb R^2}\theta (s; z,a,b)=\theta (s; z)$, Montgomery's theorem can be reformulated as follows:
\begin{thmA'}[=Theorem A] For $s>0$, up to the action by the modular group,
\begin{equation}\nonumber
 \min_{z\in\mathbb{H}}\max_{(a,b)\in\mathbb R^2}\theta (s; z,a,b)\;\;\hbox{is achived at}\;\; e^{i\pi/3}.
\end{equation}
\end{thmA'}

Theorem B has deep applications in Gaussian Gabor systems in communications, the heat kernel problem, and the Landau constant; see B\'etermin, Faulhuber, and Steinerberger \cite{BFS} for a broad overview.

We now focus the another side connected to Theorem B.
Using the notations of Mueller-Ho, the Abrikosov energy is
\begin{equation}\nonumber
 J(z;a,b):=\theta (1; z,a,b).
\end{equation}
Throughout the paper, we write the two-component energy of the Bose gas through the interaction strength $\alpha$ as
\begin{equation}\nonumber
 E_\alpha(z;a,b):=\theta(1;z)+\alpha J(z;a,b),\;\; \alpha\in[0,1].
\end{equation}

Following Mueller and Ho, define
\begin{equation*}
 J(z;a,b):=\theta(1;z,a,b)
 =\sum_{m,n\in\mathbb Z}
 e^{-\pi\bigl(m^2y+\frac{(mx-n)^2}{y}\bigr)}
 \cos\bigl(2\pi(ma+nb)\bigr).
\end{equation*}
The two-component Bose-Einstein energy with interaction strength $\alpha$ is
\begin{equation*}
 E_\alpha(z;a,b):=\theta(1;z)+\alpha J(z;a,b),\;\; \alpha\in[0,1].
\end{equation*}

The problem of finding optimal lattice shapes arises in many physical models. Besides those examples we mentioned above, another example is the so-called vortices in Bose-Einstein condensates.
 Vortices in Bose-Einstein condensates are also called topological
defects, correspond to a zero of the order parameter with a circulation
of the phase. When they get numerous, these vortices arrange themselves on a lattice. In fact, in rotating Bose-Einstein condensates (BEC), vortices were first observed in two-component BEC's  (Matthews et al. \cite{Mat1999}): it is observed experimentally that the shape of the lattice can be either hexagonal or square depending on the rotational velocity of the condensate. Since then, following the pioneering work of Mueller-Ho \cite{Mue2002}, many authors have investigated the lattice shape in two component BEC's and for instance  Kasamatsu et al. \cite{KTU1,KTU2};  related works include Keeli--Oktel \cite{KO} who numerically calculate the elastic coefficients of the lattice. In Kuokanportti et al. \cite{Kuop}, the authors investigate the case of different masses and attractive interactions.

The ground state of a two-component condensate is well described by a Gross-Pitaevskii energy depending on the wave functions of each component which are coupled by an interaction term.
The construction of the Bose-Einstein condensates with large number of vortices was deduced in Ho \cite{Ho2001} (one-component case) and Mueller-Ho \cite{Mue2002} (two-component case)
Abrikosov~\cite{Abr} described vortex lattices in type-II
superconductors. Matthews, Anderson, Haljan, Hall, Wieman, and
Cornell~\cite{Mat1999} created vortices experimentally in
two-component Bose--Einstein condensates.
Ho~\cite{Ho2001} studied the many-vortex quantum Hall regime, and
Mueller and Ho~\cite{Mue2002} derived the two-component lattice energy.
Kasamatsu, Tsubota, and Ueda~\cite{Kas2003} studied symmetry changes
through Gross--Pitaevskii simulations; Ke\c{c}eli and Oktel~\cite{Kea2006}
analyzed Tkachenko modes, and Aftalion, Mason, and Wei~\cite{AMW}
derived a Thomas--Fermi point energy. See Kasamatsu, Tsubota, and
Ueda~\cite{Kas2005} for a review.
Kuopanportti, Huhtam\"aki, and M\"ott\"onen~\cite{Kuop} found further
lattice geometries for unequal masses and attractive intercomponent
coupling.

We completely classify the two-component vortex lattice shapes conjectured by Mueller-Ho \cite{Mue2002}.
We state it in a purely mathematical form, fostering the structure inside, as follows:
\begin{theorem}[Exact solutions to sum of two energy functionals; complete phase transitions and displacements]
\label{thm:classification} Up to the action by the modular group,
there are three thresholds $0<\alpha_a<\alpha_b<\alpha_c<1$ such that
\begin{equation}\aligned\nonumber
\Minima_{z\in\mathbb H,\,(a,b)\in\mathbb R^2}\Big(\theta(1;z)+\alpha J(z;a,b)\Big)=
\begin{cases}
\;\big(e^{i{\pi}/{3}};1/3,1/3\big), &\hbox{if}\;\; \alpha\in[0,\alpha_a),\\
\;\big(e^{i{\pi}/{3}};1/3,1/3\big)\;\hbox{or}\;\big(e^{i{\varphi_{\alpha_a}}};1/2,1/2\big), &\hbox{if}\;\; \alpha=\alpha_a,\\
\;\big(e^{i{\varphi_\alpha}};1/2,1/2\big),\;\varphi_\alpha\in(\varphi_{\alpha_a},\frac{\pi}{2}),\;\; &\hbox{if}\;\; \alpha\in(\alpha_a,\alpha_b),\\
\;\big(i;1/2,1/2\big),\; &\hbox{if}\;\; \alpha\in[\alpha_b,\alpha_c],\\
\;\big(iy_\alpha;1/2,1/2\big),\; y_\alpha\in(1,\sqrt3],\;\;y_1=\sqrt3,\;\;&\hbox{if}\;\; \alpha\in(\alpha_c,1],
\end{cases}
\endaligned\end{equation}
More precisely, all minimizing configurations are classified as follows:
\begin{enumerate}
\item If $\alpha\in(0,\alpha_a)$, the lattice is triangular and
      $(a,b)=(1/3,1/3)$.
\item If $\alpha=\alpha_a$, the transition
changes both the displacement and the lattice shape discontinuously. It admits two different type minimizers. They are triangular lattice with $(a,b)=(1/3,1/3)$
       and the centered rhombic configuration
      $(e^{i\varphi_{\alpha_a}};1/2,1/2)$.
\item If $\alpha\in(\alpha_a,\alpha_b)$, the vortex is rhombic changing with $\alpha$ and the relative position is $(a,b)=(1/2,1/2)$ keeping fixed.

\item If $\alpha\in[\alpha_b,\alpha_c]$, the vortex is square and the relative position is $(a,b)=(1/2,1/2)$, both are unchanged with the parameter.

\item If $\alpha\in(\alpha_c,1]$, the vortex is rectangular changing with the parameter $\alpha$ and the relative position is $(a,b)=(1/2,1/2)$. Furthermore $y_\alpha\in(1,\sqrt3]$ and $y_1=\sqrt3$.
\end{enumerate}

Here $\alpha_b$ and $\alpha_c$ are given by
\begin{equation}\label{eq:centered-thresholds}
 \alpha_b=-\frac{\theta_{xx}(1;i)}
                   {J_{xx}(i;1/2,1/2)},\qquad
 \alpha_c=-\frac{\theta_{yy}(1;i)}
                   {J_{yy}(i;1/2,1/2)}.
\end{equation}
The pair $(\alpha_a,\varphi_{\alpha_a})$ is determined uniquely by the minimization identity:
\[
\begin{aligned}
E_{\alpha}(e^{i\pi/3};1/3,1/3)
 =\min_{\varphi\in[\pi/3,\pi/2]}E_{\alpha}(e^{i\varphi};1/2,1/2),\;\;\alpha\in(0,3/{10}).
\end{aligned}
\]
$\varphi_\alpha\in(\pi/3,\pi/2)$
is the unique solution of
\begin{equation}\label{eq:rhombic-branch}
 \frac{d}{d\varphi}E_\alpha(e^{i\varphi};1/2,1/2)=0
 \;\;\hbox{for}\;\;\alpha\in(0,\alpha_b).
\end{equation}
$y_\alpha\in(1,\sqrt3]$ is
the unique solution of
\begin{equation}\label{eq:rectangular-branch}
  \frac{d}{d y}E_\alpha(iy;1/2,1/2)=0,\;\;\hbox{for}\;\;\alpha\in(\alpha_b,1).
\end{equation}

Numerically,
\begin{equation}\nonumber
\begin{aligned}
 \alpha_a=&0.172356\cdots,\;
 \alpha_b=0.373215\cdots,\;
 \alpha_c=0.925649\cdots,\;
 \varphi_{\alpha_a}=1.185955\cdots.
\end{aligned}
\end{equation}

\end{theorem}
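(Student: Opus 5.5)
The minimization is split into two stages: first over the shift $(a,b)$ with $z$ fixed, then over the shape $z$ on the fundamental domain. Only the second term of $E_\alpha$ depends on $(a,b)$, so we need
\[
\min_{(a,b)}J(z;a,b).
\]
By Poisson summation, $J(z;a,b)$ is a positive multiple of $\theta(1;\Lambda^*+u)$, the Gaussian sum over a shifted (dual) lattice. Its minimum over $u$ is attained at a "deep hole" of the lattice.

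The first step is to show that for $z$ in the fundamental domain
\[
\mathcal D=\Big\{|z|\ge1,\ 0\le x\le \tfrac12\Big\},
\]
the minimizing shift is always one of two explicit points. Either it is the circumcenter-type point $(1/3,1/3)$, which is relevant only near $z=e^{i\pi/3}$, or it is the half-period $(1/2,1/2)$. I would argue via the Jacobi triple product: $J$ factors through $|\vartheta_1|$-type products in each variable, so for fixed $m$ the one-dimensional sums are minimized at half-integers. This leaves a competition between the two critical families. I expect that along the rhombic and rectangular boundaries $\partial\mathcal D$, where the possible minimizers live, the point $(1/2,1/2)$ wins except at the hexagonal point. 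In practice I would reduce to comparing
\[
f_1(z)=E_\alpha(z;1/3,1/3)\quad\text{and}\quad f_2(z)=E_\alpha(z;1/2,1/2).
\]

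The second step is to minimize $f_2$ over $\mathcal D$. Here I would use the method familiar from the Luo–Wei papers, working in the $(x,y)$ variables:
\begin{itemize}
\item Show $\partial_y f_2$ has a sign in the interior, or that $\partial_x f_2<0$ for $x\in(0,1/2)$. Either way the minimum is pushed to the boundary arc $|z|=1$ or to the imaginary axis $x=0$.
\item On the arc $z=e^{i\varphi}$ with $\varphi\in[\pi/3,\pi/2]$, use convexity-type estimates on $\frac{d}{d\varphi}f_2$ to obtain a unique critical point $\varphi_\alpha$ for $\alpha<\alpha_b$. This point hits $\pi/2$ exactly when the second derivative at $i$ changes sign, which is the formula $\alpha_b=-\theta_{xx}/J_{xx}$.
\item On $x=0$, do the same in $y$. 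The critical point $y_\alpha$ leaves $y=1$ at $\alpha_c=-\theta_{yy}/J_{yy}$.
\item At $\alpha=1$, check directly that $y_1=\sqrt3$. This is the symmetry $E_1(iy;1/2,1/2)=\theta(1;\tfrac{1+iy}{2}\cdot)$-type: for $\alpha=1$ the sum is a theta function of a sublattice, so Montgomery's Theorem A applied to the rescaled lattice gives the hexagonal sublattice, i.e. $y=\sqrt3$.
\end{itemize}
The sign properties of derivatives are obtained by truncating the exponentially convergent series and bounding the tails by explicit estimates, in the standard way.

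The third step handles $f_1$. Theorem A together with monotonicity of $J(\cdot;1/3,1/3)$ shows that the minimum of $f_1$ is at $e^{i\pi/3}$ for small $\alpha$. The function
\[
g(\alpha)=f_1(e^{i\pi/3})-\min_\varphi f_2(e^{i\varphi})
\]
is then strictly monotone in $\alpha$: both terms are affine in $\alpha$ inside a minimum, and the $J$-values have a definite ordering. It is positive at $0$ and negative by $\alpha=3/10$, so it has a unique zero $\alpha_a$, which yields the discontinuous transition.

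The main obstacle I expect is the first step combined with ruling out interior minima of $f_2$. That is, I need a uniform sign of $\partial_x$ and $\partial_y$ of the combined functional across all of $\mathcal D$ and all $\alpha$, and the two terms have opposing monotonicity. My first attempt would be to write
\[
\partial_x E_\alpha=\partial_x\theta+\alpha\,\partial_x J
\]
and bound the ratio $-\partial_x\theta/\partial_x J$ monotonically in $y$. This reduces the two-parameter problem to one-variable inequalities that can be checked with rigorous numerics.
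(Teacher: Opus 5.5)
\textbf{The gap is in your first step.} It is false that for every $z\in\mathcal D$ the minimizing displacement of $J(z;\cdot)$ is $(1/3,1/3)$ or $(1/2,1/2)$.

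\emph{Why the dichotomy fails.} The point $(1/2,1/2)$ is always critical, by $(a,b)\mapsto(1-a,1-b)$. The point $(1/3,1/3)$ is forced to be critical only by the order-three symmetry $(z;a,b)\mapsto(1-1/z;1-a-b,a)$, which fixes it only at $z=e^{i\pi/3}$. For nearby $z\ne e^{i\pi/3}$ the deep hole moves with the shape; the paper's localization lemma places it at $a\approx\frac13-(x-\frac12)-\frac{29}{50}(y-\frac{\sqrt3}{2})$, $b\approx\frac13+\frac{29}{25}(y-\frac{\sqrt3}{2})$. Since $J(e^{i\pi/3};1/2,1/2)>J(e^{i\pi/3};1/3,1/3)$, continuity also rules out $(1/2,1/2)$ there, including on the arc near $e^{i\pi/3}$. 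So near the hexagonal point the minimizer is neither of your two points.

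\emph{Why the triple-product argument cannot repair it.} In $J(x+iy;a,b)=\sqrt y\sum_n e^{-\pi y(n+b)^2}\vartheta(y;a+x(n+b))$, each row wants $a+x(n+b)\equiv\frac12$. These conditions are incompatible across rows unless $x=0$.

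\emph{Consequence.} $\min_z\min\{f_1(z),f_2(z)\}$ is only an upper bound for the true minimum. The lower bound is exactly what is missing: that no simultaneous perturbation of shape and displacement near $(e^{i\pi/3};1/3,1/3)$ lowers the energy, and that away from the hexagonal point nothing beats the centered minimum. Minimizing $f_1$, even correctly, says nothing about other displacements.

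\emph{How the paper supplies the lower bound.} It uses three pieces.
\begin{enumerate}
\item A displacement-free lower bound, obtained by minimizing exactly over the four dominant Fourier pairs and discarding the rest via $1+\alpha\cos(\cdot)\ge0$ (Lemma~\ref{lem:four}). This is compared box by box with fixed centered competitors.
\item A positive-definite quadratic form in $\sqrt{1-\cos u},\sqrt{1-\cos v}$, certifying that $(1/2,1/2)$ is the unique displacement minimizer on the strip $x\le39/100$ (Lemma~\ref{lem:centerstrip}).
\item In the cap near $e^{i\pi/3}$, localization of the minimizing displacement near the moving deep hole. This is followed by positive definiteness of the Hessian of $E_\alpha$ in all four variables $(x,y,a,b)$ on a convex prism containing the stationary point $(\frac12,\frac{\sqrt3}{2},\frac13,\frac13)$ (Lemma~\ref{lem:hex-convexity}).
\end{enumerate}
Your row-wise idea does appear for $\alpha\ge8/15$, but in a different role. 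There the paper compares quantitatively with $E_\alpha(iy;1/2,1/2)$ at the same height, so that the decrease of $\theta$ under shear is dominated by a displacement penalty proportional to $\sin^2(\pi x)$. The fixed-shape minimizer is never identified.

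\textbf{Your third step is also wrong as stated, though repairable.}

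\emph{Sign and monotonicity errors.} With $g(\alpha)=E_\alpha(e^{i\pi/3};1/3,1/3)-\min_\varphi f_2(e^{i\varphi})$, you have $g(0)=0$, since both terms equal $\theta(1;e^{i\pi/3})$. The signs are the reverse of yours: $g<0$ on $(0,\alpha_a)$ and $g>0$ afterwards. Nor is $g$ monotone. Its right derivative at $0$ is $J(e^{i\pi/3};1/3,1/3)-J(e^{i\pi/3};1/2,1/2)<0$. At $\alpha_a$ the corresponding difference $J(e^{i\pi/3};1/3,1/3)-J(e^{i\varphi_{\alpha_a}};1/2,1/2)$ is positive; it is the denominator of the paper's formula for $\alpha_a$.

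\emph{What actually works.} $g$ is convex, because the centered minimum is an infimum of affine functions of $\alpha$. Convexity, $g(0)=0$, and certified signs at two rational couplings give the unique crossing; the paper does this at $1723/10000$ and $7/40$.

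\emph{Two further requirements.} First, for $g$ to locate the phase boundary, $\min f_2$ must be the global centered minimum over $\mathbb H$. The paper obtains it, together with $\alpha_b$, $\alpha_c$ and $y_1=\sqrt3$, from the centered identity $E_\alpha(z;1/2,1/2)=(1-\alpha)\theta(1;z)+2\alpha\theta(2;\frac{z+1}{2})$ and the earlier Luo--Wei classification, not from new derivative-sign estimates. Your $\alpha=1$ observation is a special case of this identity. Second, ``for small $\alpha$'' needs a mechanism covering all of $(0,\alpha_a)$. The paper interpolates affinely in $\alpha$ between Montgomery's theorem at $\alpha=0$ and the full comparison at $\alpha=1723/10000$.
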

Theorem \ref{thm:classification} provides a new pattern in lattice minimization problems both in lattice shapes and relative displacements.

For $0\leq\alpha\leq1$, we compare the triangular deep-hole energy
with the centered minimum,
\begin{equation}\label{eq:competing-energies}
 E_\alpha(e^{i\pi/3};1/3,1/3)
 \quad\text{and}\quad
 \min_{\tau\in\mathbb H}E_\alpha(\tau;1/2,1/2).
\end{equation}
The two deep holes of the triangular lattice correspond to
$\pm(1/3,1/3)$. The centered identity \eqref{eq:centered-identity},
proved in Section~\ref{sec:preliminaries}, identifies the centered
minimum with the minimum of the sum of theta functions classified by
our previous work~\cite[Theorem~2.1]{LW}.

\subsection{Outline of the proof and organization}

We write $D^2f$ for the real Hessian. For a real symmetric
$d\times d$ matrix $M$, $M\succ0$ means
$v^{\mathsf T}Mv>0$ for every $v\in\mathbb R^d\setminus\{0\}$.
We write $M\ge cI_d$ when
$v^{\mathsf T}Mv\ge c\|v\|_2^2$ for all $v\in\mathbb R^d$,
where $I_d$ is the identity matrix and
$\|v\|_2^2=\sum_{j=1}^d v_j^2$.

The proof combines a minimum principle in $(x,y,a,b)$ with modular
reduction (Proposition~\ref{Prop21}) and compactness
(Lemma~\ref{lem:cusp}). A minimizing representative satisfies
\[
 0\leq x\leq\frac12,\qquad x^2+y^2\geq1,\qquad
 \frac{\sqrt3}{2}\leq y<2,\qquad (a,b)\in\mathbb R^2/\mathbb Z^2.
\]

The principal comparison near the first transition is
\[
 \min_{z\in\mathbb H,\,(a,b)\in\mathbb R^2}E_\alpha(z;a,b)
 =\min\left\{E_\alpha(e^{i\pi/3};1/3,1/3),\,
       \min_{\tau\in\mathbb H}E_\alpha(\tau;1/2,1/2)\right\},
 \qquad \frac{1723}{10000}\leq\alpha\leq\frac7{40}.
\]
On the reduced strip $x\leq39/100$, we have
$y\geq\sqrt{1-(39/100)^2}>23/25$, and
Lemma~\ref{lem:centerstrip} gives
\[
 E_\alpha(z;a,b)\geq E_\alpha(z;1/2,1/2)
 \geq\min_{\tau\in\mathbb H}E_\alpha(\tau;1/2,1/2),
\]
with equality in the first inequality only at the centered displacement.
Near the triangular lattice, Lemma~\ref{lem:hex-shift-localization}
places a minimizing configuration, after displacement inversion and
integer translations, in the convex prism \eqref{eq:hex-affine-prism}.
Writing
\[
 p=(x,y,a,b),\qquad
 p_0=\left(\frac12,\frac{\sqrt3}{2},\frac13,\frac13\right),
\]
we have $\nabla E_\alpha(p_0)=0$, and
Lemma~\ref{lem:hex-convexity} gives $D^2E_\alpha\succ0$ on the prism
for $4/25\leq\alpha\leq7/40$. Thus, if $p\ne p_0$,
\[
 E_\alpha(p)-E_\alpha(p_0)
 =\int_0^1(1-t)(p-p_0)^{\mathsf T}D^2E_\alpha(p_0+t(p-p_0))(p-p_0)\,dt>0,
\]
contradicting minimality, including at the shape boundary.
On the two regions in Proposition~\ref{prop:bridge},
\[
 E_\alpha(z;a,b)>
 E_\alpha\left(\frac{3+i\sqrt{55}}8;\frac12,\frac12\right)
 \geq\min_{\tau\in\mathbb H}E_\alpha(\tau;1/2,1/2).
\]
These comparisons give Lemma~\ref{lem:first-transition} and its
equality cases.

The function
\[
 \alpha\longmapsto
 \min_{\tau\in\mathbb H}E_\alpha(\tau;1/2,1/2)
       -E_\alpha(e^{i\pi/3};1/3,1/3)
\]
is concave on $[0,1]$ and vanishes at $\alpha=0$.
Together with \eqref{eq:crossing-endpoint-signs}, concavity gives
\eqref{eq:crossing-signs} and the unique crossing
$\alpha_a\in(\frac{1723}{10000},7/40)$. Energy equality and stationarity give
\eqref{eq:crossing-system}. Thus
Lemma~\ref{lem:first-transition} at $\frac{1723}{10000}$ yields
\[
 E_{1723/10000}(z;a,b)\geq E_{1723/10000}(e^{i\pi/3};1/3,1/3),
\]
with equality precisely at the triangular deep holes, modulo the
joint symmetries. For $0<\alpha<\frac{1723}{10000}$, affine dependence on
$\alpha$ gives
\[
\begin{aligned}
 &E_\alpha(z;a,b)-E_\alpha(e^{i\pi/3};1/3,1/3)\\
 &\quad=\left(1-\frac{10000\alpha}{1723}\right)
       \bigl[\theta(1;z)-\theta(1;e^{i\pi/3})\bigr]\\
 &\qquad+\frac{10000\alpha}{1723}
       \bigl[E_{1723/10000}(z;a,b)-E_{1723/10000}(e^{i\pi/3};1/3,1/3)\bigr]
 \geq0.
\end{aligned}
\]
Both brackets are nonnegative by Theorem A and the comparison at
$\frac{1723}{10000}$; equality forces the triangular deep holes.
At $\alpha=0$, the energy is independent of displacement, and
Montgomery's theorem determines the lattice shape.

For the remaining couplings, Lemma~\ref{lem:first-transition},
Theorem~\ref{thm:centered}, and
Theorem~\ref{thm:large-alpha-unrestricted} cover, respectively,
\[
 [\frac{1723}{10000},7/40]\cup[7/40,8/15]\cup[8/15,1]=[\frac{1723}{10000},1].
\]
Together with \eqref{eq:crossing-signs}, they give
\[
 \min_{z\in\mathbb H,\,(a,b)\in\mathbb R^2}E_\alpha(z;a,b)
 =\min_{\tau\in\mathbb H}E_\alpha(\tau;1/2,1/2)
 \qquad(\alpha_a\leq\alpha\leq1).
\]
Finally, the centered identity \eqref{eq:centered-identity}
\[
 E_\alpha(z;1/2,1/2)
 =(1-\alpha)\theta(1;z)
  +2\alpha\theta\left(2;\frac{z+1}{2}\right)
\]
reduces the centered problem to the classification of our previous work~\cite[Theorem~2.1]{LW}, which gives
the rhombic, square, and rectangular phases and their transition
thresholds.

Section~\ref{sec:preliminaries} establishes the joint modular
symmetries, compactness, and common Fourier estimates.
Section~\ref{sec:centered} gives the centered strip and the bridge
comparisons. Section~\ref{sec:triangular} proves the triangular
comparison by displacement localization and integration of the joint
Hessian. Section~\ref{sec:rectangles} treats the rectangle comparison,
locates the first transition, and completes the proof of
Theorem~\ref{thm:classification}. Appendix~\ref{app:certificates}
contains the finite-sum estimates and their error bounds.

\section{Preliminaries: modular invariance and reduction}\label{sec:preliminaries}

In this section, we deduce the symmetries of the functional $E_\alpha(z;a,b)$ with respect to $z$ and $(a,b)$, and then introduce the reduced fundamental domain.

We recall the fundamental-domain definition used by Apostol~\cite{Apo1976} and Evans~\cite{Eva1973}.
\begin{definition} [Evans~{\cite[p.~108]{Eva1973}}]\label{Def}
The fundamental domain associated to group $G$ is a connected domain $\mathcal{D}$ that satisfies:
\begin{itemize}
  \item [(1)] For any $z\in\mathbb{H}$, there exists an element $\pi\in G$ such that $\pi(z)\in\overline{\mathcal{D}}$;
  \item [(2)] Suppose $z_1,z_2\in\mathcal{D}$ and $\pi(z_1)=z_2$ for some $\pi\in G$, then $z_1=z_2$ and $\pi=\pm Id$.
\end{itemize}
\end{definition}

The group $\mathcal{G}$ is defined by
\begin{equation}\aligned\label{GroupG1}
\mathcal{G}: \hbox{the group generated by} \;\;\tau\mapsto -\frac{1}{\tau},\;\; \tau\mapsto \tau+1,\;\;\tau\mapsto -\overline{\tau}.
\endaligned\end{equation}

A fundamental domain for $\mathcal G$ is
\begin{equation}\aligned\label{Fd3}
\mathcal{D}_{\mathcal{G}}:=\{
z\in\mathbb{H}: |z|>1,\; 0<x<\frac{1}{2}
\}.
\endaligned\end{equation}

We then introduce the modular properties of $E_\alpha(z;a,b)$.
\begin{proposition}[Modular invariance and reduction]
\label{Prop21}
Let $\alpha\in\R$, $z\in\mathbb H$, and
$(a,b)\in\mathbb R^2$. Then
\begin{equation}\label{Eq21}
\begin{aligned}
 &E_\alpha(z+1;a-b,b)=E_\alpha(z;a,b),\\
 &E_\alpha(-1/z;-b,a)=E_\alpha(z;a,b),\\
 &E_\alpha(-\bar z;a,-b)=E_\alpha(z;a,b).
\end{aligned}
\end{equation}
and
\begin{equation}\label{Eq22}\begin{aligned}
 &E_\alpha(z;a+1,b)=E_\alpha(z;a,b),\\
  &E_\alpha(z;a,b+1)=E_\alpha(z;a,b),\\
  &E_\alpha(z;1-a,1-b)=E_\alpha(z;a,b)
\end{aligned}\end{equation}

Whenever a minimum is attained,
\begin{equation*}\begin{aligned}
\min_{z\in\mathbb{H}, (a,b)\in\mathbb{R}^2} E_\alpha(z;a,b)
=\min_{z\in\overline{\mathcal{D}_{\mathcal{G}}}, (a,b)\in\mathbb{R}^2} E_\alpha(z;a,b)
=\min_{z\in\overline{\mathcal{D}_{\mathcal{G}}}, (a,b)\in[0,1]^2} E_\alpha(z;a,b).
\end{aligned}\end{equation*}

\end{proposition}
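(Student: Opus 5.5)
The plan is to verify the six identities directly from the series. Then I will use the fact that each generator of $\mathcal G$ comes paired with an invertible linear change of the displacement $(a,b)$, and deduce the reduction statement from this pairing. Throughout I write
\[
Q_z(m,n)=m^2y+\frac{(mx-n)^2}{y}=\frac{|mz-n|^2}{\Im z},
\]
so that $J(z;a,b)=\sum_{m,n}e^{-\pi Q_z(m,n)}\cos(2\pi(ma+nb))$ and $\theta(1;z)=J(z;0,0)$. All series converge absolutely, locally uniformly in $z$, because $Q_z$ is a positive definite quadratic form. Hence any bijective reindexing of $\mathbb Z^2$ is legitimate. Since $\theta(1;z)$ is the case $a=b=0$ of $J$, it suffices to prove each identity in \eqref{Eq21} for $J$; the same computation at $(a,b)=(0,0)$ then gives it for $\theta(1;\cdot)$, and so for $E_\alpha$ for every $\alpha\in\R$.

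\emph{Step 1: the identities \eqref{Eq21}.} For $z\mapsto z+1$ we have $Q_{z+1}(m,n)=|mz-(n-m)|^2/y=Q_z(m,n-m)$. Substituting $n=n'+m$ turns the phase $m(a-b)+nb$ into $ma+n'b$, which gives the first identity.

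For $z\mapsto-1/z$ we use $\Im(-1/z)=y/|z|^2$. Then $|-m/z-n|^2=|m+nz|^2/|z|^2$, so $Q_{-1/z}(m,n)=|nz+m|^2/y=Q_z(n,-m)$. Setting $(m',n')=(n,-m)$ turns the phase $m(-b)+na$ into $m'a+n'b$.

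For $z\mapsto-\bar z$ we have $|m(-\bar z)-n|^2=|mz+n|^2$, so $Q_{-\bar z}(m,n)=Q_z(m,-n)$. With $n'=-n$ the phase $ma+n(-b)$ becomes $ma+n'b$.

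\emph{Step 2: the identities \eqref{Eq22}.} The first two follow from the $1$-periodicity of $\cos(2\pi(ma+nb))$ in $a$ and in $b$, since $m,n\in\mathbb Z$. For the third, $\cos(2\pi(m(1-a)+n(1-b)))=\cos(2\pi(ma+nb))$ because cosine is even and $2\pi(m+n)$ is a multiple of $2\pi$.

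\emph{Step 3: the reduction.} Each generator $g$ of $\mathcal G$ has been paired with a linear map $L_g\in GL_2(\mathbb Z)$ acting on $(a,b)$:
\[
(a,b)\mapsto(a-b,b),\qquad (a,b)\mapsto(-b,a),\qquad (a,b)\mapsto(a,-b),
\]
and in each case $E_\alpha(gz;L_g(a,b))=E_\alpha(z;a,b)$. Composing these relations, every element of $\mathcal G$, written as a word in the generators and their inverses, admits some invertible linear $L$ with the same property. The inverses are covered by applying each identity at the point $(gz,L_g(a,b))$.

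Now fix $(z,a,b)$. By Definition~\ref{Def}(1) there is $g\in\mathcal G$ with $gz\in\overline{\mathcal D_{\mathcal G}}$, and then $E_\alpha(z;a,b)=E_\alpha(gz;L(a,b))$ with $L(a,b)\in\mathbb R^2$. Hence the infimum over $\mathbb H\times\mathbb R^2$ equals the infimum over $\overline{\mathcal D_{\mathcal G}}\times\mathbb R^2$. The first two identities of \eqref{Eq22} further reduce $(a,b)$ modulo $\mathbb Z^2$ into $[0,1]^2$. Whenever one of these minima is attained, the same value is attained on the smaller set, which gives the stated chain of equalities.

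I expect the only delicate point to be the bookkeeping in Step 3: one must check that the induced displacement maps compose consistently, or at least that each is a bijection of $\mathbb R^2$, so that no displacement is lost. Since only surjectivity onto $\mathbb R^2$ is needed, the invertibility of each $L_g$ settles this. The possible non-uniqueness of $g$, for instance for $z$ on the boundary of $\mathcal D_{\mathcal G}$, does not matter here.
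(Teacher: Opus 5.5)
Your proposal is correct and follows essentially the same route as the paper: you reindex the lattice sum for each generator ($n\mapsto n-m$, $(m,n)\mapsto(n,-m)$, $n\mapsto -n$), verify \eqref{Eq22} term by term, and reduce via the fundamental-domain property. Your explicit bookkeeping of compositions and inverses through the paired maps $L_g\in GL_2(\mathbb Z)$ simply spells out what the paper leaves implicit.
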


\begin{proof}
Write $z=x+iy$. The series
\[
 E_\alpha(z;a,b)=\sum_{m,n\in\mathbb Z}
 e^{-\pi |mz-n|^2/y}
 \bigl[1+\alpha\cos\bigl(2\pi(ma+nb)\bigr)\bigr]
\]
converges absolutely, since $|1+\alpha\cos t|\leq1+|\alpha|$.
Using
\[
 \Im(-1/z)=\frac{y}{|z|^2},\qquad
 \frac{|m(-1/z)-n|^2}{\Im(-1/z)}=\frac{|nz+m|^2}{y},\qquad
 |m(-\bar z)-n|^2=|mz+n|^2,
\]
we obtain
\[
\begin{aligned}
 E_\alpha(z+1;a-b,b)
 &=\sum_{m,n\in\mathbb Z}e^{-\pi |mz-(n-m)|^2/y}
   \bigl[1+\alpha\cos\bigl(2\pi(ma+(n-m)b)\bigr)\bigr]\\
 &\overset{k=n-m}{=}E_\alpha(z;a,b),\\[4pt]
 E_\alpha(-1/z;-b,a)
 &=\sum_{m,n\in\mathbb Z}e^{-\pi |nz+m|^2/y}
   \bigl[1+\alpha\cos\bigl(2\pi(na-mb)\bigr)\bigr]\\
 &\overset{(p,q)=(n,-m)}{=}E_\alpha(z;a,b),\\[4pt]
 E_\alpha(-\bar z;a,-b)
 &=\sum_{m,n\in\mathbb Z}e^{-\pi |mz+n|^2/y}
   \bigl[1+\alpha\cos\bigl(2\pi(ma-nb)\bigr)\bigr]\\
 &\overset{k=-n}{=}E_\alpha(z;a,b).
\end{aligned}
\]
The three changes of indices are bijections of $\mathbb Z^2$;
this proves \eqref{Eq21}. They also give the corresponding
invariances of $\theta(s;z)$ for every $s>0$.

For $p,q\in\mathbb Z$, we have
\[
\begin{aligned}
 \cos\bigl(2\pi(m(a+p)+n(b+q))\bigr)
 &=\cos\bigl(2\pi(ma+nb)\bigr),\\
 \cos\bigl(2\pi(m(1-a)+n(1-b))\bigr)
 &=\cos\bigl(2\pi(m+n)-2\pi(ma+nb)\bigr)\\
 &=\cos\bigl(2\pi(ma+nb)\bigr).
\end{aligned}
\]
These identities prove \eqref{Eq22} term by term.

The fundamental-domain property of \eqref{Fd3} and
\eqref{Eq21}--\eqref{Eq22} imply
\[
\begin{gathered}
 \forall\,(z;a,b)\in\mathbb H\times\mathbb R^2,\quad
 \exists\,(z';a',b')\in
 \overline{\mathcal D_{\mathcal G}}\times[0,1]^2:\\
 E_\alpha(z';a',b')=E_\alpha(z;a,b).
\end{gathered}
\]
Hence the three infima agree for every real $\alpha$, and the
minima agree whenever attained. For $0\leq\alpha\leq1$,
attainment follows from Lemma~\ref{lem:cusp}.
\end{proof}

We use the Jacobi theta notation of Luo and Wei~\cite[(1.13) and (4.4)]{LW}:
\[
\begin{aligned}
 \vartheta_2(y)&=\sum_{n\in\mathbb Z}e^{-\pi y(n-1/2)^2},\qquad
 \vartheta_3(y)=\sum_{n\in\mathbb Z}e^{-\pi yn^2},\\
 \vartheta_4(y)&=\sum_{n\in\mathbb Z}(-1)^ne^{-\pi yn^2},\qquad
 \vartheta(y;t)=\sum_{n\in\mathbb Z}e^{-\pi yn^2+2\pi int},
\end{aligned}
\]
where $y>0$ and $t\in\mathbb R$.

We next record the analytic properties needed for differentiation
and for comparisons at nonpositive coupling.

\begin{lemma}[Poisson summation]
\label{lem:analytic-properties}

\begin{equation}\label{eq:positive-translated}
 J(z;a,b)=\sum_{p,q\in\mathbb Z}
 \exp\!\left[-\pi\left(
 \frac{(p+a+x(q+b))^2}{y}+y(q+b)^2\right)\right],
\end{equation}
and
\begin{equation}\label{eq:translated-bounds}
 0<J(z;a,b)\leq\theta(1;z).
\end{equation}
Equality in the upper bound holds if and only if
$(a,b)\in\mathbb Z^2$.
\end{lemma}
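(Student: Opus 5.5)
We apply Poisson summation in the summation index $n$ and leave $m$ fixed. Write $z=x+iy$. For fixed $m$, the inner sum is
\[
 S_m:=\sum_{n\in\mathbb Z}e^{-\pi (mx-n)^2/y}\,e^{2\pi i(ma+nb)}.
\]
Only the real part is needed, because $J$ is the real part of the full double sum. The double sum is symmetric under $(m,n)\mapsto(-m,-n)$, so it is already real. The function $f(t)=e^{-\pi(mx-t)^2/y}e^{2\pi i tb}$ has Fourier transform
\[
 \hat f(\xi)=\sqrt{y}\,e^{-\pi y(\xi-b)^2}e^{-2\pi i mx(\xi-b)}.
\]
Poisson summation then gives
\[
 S_m=\sqrt y\sum_{p\in\mathbb Z}e^{-\pi y(p-b)^2}e^{2\pi i(ma-mx(p-b))}.
\]
We next sum over $m$ and apply Poisson summation a second time, now to the Gaussian factor $e^{-\pi m^2 y}$ multiplied by the phase $e^{2\pi i m(a-x(p-b))}$. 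This gives
\[
 \sum_m e^{-\pi y m^2}e^{2\pi i m c}=\frac1{\sqrt y}\sum_{q}e^{-\pi (q+c)^2/y},\qquad c=a-x(p-b).
\]
The factors $\sqrt y$ cancel. After relabelling $p\mapsto -q'$ and renaming the remaining index, $J$ becomes a sum of Gaussians of the form $\exp[-\pi((p+a+x(q+b))^2/y+y(q+b)^2)]$. This is exactly \eqref{eq:positive-translated}, i.e. $J(z;a,b)=\theta(1;\Lambda+u)$ written in lattice coordinates. The only care needed here is with signs: the summation indices run over all of $\mathbb Z$, so we are free to flip $q\mapsto -q$ and $p\mapsto-p$. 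We can also cross-check the formula by taking $a=b=0$, where it must reduce to the formula for $\theta(1;z)$.

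Positivity $J>0$ is immediate from \eqref{eq:positive-translated}, because every term is a positive exponential. For the upper bound we use the original cosine series: each term satisfies $e^{-\pi(\cdot)}\cos(\cdot)\le e^{-\pi(\cdot)}$, and summing gives $J(z;a,b)\le\theta(1;z)$. Equality in this bound forces $\cos(2\pi(ma+nb))=1$ for every $(m,n)$, since every Gaussian weight is strictly positive. Taking $(m,n)=(1,0)$ gives $a\in\mathbb Z$, and taking $(m,n)=(0,1)$ gives $b\in\mathbb Z$. Conversely, if $(a,b)\in\mathbb Z^2$, every cosine equals $1$, so equality holds.

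The only step that needs attention is the bookkeeping in the double Poisson summation: we must track the linear phase that is produced when the Gaussian is shifted by $mx$, and check that it recombines into the shift $x(q+b)$. The absolute convergence noted in the proof of Proposition~\ref{Prop21} justifies exchanging the order of summation.
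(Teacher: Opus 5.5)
Your proof is correct and is the standard Poisson-summation argument the lemma's title points to. The paper gives no written proof of this lemma. After your first Poisson step, the interchange of summation and $p\mapsto -n$, your intermediate expression is exactly the rowwise formula $J(x+iy;a,b)=\sqrt y\sum_{n}e^{-\pi y(n+b)^2}\vartheta(y;a+x(n+b))$ that the paper uses in Lemma~\ref{lem:cusp}. Your positivity bound, upper bound and equality case (from positive terms in \eqref{eq:positive-translated} and strictly positive Gaussian weights) are also right.

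One small correction on justification. The interchange $\sum_m\sum_p=\sum_p\sum_m$ needs absolute convergence of the new double series $\sqrt y\sum_{m,p}e^{-\pi y m^2}e^{-\pi y(p-b)^2}$, not of the original series from Proposition~\ref{Prop21}. This is immediate, since that series factors into two convergent Gaussian sums.
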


The centered problem has a useful exact expression in terms of
unshifted theta functions, as in our previous work~\cite{LW}.
\begin{lemma}[The centered identity]\label{lem:centered-identity}
For $z\in\mathbb H$ and $\alpha\in\R$,
\begin{equation}\label{eq:centered-identity}
 E_\alpha(z;1/2,1/2)
 =(1-\alpha)\theta(1;z)+2\alpha\theta\big(2;\frac{z+1}{2}\big).
\end{equation}
\end{lemma}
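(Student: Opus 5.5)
The plan is a direct term-by-term computation: split the lattice sum by the parity of $m+n$, then recognize the even-parity sublattice as a rescaled lattice of shape $(z+1)/2$.

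\emph{Step 1: rewrite the centered energy as a signed theta sum.} Write $z=x+iy$ and use the representation of $E_\alpha$ from the proof of Proposition~\ref{Prop21}:
\[
E_\alpha(z;a,b)=\sum_{m,n\in\mathbb Z}e^{-\pi|mz-n|^2/y}\bigl[1+\alpha\cos\bigl(2\pi(ma+nb)\bigr)\bigr].
\]
At $(a,b)=(1/2,1/2)$ we have $\cos(\pi(m+n))=(-1)^{m+n}$. The weight is therefore $1+\alpha$ when $m+n$ is even and $1-\alpha$ when $m+n$ is odd. Writing $1+\alpha(-1)^{m+n}=(1-\alpha)+2\alpha\,\mathbf 1_{\{m+n\ \mathrm{even}\}}$ and using absolute convergence to split the series gives
\[
E_\alpha(z;1/2,1/2)=(1-\alpha)\theta(1;z)+2\alpha\,S(z),\qquad S(z):=\sum_{m+n\ \mathrm{even}}e^{-\pi|mz-n|^2/y}.
\]

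\emph{Step 2: identify the even-parity sublattice.} It remains to show $S(z)=\theta\bigl(2;\tfrac{z+1}{2}\bigr)$. The set $L=\{mz-n:\ m+n\ \text{even}\}$ is the index-two sublattice of $\mathbb Z z\oplus\mathbb Z$. I claim $L=2\mathbb Z\oplus(z+1)\mathbb Z$.
\begin{itemize}
\item The inclusion $\supseteq$ holds because $(m,n)=(0,-2)$ gives $2$ and $(m,n)=(1,-1)$ gives $z+1$, and both have $m+n$ even.
\item For $\subseteq$, if $m+n$ is even then $mz-n=m(z+1)-(m+n)$, and $m+n\in2\mathbb Z$.
\end{itemize}
Since the two generators are $\mathbb R$-linearly independent, the map $(k,l)\mapsto k(z+1)+2l$ is a bijection $\mathbb Z^2\to L$. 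Hence $L=2\bigl(\mathbb Z w\oplus\mathbb Z\bigr)$ with $w=\tfrac{z+1}{2}$.

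\emph{Step 3: match the normalization.} Note that $\Im w=y/2$. Then
\[
S(z)=\sum_{k,l}e^{-\pi\cdot4|kw+l|^2/y}=\sum_{k,l}e^{-\pi\cdot 2\,|kw+l|^2/\Im w}=\theta(2;w),
\]
by the definition \eqref{thetas} with $s=2$. Substituting into Step 1 yields \eqref{eq:centered-identity} for every real $\alpha$.

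The only point needing care is the reindexing of the even sublattice together with the factor bookkeeping between $1/y$ and $1/\Im w$. Everything else is absolutely convergent rearrangement.
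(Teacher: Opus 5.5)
Your proof is correct and complete. The parity split of the weight $1+\alpha(-1)^{m+n}$, the reindexing $(k,l)=\bigl(m,-(m+n)/2\bigr)$ identifying $\{mz-n:\ m+n\ \text{even}\}$ with $2(\mathbb Z w\oplus\mathbb Z)$ for $w=\frac{z+1}{2}$, and the normalization check $4/y=2/\Im w$ are exactly what the identity needs. The paper gives no argument of its own here (it states the identity and defers to \cite{LW}), so your computation supplies a self-contained derivation.
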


For nonnegative coupling, the following comparison excludes the
cusp and makes the unrestricted minimization problem compact.

\begin{lemma}[Compactness of the minimization problem]\label{lem:cusp}
For every $0\leq\alpha\leq1$, the unrestricted minimum of $E_\alpha$ is
attained. Every minimizing configuration has a representative satisfying
\[
 0\leq x\leq\frac12,\qquad x^2+y^2\geq1,\qquad
 \frac{\sqrt3}{2}\leq y<2.
\]
\end{lemma}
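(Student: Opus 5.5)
Since $0\le\alpha\le1$ and, by \eqref{eq:translated-bounds}, $J\ge0$, we have the two-sided bound
\[
 \theta(1;z)\le E_\alpha(z;a,b)\le(1+\alpha)\theta(1;z).
\]
So the plan is to handle the cusp using only $\theta(1;z)$. By Proposition~\ref{Prop21} (whose reductions preserve $E_\alpha$ exactly), every configuration has a representative with $z\in\overline{\mathcal D_{\mathcal G}}$ and $(a,b)\in[0,1]^2$. On this domain $0\le x\le\frac12$ and $x^2+y^2\ge1$ are automatic, and these two together force $y\ge\sqrt{1-x^2}\ge\sqrt3/2$. It therefore remains to show attainment and to exclude $y\ge2$ for every minimizer.

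\textbf{Comparison with a fixed competitor.} Fix the competitor $(z_0;a_0,b_0)=(i;1/2,1/2)$. By the centered identity \eqref{eq:centered-identity},
\[
 E_\alpha(i;1/2,1/2)=(1-\alpha)\theta(1;i)+2\alpha\theta\bigl(2;\tfrac{i+1}{2}\bigr),
\]
which is bounded by $\max\{\theta(1;i),\,2\theta(2;\frac{1+i}{2})\}$. A simpler alternative is the bound $E_\alpha(i;1/2,1/2)\le(1+\alpha)\theta(1;i)$ combined with an explicit evaluation of $J(i;1/2,1/2)$ via $\vartheta$-products: $J(i;1/2,1/2)=\vartheta_4(1)^2$, which is small (about $0.84$), while $\theta(1;i)=\vartheta_3(1)^2\approx1.18$. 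This gives $E_\alpha(i;\frac12,\frac12)\le \vartheta_3(1)^2+\vartheta_4(1)^2<2.03$.

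\textbf{Lower bound in the cusp.} For $y\ge2$, keep only the $m=0$ terms of $\theta$ and apply Poisson summation in $n$:
\[
 \theta(1;z)\ge\sum_n e^{-\pi n^2/y}=\sqrt y\,\vartheta_3(y)\ge\sqrt y .
\]
At $y=2$ this gives only $\sqrt2\approx1.414$, which is not enough against about $2.03$. So the lower bound must also use the displacement term. Instead, use $E_\alpha\ge\theta$ together with a sharper competitor: compare with $\min$ at $\alpha$-independent points. The cleanest route is
\[
 E_\alpha(z;a,b)\ge\theta(1;z)\ge\sqrt y,
\]
which beats the competitor once $y$ is large (for example $y\ge 4.2$), and then fill in the finite window $2\le y\le 4.2$ with a finer estimate. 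There, write $E_\alpha$ via \eqref{eq:positive-translated}, keep the $m=0$ row $\sqrt y\,(1+\alpha\,\vartheta(y;b)/\vartheta_3(y))$ up to exponentially small $m\ne0$ corrections, and compare. I expect this step to be the main obstacle: it needs monotonicity in $y$ of the reduced one-row energy plus explicit tail bounds, which are the sort of finite-sum certificates deferred to the appendix. If this becomes too delicate, I would compare instead with the triangular competitor $E_\alpha(e^{i\pi/3};1/3,1/3)$, using $J(e^{i\pi/3};1/3,1/3)$ small, to lower the threshold.

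\textbf{Attainment.} Once all configurations with $y\ge2$ have energy strictly above the competitor, the infimum over the reduced domain equals the infimum over the compact set
\[
 \{0\le x\le\tfrac12,\ x^2+y^2\ge1,\ y\le2\}\times[0,1]^2 .
\]
$E_\alpha$ is continuous there (the series converges uniformly locally), so the minimum is attained. Any minimizer, after reduction by Proposition~\ref{Prop21}, lies in this set with $y<2$ strictly, by the strict inequality above.
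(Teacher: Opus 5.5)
There is a genuine gap, and it sits exactly at the step you flagged. The competitor $(i;\frac12,\frac12)$ is too weak, so the strict inequality you need on $\{y\geq2\}$ is \emph{false} for $\alpha$ near $1$, and no sharper estimate on the window $2\le y\le4.2$ can rescue it. Concretely, $E_1(i;\frac12,\frac12)=\vartheta_3(1)^2+\vartheta_4(1)^2\approx2.0150$. The centered rectangle at height $2$, however, has $E_1(2i;\frac12,\frac12)=\sqrt2\,[\vartheta_3(2)^2+\vartheta_4(2)\vartheta_2(2)]\approx2.0106$. Both energies are affine in $\alpha$, and $\theta(1;2i)-\theta(1;i)\approx0.2445$, so the configuration $(2i;\frac12,\frac12)$ has strictly lower energy than your competitor for all $\alpha$ above roughly $0.982$. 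Comparing with the cruder value $2.03$ is worse still. Your fallback is also worse: $E_1(e^{i\pi/3};\frac13,\frac13)\approx2.08$. Beyond this, the window estimate itself, which is the real content of the lemma, is only sketched.

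The competitor must have energy at $\alpha=1$ below about $2.0106$. The paper uses the $\alpha=1$ minimizer $(i\sqrt3;\frac12,\frac12)$, with $E_1\approx2.0084$. The margin is then only about $2\cdot10^{-3}$, so the lower bound must keep the displacement contribution essentially sharp.

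The paper obtains this from the Jacobi product bound $\vartheta(t;u)\geq\vartheta_4(t)$, applied after rowwise Poisson summation. Uniformly in $x,a,b$ this gives
$\theta(1;x+iy)\geq\sqrt y\,[\vartheta_3(y)+\vartheta_4(y)(\vartheta_3(y)-1)]$ and
$J(x+iy;a,b)\geq\sqrt y\,\vartheta_4(y)\vartheta_2(y)$. Hence
$E_\alpha\geq\sqrt y\,[1+4e^{-\pi y}(1-e^{-\pi y})+2\alpha e^{-\pi y/4}(1-2e^{-\pi y})]$.

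A derivative computation shows this bound is increasing on $y\geq2$, so no separate window is needed. At $y=2$, the bound and $E_\alpha(i\sqrt3;\frac12,\frac12)$ are both affine in $\alpha$. It therefore suffices to check the endpoints: at $\alpha=0$ the values are about $1.4248$ versus $1.3390$, and at $\alpha=1$ about $2.0105$ versus $2.0084$. This gives the strict exclusion of $y\geq2$ for every $\alpha\in[0,1]$.

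Your reduction to $\overline{\mathcal D_{\mathcal G}}\times[0,1]^2$ and your continuity argument for attainment are fine once that exclusion is in hand.
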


\begin{proof}
By Proposition~\ref{Prop21}, it suffices to consider
$z\in\overline{\mathcal D_{\mathcal G}}$ and $(a,b)\in[0,1]^2$.
For $t>0$ and $u\in\mathbb R$, the Jacobi product gives
\[
\begin{aligned}
 \vartheta(t;u)
 &=\prod_{k\geq1}(1-e^{-2\pi kt})
 \bigl[(1-e^{-\pi(2k-1)t})^2
       +4e^{-\pi(2k-1)t}\cos^2(\pi u)\bigr]\\
 &\geq\vartheta_4(t)>0.
\end{aligned}
\]
Poisson summation and \eqref{eq:positive-translated} yield
\[
\begin{aligned}
 \theta(1;x+iy)
 &=\sqrt y\sum_{m\in\mathbb Z}e^{-\pi ym^2}\vartheta(y;mx)\\
 &\geq\sqrt y\,[\vartheta_3(y)+\vartheta_4(y)(\vartheta_3(y)-1)],\\
 J(x+iy;a,b)
 &=\sqrt y\sum_{n\in\mathbb Z}e^{-\pi y(n+b)^2}
       \vartheta\bigl(y;a+x(n+b)\bigr)\\
 &\geq\sqrt y\,\vartheta_4(y)
       \sum_{n\in\mathbb Z}e^{-\pi y(n+b)^2}.
\end{aligned}
\]
Moreover,
\[
 \sum_{n\in\mathbb Z}e^{-\pi y(n+b)^2}
 =\frac1{\sqrt y}\vartheta(1/y;b)
 \geq\frac1{\sqrt y}\vartheta_4(1/y)=\vartheta_2(y).
\]
For $y\geq2$, the bounds
\[
 \vartheta_3(y)\geq1+2e^{-\pi y},\qquad
 \vartheta_4(y)\geq1-2e^{-\pi y}>0,\qquad
 \vartheta_2(y)\geq2e^{-\pi y/4}
\]
give
\[
 E_\alpha(x+iy;a,b)\geq
 \sqrt y\,[1+4e^{-\pi y}(1-e^{-\pi y})
               +2\alpha e^{-\pi y/4}(1-2e^{-\pi y})].
\]
Since
\[
\begin{aligned}
 \frac d{dy}\bigl[(2\pi y-1)e^{-\pi y}\bigr]
 &=\pi e^{-\pi y}(3-2\pi y)<0,\\
 \frac d{dy}\bigl[(\pi y-2)e^{-\pi y/4}\bigr]
 &=\frac\pi4e^{-\pi y/4}(6-\pi y)<0,
\end{aligned}
\]
we obtain, for $0\leq\alpha\leq1$,
\[
\begin{aligned}
 &2\sqrt y\,\frac d{dy}
 \left\{\sqrt y\,[1+4e^{-\pi y}(1-e^{-\pi y})
                 +2\alpha e^{-\pi y/4}(1-2e^{-\pi y})]\right\}\\
 &\quad=1-4(2\pi y-1)e^{-\pi y}+4(4\pi y-1)e^{-2\pi y}\\
 &\qquad\quad-\alpha(\pi y-2)e^{-\pi y/4}
               +2\alpha(5\pi y-2)e^{-5\pi y/4}\\
 &\quad\geq1-4(2\pi y-1)e^{-\pi y}-(\pi y-2)e^{-\pi y/4}\\
 &\quad\geq1-4(4\pi-1)e^{-2\pi}-(2\pi-2)e^{-\pi/2}>\frac1{50}.
\end{aligned}
\]

Thus the displayed lower bound is strictly increasing in $y$.
For $t>0$, the geometric and alternating series give
\[
 \vartheta_3(t)\leq1+\frac{2e^{-\pi t}}{1-e^{-3\pi t}},\qquad
 \vartheta_4(t)\leq1-2e^{-\pi t}+2e^{-4\pi t},\qquad
 \vartheta_2(t)\leq\frac{2e^{-\pi t/4}}{1-e^{-2\pi t}}.
\]
At the centered rectangular competitor, these imply
\[
\begin{aligned}
 \theta(1;i\sqrt3)
 &=3^{1/4}\vartheta_3(\sqrt3)^2
 <\frac{71}{50}<\sqrt2\,[1+4e^{-2\pi}(1-e^{-2\pi})],\\
 E_1(i\sqrt3;1/2,1/2)
 &=3^{1/4}\bigl[\vartheta_3(\sqrt3)^2
       +\vartheta_4(\sqrt3)\vartheta_2(\sqrt3)\bigr]\\
 &<\frac{201}{100}
 <\sqrt2\,[1+4e^{-2\pi}(1-e^{-2\pi})
                 +2e^{-\pi/2}(1-2e^{-2\pi})].
\end{aligned}
\]
Affine dependence on $\alpha$ therefore gives, for $y\geq2$,
\[
\begin{aligned}
 E_\alpha(x+iy;a,b)
 &\geq\sqrt2\,[1+4e^{-2\pi}(1-e^{-2\pi})
                   +2\alpha e^{-\pi/2}(1-2e^{-2\pi})]\\
 &>(1-\alpha)\theta(1;i\sqrt3)
       +\alpha E_1(i\sqrt3;1/2,1/2)\\
 &=E_\alpha(i\sqrt3;1/2,1/2).
\end{aligned}
\]
The Gaussian series converge uniformly on
\[
 \{z\in\overline{\mathcal D_{\mathcal G}}:\Im z\leq2\}
 \times[0,1]^2.
\]
Continuity gives a minimum, and the strict comparison excludes $y\geq2$.
\end{proof}

The next lemma is the common Fourier lower bound used to compare
different lattice shapes. Minimizing the contribution of four Fourier
pairs eliminates the displacement, while positivity controls the
omitted terms.

\begin{lemma}[Elimination of the displacement]\label{lem:four}
Let $0\leq\alpha\leq1$ and $z=x+iy\in\mathbb H$.
For $(m,n)\in\mathbb Z^2$, define
\begin{equation}\label{eq:lattice-quadratic-form}
 Q_{m,n}=Q_{m,n}(x,y):=\frac{|mz-n|^2}{y}
                       =m^2y+\frac{(mx-n)^2}{y},
\end{equation}
and put
\[
 A=e^{-\pi(y+x^2/y)},\quad B=e^{-\pi/y},\quad
 C=e^{-\pi(y+(x-1)^2/y)},\quad D=e^{-\pi(y+(x+1)^2/y)}.
\]
Assume that
\[
 x\geq0,\qquad A^2>4CD,\qquad B^2>4CD.
\]
Then, for every $(a,b)\in\mathbb R^2$,
\begin{align}\label{eq:lower}
 E_\alpha(z;a,b)\geq{}&1+\alpha+2(A+B+C+D)\\
 &+2\alpha\min_{-1\leq t\leq1}
 \left[Bt-\sqrt{A^2+(C-D)^2+2A(C+D)t+4CDt^2}\right]\nonumber\\
 &+(1-\alpha)
 \sum_{\substack{|m|,|n|\leq2\\
 (m,n)\notin\{(0,0),\pm(1,0),\pm(0,1),\pm(1,1),\pm(1,-1)\}}}
 e^{-\pi Q_{m,n}}.\nonumber
\end{align}
The right-hand side is locally Lipschitz in $(x,y)$, and the scalar
minimum is attained uniquely at
\[
\begin{aligned}
 t&=\max\{-1,\min\{1,t_0\}\},\\
 t_0&=
 \frac{A^2(C+D)^2-B^2[A^2+(C-D)^2]}
 {\sqrt{B^2-4CD}
 [B(C-D)\sqrt{A^2-4CD}+A(C+D)\sqrt{B^2-4CD}]}.
\end{aligned}
\]
\end{lemma}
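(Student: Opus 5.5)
The plan is to split the double series for $E_\alpha$ into the origin, the four Fourier pairs named in the statement, and a nonnegative remainder, and then to minimize explicitly over the displacement. By the proof of Proposition~\ref{Prop21},
\[
 E_\alpha(z;a,b)=\sum_{m,n}e^{-\pi Q_{m,n}}\bigl[1+\alpha\cos(2\pi(ma+nb))\bigr].
\]
The terms split as follows.
\begin{enumerate}
\item The term $(0,0)$ gives $1+\alpha$.
\item The pair $\pm(1,0)$ has weight $A$ and phase $u:=2\pi a$.
\item The pair $\pm(0,1)$ has weight $B=e^{-\pi/y}$ and phase $v:=2\pi b$.
\item The pair $\pm(1,1)$ has weight $C$, since $Q_{1,1}=y+(x-1)^2/y$, and phase $u+v$.
\item The pair $\pm(1,-1)$ has weight $D$ and phase $u-v$.
\end{enumerate}
Hence these nine terms contribute exactly
\[
 1+\alpha+2(A+B+C+D)+2\alpha F(u,v),\qquad F:=A\cos u+B\cos v+C\cos(u+v)+D\cos(u-v).
\]
For every other $(m,n)$ I use $1+\alpha\cos(\cdot)\ge1-\alpha\ge0$, since $0\le\alpha\le1$. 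I keep the terms with $|m|,|n|\le2$, each with factor $(1-\alpha)$, and discard the rest because they are nonnegative. This gives \eqref{eq:lower} once I show $\min F=\min_{t\in[-1,1]}[Bt-\sqrt{R(t)}]$, where $R(t)=A^2+(C-D)^2+2A(C+D)t+4CDt^2$.

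To eliminate $u$, fix $v$ and put $t=\cos v$. Then
\[
 F=\cos u\,(A+(C+D)t)-\sin u\,(C-D)\sin v+Bt,
\]
so
\[
 \min_u F=Bt-\sqrt{(A+(C+D)t)^2+(C-D)^2(1-t^2)},
\]
and expanding the square root's argument gives exactly $R(t)$. Since $t=\cos v$ ranges over $[-1,1]$, $\min F=\min_{[-1,1]}g$ with $g(t)=Bt-\sqrt{R(t)}$. I must check that $R>0$ on $[-1,1]$. As a sum of two squares, $R$ can vanish only if $C=D$ and $A+2Ct=0$. That would force $A\le2C$, which contradicts $A^2>4CD=4C^2$. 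So $\sqrt R$ is smooth on $[-1,1]$.

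Convexity comes from the identity
\[
 2RR''-R'^2=4\cdot4CD\,(A^2+(C-D)^2)-4A^2(C+D)^2=-4(C-D)^2(A^2-4CD)\le0 .
\]
Hence $\sqrt R$ is concave, $g$ is convex, and $g$ is strictly convex when $C\ne D$. Note that $x\ge0$ gives $C\ge D$. The minimizer is therefore the clip of the unique critical point to $[-1,1]$. The critical point solves $2B\sqrt R=R'$, and squaring yields a quadratic in $t$. Using $B^2>4CD$ (so the leading coefficient $4CD(4CD-B^2)$ is nonzero) and $A^2>4CD$, I will pick the root compatible with $R'>0$. Rationalizing that root should produce the displayed $t_0$.

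The degenerate case $C=D$ needs separate handling. Then $\sqrt R=A+2Ct$ is linear and $g$ is linear with slope $B-2C>0$, by $B^2>4C^2$. So $g$ is minimized uniquely at $t=-1$, and the formula for $t_0$ indeed gives $t_0\le-1$ there.

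For the Lipschitz claim, $A,B,C,D$ and the $Q_{m,n}$ are smooth in $(x,y)$. Moreover $(x,y,t)\mapsto Bt-\sqrt R$ is locally Lipschitz, because $R$ is bounded below by a positive constant locally, by the strict inequalities above. A minimum over the fixed compact set $t\in[-1,1]$ of a jointly Lipschitz function is again locally Lipschitz.

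The main obstacle is the algebra identifying the correct root of the squared critical-point equation with the closed form $t_0$, including the sign choices. I will handle this by verifying directly that $t_0$ satisfies $2B\sqrt{R(t_0)}=R'(t_0)$ with $R'(t_0)>0$, and rely on convexity for uniqueness.
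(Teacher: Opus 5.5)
Your route matches the paper's: the same nine-term split with the $(1-\alpha)$ remainder, the amplitude--phase elimination of $u$, and the identity $2RR''-R'^2=-4(C-D)^2(A^2-4CD)$, which is equivalent to the paper's formula for $f''$. You also handle the linear case $C=D$ (i.e.\ $x=0$) separately and clip the critical point, as the paper does. Your plan for $t_0$ works as well. The displayed $t_0$ satisfies $4B^2R(t_0)=R'(t_0)^2$ and $R'(t_0)=2B(C-D)\sqrt{A^2-4CD}/\sqrt{B^2-4CD}>0$ when $C>D$.

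There is, however, one false step: $R$ need not be positive on the closed interval $[-1,1]$. In $R(t)=(A+(C+D)t)^2+(C-D)^2(1-t^2)$ the second square vanishes at $t=\pm1$ whatever $C,D$ are. Hence $R(-1)=(A-C-D)^2$, which is zero on the curve $A=C+D$, i.e.\ $e^{\pi(1-2x)/y}=1+e^{-4\pi x/y}$. This is compatible with all hypotheses on the reduced region with $y\le2$. The curve crosses that region just left of $x=1/2$, for instance at $y=1$, $x\approx0.4997$, and at $y=2$, $x\approx0.485$. That is exactly where Lemma~\ref{lem:cert-global} uses the bound. There $R'(-1)=2(C-D)^2>0$, so $\sqrt R\approx\sqrt2\,(C-D)\sqrt{t+1}$ and $g'(t)\to-\infty$ as $t\to-1^+$. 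This is why the paper asserts positivity of the radicand only for $-1<t<1$ and writes the endpoint value as $-B-|A-C-D|$.

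Two of your conclusions rest on this claim.

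\emph{Convexity and the clip formula} survive with a small repair. For $C>D$ both roots of $R$ are negative and $R$ has no zero in $(-1,1)$. So the larger root satisfies $r_2\le-1$, and $R>0$ on $(r_2,\infty)$. Your identity then gives strict convexity of $g$ on $(r_2,\infty)$, which is what the clip actually needs when $t_0\notin[-1,1]$. Continuity handles $t=-1$, and on the degenerate curve $t_0>-1$ automatically.

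\emph{The Lipschitz argument fails as written.} Near $t=-1$ on that curve, $\partial_t\sqrt R$ is unbounded. So $(x,y,t)\mapsto Bt-\sqrt R$ is not jointly Lipschitz, and $R$ has no positive local lower bound there. Joint Lipschitz continuity is not needed, though. It suffices that $\sqrt R$ be Lipschitz in $(x,y)$ uniformly in $t$. This holds because $\sqrt R$ is the Euclidean norm of $\bigl(A+(C+D)t,\,(C-D)\sqrt{1-t^2}\bigr)$, whose $(x,y)$-derivatives are bounded uniformly for $t\in[-1,1]$. Alternatively, argue as the paper does: view the minimum as the minimum over the compact torus of the smooth function $A\cos u+B\cos v+C\cos(u+v)+D\cos(u-v)$, whose shape derivatives are bounded uniformly in $(u,v)$.
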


\begin{proof}
With $u=2\pi a$ and $v=2\pi b$, the four retained pairs contribute
\[
 A\cos u+B\cos v+C\cos(u+v)+D\cos(u-v).
\]
For $t=\cos v$, minimization in $u$ gives precisely the minimum
in \eqref{eq:lower}. Every omitted coefficient of $E_\alpha$ is
$1+\alpha\cos(2\pi(ma+nb))\geq1-\alpha\geq0$.
Retaining the stated finite remainder proves the inequality.

Let $f(t)$ denote the function inside the minimum. The assumptions give
\[
 C/D=e^{4\pi x/y}\geq1,\qquad
 A(C+D)\geq2A\sqrt{CD}>4CD.
\]
If $x=0$, then $C=D$, $A>2C$, and $B>2C$. Thus
\[
 f(t)=(B-2C)t-A,\qquad t_0=-\frac{A}{2C}<-1,
\]
and the unique minimum is at $t=-1$, as asserted.
Assume henceforth that $C>D$. The radicand is positive for
$-1<t<1$, and
\[
 f''(t)=
 \frac{(C-D)^2(A^2-4CD)}
 {[A^2+(C-D)^2+2A(C+D)t+4CDt^2]^{3/2}}>0
 \qquad(-1<t<1).
\]
Thus $f$ is strictly convex on $[-1,1]$.
Solving $f'(t)=0$ on the branch containing $[-1,1]$ gives $t=t_0$.
Strict convexity then gives the unique constrained minimizer
$t=\max\{-1,\min\{1,t_0\}\}$.
The interior stationary value is
\[
 -\frac{A^2B^2+(C-D)^2(A^2+B^2-4CD)}
 {AB(C+D)+(C-D)\sqrt{(A^2-4CD)(B^2-4CD)}}.
\]
If $f'(-1)\geq0$, the minimum equals
$-B-|A-C-D|$. When the unconstrained stationary point lies outside
$[-1,1]$, its value is still a valid lower bound for the constrained
minimum; the projected point gives the exact value.
These formulas avoid cancellation as $D$ tends to zero.

At a minimizing $t$ for which
$A^2+(C-D)^2+2A(C+D)t+4CDt^2>0$, the corresponding cosines are
\[
\begin{aligned}
 \cos u&=-\frac{A+(C+D)t}
 {\sqrt{A^2+(C-D)^2+2A(C+D)t+4CDt^2}},&\cos v&=t,\\
 \cos(u+v)&=\frac{-[A+(C+D)t]t-(C-D)(1-t^2)}
 {\sqrt{A^2+(C-D)^2+2A(C+D)t+4CDt^2}},\\
 \cos(u-v)&=\frac{-[A+(C+D)t]t+(C-D)(1-t^2)}
 {\sqrt{A^2+(C-D)^2+2A(C+D)t+4CDt^2}}.
\end{aligned}
\]
The shape derivative of the minimized contribution is obtained by
multiplying the derivatives of $A,B,C,D$ by
$2(1+\alpha\cos u)$ and the three analogous factors.
The finite remainder is differentiated termwise.

To justify the derivative bounds at changes of branch, the minimum is
taken over a compact displacement torus of smooth functions whose shape
derivatives are bounded on compact shape rectangles. It is therefore
locally Lipschitz. Along any line segment it is absolutely continuous,
and its derivative almost everywhere is bounded by the directional
derivatives at the minimizing displacements. The displayed cosine
formulas enclose those derivatives; when the radicand interval meets
zero, using $[-1,1]$ for every cosine remains valid.
Thus integrating the resulting derivative bounds along segments gives
rigorous shape-box estimates, including boxes meeting a branch change.
\end{proof}

The inequality \eqref{eq:lower} itself holds for every $z\in\mathbb H$;
the coefficient assumptions are used for the explicit minimization.
In all subsequent applications, Lemma~\ref{lem:cusp} allows us to work
on $z\in\overline{\mathcal D_{\mathcal G}}$ with $y\leq2$. On this region,
$x\geq0$ and $y+x^2/y\geq\max\{y,1/y\}\geq1$, so
\[
 \frac{A^2}{4CD}=\frac14e^{2\pi/y}\geq\frac14e^\pi>1,
 \qquad
 \frac{B^2}{4CD}=\frac14e^{2\pi(y+x^2/y)}
 \geq\frac14e^{2\pi}>1.
\]

\section{Comparisons with $E_\alpha(z;1/2,1/2)$}\label{sec:centered}

In this section, we prove that every minimizer is centered for
$7/40\leq\alpha\leq23/25$. We also obtain the comparisons needed
to connect this range with the first transition.

\begin{theorem}\label{thm:centered}
For $7/40\leq\alpha\leq23/25$,
\[
 \min_{z\in\mathbb H,\ (a,b)\in\mathbb R^2/\mathbb Z^2}
 E_\alpha(z;a,b)
 =\min_{z\in\mathbb H}E_\alpha(z;1/2,1/2).
\]
Every minimizing configuration is equivalent to a centered minimizing
configuration. In particular, the square lattice is the unique minimizing
shape for $2/5\leq\alpha\leq23/25$, modulo the stated symmetries.
\end{theorem}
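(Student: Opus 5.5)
Our plan is to split the reduced shape domain from Lemma~\ref{lem:cusp} into two parts and treat each one separately. The domain is $0\le x\le\frac12$, $x^2+y^2\ge1$, $\frac{\sqrt3}{2}\le y<2$, with $(a,b)\in[0,1]^2$. On the first part we show that, for each fixed shape, the centered displacement is the displacement minimizer. On the second part we show that any configuration there has energy strictly larger than some explicit centered competitor. In both cases the argument reduces the unrestricted minimum to $\min_{\tau}E_\alpha(\tau;1/2,1/2)$.

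\textbf{Step 1 (centered strip).} For $x\le x_*$ (we aim for $x_*=39/100$, so that $y\ge 23/25$), we prove the pointwise inequality
\[
E_\alpha(z;a,b)\ge E_\alpha(z;1/2,1/2),
\]
with equality only at $(a,b)\equiv(1/2,1/2)$. Since $\theta(1;z)$ does not depend on the displacement and $\alpha>0$, this reduces to a statement about $J$ alone: $J(z;\cdot,\cdot)$ is minimized at $(1/2,1/2)$. We use the four-pair structure of Lemma~\ref{lem:four}. The retained part is $A\cos u+B\cos v+C\cos(u+v)+D\cos(u-v)$. Here $A,B$ dominate $C,D$ by the ratio bounds $A^2/4CD,\,B^2/4CD>1$ recorded after the lemma. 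The minimizing $t$ is then $t=-1$ whenever $t_0\le-1$, and this forces $u=v=\pi$.

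We check $t_0\le-1$ on the strip. Near $x=0$ it follows from $t_0=-A/(2C)$, and elsewhere from a monotonicity/box argument in $(x,y)$. The omitted Fourier modes are handled through the product representation used in Lemma~\ref{lem:cusp}: writing $J=\sqrt y\sum_n e^{-\pi y(n+b)^2}\vartheta(y;a+x(n+b))$, each $\vartheta(y;\cdot)$ is minimized at half-integers by the Jacobi product. For $x$ small, the choice $a=b=1/2$ simultaneously makes the dominant arguments half-integers, and the error from the $n\neq0,-1$ terms is exponentially small in $y\ge23/25$.

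\textbf{Step 2 (the wedge $x_*\le x\le\frac12$).} Here the shape is close to the triangular point. We use the lower bound \eqref{eq:lower}, which already eliminates the displacement, and show that the right-hand side exceeds an explicit centered value. For $\alpha$ near $7/40$ the competitor is $E_\alpha(\frac{3+i\sqrt{55}}{8};1/2,1/2)$, as in Proposition~\ref{prop:bridge}. For larger $\alpha$ the competitor is $E_\alpha(i;1/2,1/2)$.

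Because both sides are affine in $\alpha$, it suffices to verify the inequality at finitely many $\alpha$-nodes in $[7/40,23/25]$, interpolating between adjacent nodes. The $(x,y)$-region is compact and the lower bound is locally Lipschitz (Lemma~\ref{lem:four}), so we cover it by finitely many boxes. On each box we use the derivative enclosures from the explicit cosine formulas together with tail bounds on the truncated sums; these certificates are those deferred to Appendix~\ref{app:certificates}. The competitor values themselves are computed through the centered identity \eqref{eq:centered-identity}.

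\textbf{Step 3 (conclusion).} Combining Steps 1--2, every minimizer lies in the strip at the centered displacement, which gives the first two assertions. For $2/5\le\alpha\le23/25$, the centered identity writes $E_\alpha(z;1/2,1/2)=(1-\alpha)\theta(1;z)+2\alpha\theta(2;\frac{z+1}{2})$. By \cite[Theorem~2.1]{LW}, the minimizer of this sum is $z=i$ whenever $\alpha_b\le\alpha\le\alpha_c$, and the numerical values $\alpha_b<0.38<2/5$ and $\alpha_c>0.925>23/25$ cover the stated range.

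\textbf{Main obstacle.} The hard part is Step 2 near the lower endpoint $\alpha=7/40$. There the triangular deep-hole energy is only barely above the centered minimum, so the lower bound \eqref{eq:lower} must be sharp near $e^{i\pi/3}$, with the displacement near $(1/3,1/3)$. We would first try to enlarge the retained Fourier set, adding the $(1,\pm2)$ and $(2,1)$ pairs into the exact minimization, and to refine the boxes adaptively near $p_0$. If that is still insufficient, we would instead use a local Hessian convexity argument around $p_0$ of the kind in Lemma~\ref{lem:hex-convexity}.
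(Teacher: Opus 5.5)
Your Steps 2 and 3 are essentially the paper's argument. The wedge $x\ge 39/100$ is excluded by the displacement-free bound \eqref{eq:lower} against fixed centered competitors on a finite box cover, with affine interpolation in $\alpha$. The paper does this in Lemma~\ref{lem:cert-global}, using the arc points $z_j$ on six subintervals of $[7/40,2/5]$ and the square on $[2/5,23/25]$. The square phase then follows from the Luo--Wei classification. The paper finds that the four-pair bound with its $|m|,|n|\le2$ remainder already suffices at $\alpha=7/40$, so your fallback plans are not needed.

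The gap is in Step 1. Showing that the four retained pairs are uniquely minimized at $u=v=\pi$ does not show that $J(z;\cdot,\cdot)$ is. The omitted modes $e^{-\pi Q_{m,n}}\cos(2\pi(ma+nb))$ have both signs. An additive bound on their oscillation, however small, only confines the minimizer to a sublevel set around $(1/2,1/2)$. The excess of the retained part vanishes quadratically there, so an additive error can never give $J(z;a,b)>J(z;\tfrac12,\tfrac12)$ for every $(a,b)\not\equiv(\tfrac12,\tfrac12)$, which is what exact centering and the equality case require. Near $x=39/100$ the retained margin is also thin: on $|z|=1$ one has $A=B$ and $t_0=-A/(2C)\approx-1.06$.

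The row mechanism you invoke does not work either. At $a=b=\tfrac12$ the two dominant rows of $\sqrt y\sum_n e^{-\pi y(n+b)^2}\vartheta(y;a+x(n+b))$ have arguments $\tfrac12\pm\tfrac x2$. These are half-integers only when $x=0$. For $x$ up to $39/100$ the two rows cannot both sit at their minima, and this loss is of the same order as the rows' own variation, not exponentially small.

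What is missing is a comparison that vanishes to second order at the centered displacement. The paper sets $u=2\pi(a-\tfrac12)$ and $v=2\pi(b-\tfrac12)$, so that $\cos(2\pi(ma+nb))=(-1)^{m+n}\cos(mu+nv)$. It then writes $J(z;a,b)-J(z;\tfrac12,\tfrac12)$ as the odd-parity sum of $e^{-\pi Q_{m,n}}[1-\cos(mu+nv)]$, which is nonnegative, minus the corresponding even-parity sum. From the odd sum it keeps only $\pm(1,0)$ and $\pm(0,1)$. Each even term is bounded using $1-\cos(mu+nv)\le(|m|\sqrt{1-\cos u}+|n|\sqrt{1-\cos v})^2$. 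This yields a $2\times2$ quadratic form $H$ in $(\sqrt{1-\cos u},\sqrt{1-\cos v})$, which is certified positive definite on the whole strip (Lemmas~\ref{lem:centerstrip} and~\ref{lem:cert-strip}).

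An alternative repair is localization followed by a uniform positive lower bound on $D^2_{a,b}J$ near $(1/2,1/2)$. That point is a critical point by the symmetry $(a,b)\mapsto(1-a,1-b)$, and the argument would run as in Lemma~\ref{lem:hex-shift-localization}. Without one of these, Step 1 does not establish $E_\alpha(z;a,b)\ge E_\alpha(z;\tfrac12,\tfrac12)$ with its equality case, so the assertion that every minimizer is centered does not follow.
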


By Lemma~\ref{lem:cusp}, it suffices to consider reduced shapes
with $y<2$. A quadratic Fourier estimate forces the centered
displacement on a strip; outside that strip,
Lemma~\ref{lem:four} gives strict comparisons with fixed centered
configurations. We first prove the displacement estimate.

\begin{lemma}\label{lem:centerstrip}
For
\[
 0\leq x\leq\frac{39}{100},\qquad
 \frac{23}{25}\leq y\leq2,
\]
the unique minimum of $J(x+iy;\cdot,\cdot)$ is attained at
$(1/2,1/2)$ modulo integer translations.
\end{lemma}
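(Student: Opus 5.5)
We have to show that, for $z=x+iy$ in the stated strip, the function $(a,b)\mapsto J(z;a,b)$ on the torus $\mathbb R^2/\mathbb Z^2$ has $(1/2,1/2)$ as its unique global minimizer. We work with the Fourier form
\[
 J(z;a,b)=\sum_{m,n}e^{-\pi Q_{m,n}}\cos\bigl(2\pi(ma+nb)\bigr),
\]
and split it into the four dominant pairs $\pm(1,0),\pm(0,1),\pm(1,1),\pm(1,-1)$ plus a tail. Set $u=2\pi a$ and $v=2\pi b$. With the notation $A,B,C,D$ of Lemma~\ref{lem:four}, the main part is
\[
 1+2\bigl[A\cos u+B\cos v+C\cos(u+v)+D\cos(u-v)\bigr].
\]
The tail is bounded in absolute value by $T=\sum_{(m,n)\text{ omitted}}e^{-\pi Q_{m,n}}$. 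On $y\ge 23/25$ and $y\le 2$, $T$ is tiny: every omitted $Q_{m,n}$ is at least about $\min(4y,\,4/y,\,\dots)\ge 2$, so $T=O(e^{-2\pi})$.

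\textbf{Step 1 (main part).} We minimize the main part exactly, as in Lemma~\ref{lem:four}. For fixed $t=\cos v$, minimizing over $u$ gives $Bt-\sqrt{A^2+(C-D)^2+2A(C+D)t+4CDt^2}$, and the minimizer $t_*=\max\{-1,\min\{1,t_0\}\}$ is unique by strict convexity. We show that on the strip $t_0\le -1$, i.e.\ $f'(-1)\ge 0$, which forces $\cos v=-1$. At $t=-1$ the radicand is $(A-C-D)^2$, so the minimizing $u$ satisfies $\cos u=-\operatorname{sgn}(A-C-D)$. We need $A>C+D$ there. This is where the restriction $x\le 39/100$ enters, since at $x=1/2$ one has $A=C$ and the rhombic degeneracy appears. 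The resulting minimum is $(u,v)=(\pi,\pi)$, that is, $(a,b)=(1/2,1/2)$.

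The condition $f'(-1)\ge0$ reads
\[
 B\ \ge\ \frac{A(C+D)-4CD}{|A-C-D|}.
\]
We verify it with interval bounds on a finite cover of the rectangle $[0,39/100]\times[23/25,2]$, in the style of the appendix certificates. The monotonicity of $A,B,C,D$ in $x$ and $y$ gives easy enclosures on each box.

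\textbf{Step 2 (quantitative gap).} Uniqueness must survive the tail, so a strict minimum of the main part alone is not enough. We need a quadratic lower bound: the main part minus its value at $(\pi,\pi)$ should be at least $c\bigl(\sin^2(u/2)+\sin^2(v/2)\bigr)$-type, with $c$ exceeding the tail's variation. The tail's variation is itself $O(T)$ times the same quadratic quantity, because each term's cosine difference from its value at $(\pi,\pi)$ is bounded by $m^2\sin^2(u/2)+n^2\sin^2(v/2)$ up to constants.

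To get this, we expand around $(\pi,\pi)$. The Hessian there is
\[
 2\begin{pmatrix}A-C-D & -(C-D)\\ -(C-D) & B-C-D\end{pmatrix}
\]
(signs to be checked). It is positive definite with explicit margin on the strip. Globally, the difference should be written as
\[
 2A(1+\cos u)+2B(1+\cos v)-2C\bigl(1-\cos(u+v)\bigr)-2D\bigl(1-\cos(u-v)\bigr).
\]
We then use $1-\cos(u\pm v)\le 2\bigl[(1+\cos u)+(1+\cos v)\bigr]$ to obtain the bound
\[
 \ge 2\bigl(A-2C-2D\bigr)(1+\cos u)+2\bigl(B-2C-2D\bigr)(1+\cos v).
\]
This crude route may fail near $x=39/100$, where $A$ is closest to $C$. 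The tail terms are handled the same way: for the tail, $\bigl|\cos(m u+nv)-\cos\bigl((m+n)\pi\bigr)\bigr|\le (m^2+n^2)\cdot 2\bigl[(1+\cos u)+(1+\cos v)\bigr]/\ldots$.

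\textbf{Main obstacle.} The hard part is the corner near $x=39/100$ with $y$ close to $23/25$. There the gap $A-C-D$ is smallest, and the crude inequality above loses a factor. In that corner I would instead use the sharper two-variable bound. One option is to use the exact minimization in $u$ from Lemma~\ref{lem:four}, giving an exact one-dimensional function of $t$, and bound $f(t)-f(-1)\ge f'(-1)(t+1)$ by convexity. The other is to rely on a box-by-box rigorous evaluation with the Lipschitz control from Lemma~\ref{lem:four}, comparing $f'(-1)(1+t)$ against the tail term $2T'(1+t)$.
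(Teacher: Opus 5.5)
Your framework is close to the paper's. You expand about $(1/2,1/2)$, bound the gain from below by something coercive in $1+\cos u$ and $1+\cos v$, and dominate the remainder via $1-\cos(mu'+nv')\le(|m|\sqrt{1-\cos u'}+|n|\sqrt{1-\cos v'})^2$ with $u'=u-\pi$, $v'=v-\pi$. Once that works, Step~1 is superfluous. The gap is that this coercive bound, which is the whole proof, is never established on the strip.

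\textbf{The explicit estimate fails on a band.} You symmetrize the cross term by $2\sqrt{XY}\le X+Y$, which requires $A>2(C+D)$. Since $A/C=e^{\pi(1-2x)/y}$, this already fails (because $A\le 2C$) wherever $x\ge\frac12-\frac{y\log 2}{2\pi}$. At $x=39/100$ it fails for every $y\ge1$; at $y=2$, for instance, $A\approx1.47\cdot10^{-3}$ and $C\approx1.04\cdot10^{-3}$. So the bad set is a band at larger heights, not the corner $y\approx23/25$ you single out; there in fact $A-2C-2D\approx1.7\cdot10^{-3}>0$.

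\textbf{Neither fallback repairs this.} The convexity bound $f(t)-f(-1)\ge f'(-1)(1+t)$ controls only $\min_u$ of the main part, so it is coercive in $1+\cos v$ alone. Your remainder bound, however, contains $\sum m^2e^{-\pi Q_{m,n}}(1+\cos u)$. That term can only be absorbed by the $u$-coercivity which passing to $\min_u$ discards. The box-by-box variant, comparing $f'(-1)(1+t)$ with $2T'(1+t)$, has the same defect.

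\textbf{An isotropic constant cannot work either.} $T$ is small in absolute terms but not relative to the $u$-gap. At $(x,y)=(39/100,2)$ the omitted pair $(0,\pm2)$ alone has weight $2e^{-2\pi}\approx3.7\cdot10^{-3}$, more than ten times $A-C-D\approx3.4\cdot10^{-4}$. So the comparison must be made separately in the two directions.

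\textbf{The missing step is to stop before the AM--GM.} Keeping $2|mn|\sqrt{(1+\cos u)(1+\cos v)}$ turns the bound into a quadratic form in $(\sqrt{1+\cos u},\sqrt{1+\cos v})$. The paper also exploits parity. In $J(a,b)-J(1/2,1/2)$, every pair with $m+n$ odd contributes $e^{-\pi Q_{m,n}}[1-\cos(mu'+nv')]\ge0$, so all of these except $\pm(1,0),\pm(0,1)$ are dropped. The terms $C$, $D$ and the even tail are then collected into the single matrix $H=2\operatorname{diag}(A,B)-\sum_{m+n\ \mathrm{even}}e^{-\pi Q_{m,n}}\bigl(\begin{smallmatrix}m^2&|mn|\\|mn|&n^2\end{smallmatrix}\bigr)$. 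Its positive definiteness is certified by Sylvester's criterion on a finite box cover (Lemma~\ref{lem:cert-strip}). It holds comfortably exactly where your diagonal-dominance condition fails: at $(x,y)=(39/100,2)$, $H_{11}\approx6.8\cdot10^{-4}$, $H_{22}\approx0.40$, and $H_{12}^2\approx5\cdot10^{-6}$.

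Equivalently, you could keep your route but replace $\lambda=1$ by a box-dependent weight in $2\sqrt{XY}\le\lambda X+\lambda^{-1}Y$. Such a $\lambda$ exists precisely when $H_{12}^2<H_{11}H_{22}$.
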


\begin{proof}
With $Q_{m,n}$ as in \eqref{eq:lattice-quadratic-form}, put
$u=2\pi(a-1/2)$ and $v=2\pi(b-1/2)$. Since
\[
 \cos\bigl(2\pi(ma+nb)\bigr)=(-1)^{m+n}\cos(mu+nv),
\]
we have the exact decomposition
\[
\begin{aligned}
 J(z;a,b)-J(z;1/2,1/2)
 ={}&\sum_{\substack{m,n\in\mathbb Z\\m+n\ \mathrm{odd}}}
 e^{-\pi Q_{m,n}}\bigl[1-\cos(mu+nv)\bigr]\\
 &-\sum_{\substack{m,n\in\mathbb Z\\m+n\ \mathrm{even}}}
 e^{-\pi Q_{m,n}}\bigl[1-\cos(mu+nv)\bigr].
\end{aligned}
\]
For integers $m,n$,
\[
 |e^{i(mu+nv)}-1|
 \leq|e^{imu}-1|+|e^{inv}-1|
 \leq |m|\,|e^{iu}-1|+|n|\,|e^{iv}-1|.
\]
Using $|e^{it}-1|^2=2(1-\cos t)$ gives
\[
 1-\cos(mu+nv)\leq
 \bigl(|m|\sqrt{1-\cos u}+|n|\sqrt{1-\cos v}\bigr)^2.
\]
Retaining $(m,n)=\pm(1,0),\pm(0,1)$ in the nonnegative odd sum
and applying this estimate to the even sum yields
\begin{align*}
 J(z;a,b)-J(z;1/2,1/2)\geq{}&
 \left(2e^{-\pi Q_{1,0}}-
 \sum_{\substack{m,n\in\mathbb Z\\m+n\ \mathrm{even}}}
 m^2e^{-\pi Q_{m,n}}\right)(1-\cos u)\\
 &+\left(2e^{-\pi Q_{0,1}}-
 \sum_{\substack{m,n\in\mathbb Z\\m+n\ \mathrm{even}}}
 n^2e^{-\pi Q_{m,n}}\right)(1-\cos v)\\
 &-2\left(\sum_{\substack{m,n\in\mathbb Z\\m+n\ \mathrm{even}}}
 |mn|e^{-\pi Q_{m,n}}\right)
 \sqrt{(1-\cos u)(1-\cos v)}.
\end{align*}
By Lemma~\ref{lem:cert-strip}, this quadratic form in
$\sqrt{1-\cos u}$ and $\sqrt{1-\cos v}$ is positive definite.
Hence the difference is strictly positive unless
\[
 \cos u=\cos v=1
 \quad\Longleftrightarrow\quad
 (a,b)\equiv(1/2,1/2)\pmod{\mathbb Z^2}.
\]
Periodicity gives equality for these displacements.
\end{proof}

\begin{proof}[Proof of Theorem~\ref{thm:centered}]
By Lemma~\ref{lem:cusp}, it suffices to consider reduced shapes with
$y<2$. For $7/40\leq\alpha\leq2/5$, the six comparisons in
Lemma~\ref{lem:cert-global} exclude $x\geq39/100$, since their
intervals cover $[7/40,2/5]$. On the remaining strip,
$y\geq\sqrt{1-(39/100)^2}>23/25$,
so Lemma~\ref{lem:centerstrip} forces the centered displacement.

For $2/5\leq\alpha\leq23/25$, Lemma~\ref{lem:cert-global}
excludes $x\geq3/10$ by comparison with the centered square.
On the remaining shapes,
$y\geq\sqrt{1-(3/10)^2}>23/25$, so
Lemma~\ref{lem:centerstrip} again applies. Every unrestricted
minimizer is therefore centered. The centered classification of
\cite[Theorem~2.1]{LW} gives the
asserted square phase and its uniqueness.
\end{proof}

The preceding comparisons apply from $\alpha=7/40$ onwards. To
connect them with the first transition, we use one centered rhombic
competitor on the two regions outside a small triangular neighborhood.

\begin{proposition}\label{prop:bridge}
Let $1723/10000\leq\alpha\leq7/40$. Every reduced shape in either region
\[
 \frac{39}{100}\leq x\leq\frac{19}{40},\quad y<2,
 \qquad\text{or}\qquad
 \frac{19}{40}\leq x\leq\frac12,\quad
 \frac{891}{1000}\leq y<2
\]
satisfies, for every displacement,
\[
 E_\alpha(x+iy;a,b)>
 E_\alpha\left(\frac{3+i\sqrt{55}}8;\frac12,\frac12\right).
\]
\end{proposition}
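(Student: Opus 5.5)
The plan is to compare a displacement-free lower bound with an upper bound for the fixed centered rhombic competitor, both affine in $\alpha$. First, the right-hand side. By the centered identity \eqref{eq:centered-identity},
\[
 E_\alpha\Big(\tfrac{3+i\sqrt{55}}8;\tfrac12,\tfrac12\Big)=(1-\alpha)\theta(1;z_*)+2\alpha\theta\Big(2;\tfrac{z_*+1}2\Big),\qquad z_*=\tfrac{3+i\sqrt{55}}8 .
\]
This is affine in $\alpha$. So we only need rigorous upper bounds for $\theta(1;z_*)$ and $\theta(2;(z_*+1)/2)$, obtained from truncated sums with geometric tail bounds as in Lemma~\ref{lem:cusp}. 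These give an explicit upper bound $U(\alpha)$, valid on $[1723/10000,7/40]$. Note $|z_*|=1$, so $z_*$ lies on the arc at angle roughly $\varphi_{\alpha_a}$, which is why it is a good competitor in this range.

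Second, the left-hand side. For every displacement, Lemma~\ref{lem:four} gives a lower bound $L_\alpha(x,y)$ that no longer depends on $(a,b)$. It is also affine in $\alpha$ for fixed $t$, and hence concave in $\alpha$ after the minimum over $t$. Since $U$ is affine, it therefore suffices to verify $L_\alpha(x,y)>U(\alpha)$ at the two endpoints $\alpha=1723/10000$ and $\alpha=7/40$, uniformly over the two shape regions. The hypotheses $A^2>4CD$ and $B^2>4CD$ hold on the reduced domain with $y\le2$, as noted after Lemma~\ref{lem:four}. The constraint $y\ge\sqrt{1-x^2}$ from $\overline{\mathcal D_{\mathcal G}}$ bounds the first region from below in $y$. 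The second region starts at $y\ge 891/1000$, which excludes a neighbourhood of $e^{i\pi/3}$, where the bound must fail because the triangular deep-hole configuration is competitive there.

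Third, the verification over shapes. I would cover each region by finitely many small rectangles in $(x,y)$. On each rectangle I would bound $L_\alpha$ from below by its value at one corner minus a Lipschitz correction, using the derivative enclosures from the cosine formulas in the proof of Lemma~\ref{lem:four}; these are valid across branch changes of the minimizing $t$. For $y$ near $2$ the bound is easy: $L_\alpha$ grows like $\sqrt y$, as in Lemma~\ref{lem:cusp}. These finite checks would be recorded as a certificate in the Appendix, in the style of Lemma~\ref{lem:cert-global}.

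The main obstacle is the part of the second region near $x=1/2$, $y\approx 0.9$, and the corner near $(19/40,\sqrt{1-(19/40)^2})$. There the margin $L_\alpha-U(\alpha)$ is small: the true energy is close to the competitor's, and truncating to four Fourier pairs plus the $|m|,|n|\le2$ remainder loses little but not nothing. If the margin fails, I would first refine the boxes. If that is not enough, I would keep more terms of the remainder, allowing $|m|,|n|\le3$ with coefficients $1-\alpha$, to recover the needed precision.
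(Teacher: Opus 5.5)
Your proposal is correct and follows the paper's own route. The paper deduces the proposition from the third comparison of Lemma~\ref{lem:cert-global}. That comparison applies the displacement-free bound \eqref{eq:lower} of Lemma~\ref{lem:four} on a bisected cover of clipped rectangles, using both corner-coefficient bounds and center-value-minus-Lipschitz bounds. It bounds the centered rhombic competitor above by truncated sums with ring tails, and uses affinity in $\alpha$ to reduce to the endpoints $1723/10000$ and $7/40$.

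The differences are cosmetic. The paper bounds the competitor by $\theta_4+\alpha J_4$ directly rather than through the centered identity. It applies the endpoint reduction to the exact energy difference, which is affine in $\alpha$ for each fixed configuration. It also runs the box cover all the way up to $y=2$. Your remark that the bound "grows like $\sqrt y$" near $y=2$ is not accurate for the truncated bound \eqref{eq:lower}. That step should rest on the box cover itself, or on the separate estimate from Lemma~\ref{lem:cusp}.
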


\begin{proof}
The third comparison in Lemma~\ref{lem:cert-global} gives the
asserted inequality uniformly on both regions.
\end{proof}

\section{Comparison with $E_\alpha(e^{i\pi/3};\frac13,\frac13)$}\label{sec:triangular}

The comparisons in Section~\ref{sec:centered} leave a neighborhood
of the triangular lattice. In this section, we compare every
configuration in this neighborhood with the triangular deep-hole
configuration for $\alpha\in[4/25,7/40]$, covering $\alpha_a$.

\begin{theorem}[Comparison in a triangular neighborhood]
\label{cor:hex-cap-comparison}
If
\[
 \frac{19}{40}\le x\le\frac12,\qquad
 \frac{\sqrt3}{2}\le y\le \frac{\sqrt3}{2}+\frac1{40},\qquad
 \frac4{25}\le\alpha\le\frac7{40},
\]
then, for every $(a,b)\in\mathbb R^2/\mathbb Z^2$,
\[
 E_\alpha(x+iy;a,b)\ge
 E_\alpha\left(\frac12+\frac{i\sqrt3}{2};\frac13,\frac13\right).
\]
Equality holds if and only if $x=1/2$, $y=\frac{\sqrt3}{2}$, and
$(a,b)\equiv\pm(1/3,1/3)\pmod{\mathbb Z^2}$.
\end{theorem}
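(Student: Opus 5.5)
The proof follows the outline in the introduction: localize the displacement, then use joint convexity in $p=(x,y,a,b)$ about the stationary point $p_0=(\tfrac12,\tfrac{\sqrt3}{2},\tfrac13,\tfrac13)$.

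\emph{Step 1: gradient at $p_0$.} We first check that $\nabla E_\alpha(p_0)=0$ for every $\alpha$.
\begin{itemize}
\item The derivatives in $(x,y)$ vanish because $\theta(1;\cdot)$ is stationary at $e^{i\pi/3}$, and because $J(\cdot;\tfrac13,\tfrac13)$ is invariant under the order-six rotation symmetry of the triangular lattice fixing the deep hole. That symmetry combines the relations \eqref{Eq21} with the displacement maps in \eqref{Eq22}, so $e^{i\pi/3}$ is an elliptic fixed point and the shape gradient vanishes there.
\item The derivatives in $(a,b)$ vanish because $(1/3,1/3)$ is fixed, modulo $\mathbb Z^2$, by the rotation of order three acting on displacements. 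A gradient invariant under a nontrivial rotation must be zero.
\end{itemize}

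\emph{Step 2: displacement localization.} For $(x,y)$ in the cap, only displacements near $\pm(1/3,1/3)$ can be relevant. We apply the Fourier lower bound of Lemma~\ref{lem:four}, restricted to displacements outside a small neighbourhood $U$ of the two deep holes. This should give
\[
E_\alpha(x+iy;a,b)>E_\alpha(e^{i\pi/3};\tfrac13,\tfrac13)\qquad\text{for }(a,b)\notin U .
\]
We verify this inequality with the certified finite-sum estimates on boxes, using the Lipschitz derivative bounds from the proof of Lemma~\ref{lem:four}.

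Inside $U$, we use $E_\alpha(z;1-a,1-b)=E_\alpha(z;a,b)$ to reduce to a neighbourhood of $(1/3,1/3)$ itself. Together with integer translations, this places $p$ in a convex prism $P$ of the form
\[
P=[\tfrac{19}{40},\tfrac12]\times[\tfrac{\sqrt3}{2},\tfrac{\sqrt3}{2}+\tfrac1{40}]\times V ,
\]
where $V$ is a convex neighbourhood of $(1/3,1/3)$. This is the content I expect of Lemma~\ref{lem:hex-shift-localization}.

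\emph{Step 3: convexity on the prism.} This is the main obstacle. We must show $D^2E_\alpha\succ0$ on all of $P$, uniformly for $\alpha\in[4/25,7/40]$. Since $E_\alpha$ is affine in $\alpha$, it suffices to check the two endpoint values $\alpha=4/25$ and $\alpha=7/40$, because positive definiteness is preserved under convex combinations.

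For each endpoint we proceed as follows.
\begin{enumerate}
\item Differentiate the lattice sum termwise.
\item Truncate to $|m|,|n|\le N$, bounding the tail with Gaussian estimates.
\item Subdivide $P$ into small boxes.
\item On each box, bound the $4\times4$ Hessian entries by their value at the box centre plus a third-derivative Lipschitz error.
\item Conclude $D^2E_\alpha\ge cI_4$ on each box, certified for instance via Gershgorin discs or leading principal minors.
\end{enumerate}

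The delicate direction is the shape–displacement coupling. The pure shape Hessian of $\theta$ at $e^{i\pi/3}$ is positive, but only moderately so. The off-diagonal blocks $J_{xa},J_{ya},\dots$ must be controlled by the displacement block $\alpha D^2_{(a,b)}J$, which is small when $\alpha$ is small. I would first try a Schur-complement check at $p_0$, and only then extend it to boxes with margins.

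\emph{Step 4: conclusion.} By Taylor's formula with integral remainder along the segment from $p_0$ to $p$, which stays in the convex set $P$,
\[
E_\alpha(p)-E_\alpha(p_0)=\int_0^1(1-t)(p-p_0)^{\mathsf T}D^2E_\alpha\bigl(p_0+t(p-p_0)\bigr)(p-p_0)\,dt>0
\]
for every $p\neq p_0$, since $\nabla E_\alpha(p_0)=0$. This gives the inequality. Equality forces $p=p_0$, that is, $x=\tfrac12$, $y=\tfrac{\sqrt3}{2}$ and $(a,b)\equiv\pm(1/3,1/3)$ after undoing the reductions of Step 2.
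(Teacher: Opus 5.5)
Your Steps 1 and 4, and the reduction of Step 3 to $\alpha=4/25$ and $\alpha=7/40$, match the paper. The gap is Step 2, and it also undermines the domain you use in Step 3.

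\emph{Lemma~\ref{lem:four} cannot localize the displacement.} It cannot show $E_\alpha(x+iy;a,b)>E_\alpha(e^{i\pi/3};\frac13,\frac13)$ near the triangular shape for \emph{any} displacement.
\begin{itemize}
\item Its bound is independent of $(a,b)$, because it minimizes over the whole torus.
\item It gives every omitted term the coefficient $1-\alpha$. The second-shell pairs $\pm(2,1),\pm(1,2)$ carry the factor $1+\alpha$ at the deep holes.
\item Hence at $z=e^{i\pi/3}$ the bound falls short of the target by at least $8\alpha e^{-2\sqrt3\pi}\approx2.5\cdot10^{-5}$, and by continuity it stays short nearby.
\end{itemize}
A version restricted to $(a,b)\notin U$ is a new estimate that you have not supplied. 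With this remainder treatment it forces $U$ to have radius of a few hundredths, since the true excess at distance $r$ from the minimizing displacement is only about $\frac{\alpha}{2}\lambda_{\min}(D^2_{a,b}J)\,r^2\approx0.09\,r^2$.

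\emph{The minimizing displacement moves with the shape.} By \eqref{eq:hex-cap-affine-centers}, $a_0\approx\frac13-(x-\frac12)-\frac{29}{50}(y-\frac{\sqrt3}{2})$ and $b_0\approx\frac13+\frac{29}{25}(y-\frac{\sqrt3}{2})$. This shifts by about $0.04$ in $a$ and $0.03$ in $b$ across the cap. Your product prism uses a fixed $V\supseteq U$, so for shapes near the corners of the cap it contains displacements roughly $0.05$ away from that shape's minimizer. At such points $D^2E_\alpha\succ0$ is doubtful, not merely delicate. Positivity rests on the shape Schur complement. In a three-term Fourier model this is only about $(0.63-2.4\alpha)I_2\approx0.2\,I_2$ at $p_0$ for $\alpha=7/40$, and it drops to a few hundredths at such deviations. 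So Steps 2 and 3 pull in opposite directions: Step 2 wants $U$ large, while Step 3 wants a thin displacement region aligned with the moving minimizer.

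The paper avoids this by never excluding displacements energetically.
\begin{itemize}
\item Since $\alpha>0$, the displacement of a global minimizer of $E_\alpha$ over the cap times the torus minimizes $J(x+iy;\cdot)$. Only that minimizer is localized.
\item The exact phasor identity \eqref{eq:hex-phasor}, together with $\operatorname{osc}_{a,b}\mathcal R<27/100000$, forces $|P+Qe^{iU}+Re^{-iV}|$ to be small at the minimizer.
\item Then $D^2_{a,b}J\ge\frac{13}{20}I_2$ and $\|\nabla_{a,b}\mathcal R\|_2<1/625$ put it within $4/1625$ of the shape-dependent center $(a_0,b_0)$.
\item This yields the thin sheared prism \eqref{eq:hex-affine-prism}, which follows the center, and the joint Hessian is certified only there (Lemma~\ref{lem:cert-hessians}).
\end{itemize}
To repair your argument, replace Step 2 by this minimizer-based localization and your product prism by the sheared one. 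Your Step 4 then applies verbatim to the minimizer. Expanding about the stationary point $p_0$ even avoids the paper's interior/boundary case split.
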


Three retained Fourier pairs localize the displacement minimizers
near the two deep holes. Positivity of the joint Hessian then gives
the energy comparison and its equality cases. The first step is
the following localization lemma.

\begin{lemma}[Localization of the displacement]
\label{lem:hex-shift-localization}
Let $19/40\le x\le1/2$, and
$\frac{\sqrt3}{2}\le y\le \frac{\sqrt3}{2}+1/40$. Every global minimizer of $J(x+iy;\cdot)$
has, after simultaneous inversion of its displacement if necessary,
a representative satisfying
\[
\begin{aligned}
 a&=\frac13-\left(x-\frac12\right)-\frac{29}{50}\left(y-\frac{\sqrt3}{2}\right)+r_a,\\
 b&=\frac13+\frac{29}{25}\left(y-\frac{\sqrt3}{2}\right)+r_b,
\end{aligned}
\qquad
 |r_a|<\frac3{400},\quad |r_b|<\frac3{500}.
\]
\end{lemma}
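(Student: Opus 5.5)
Near $z_0=e^{i\pi/3}$ I will replace $J(x+iy;\cdot)$ by the contribution of the three shortest pairs of dual vectors, locate the minimizers of that model explicitly, and then show that the remaining terms move them only slightly.

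\textbf{Three-cosine model.} Write $u=2\pi a$ and $v=2\pi b$. Near $z_0$, the pairs with the smallest $Q_{m,n}$ are $\pm(1,0)$, $\pm(0,1)$ and $\pm(1,1)$; at $z_0$ each of them has $Q_{m,n}=2/\sqrt3$. Set
\[
 A=e^{-\pi Q_{1,0}},\qquad B=e^{-\pi Q_{0,1}},\qquad C=e^{-\pi Q_{1,1}}.
\]
The model function is
\[
 g(u,v)=2\bigl[A\cos u+B\cos v+C\cos(u+v)\bigr],
\]
and the remainder is $R=J-g$. Every omitted pair has $Q_{m,n}\ge Q_{1,-1}$, which is about $2\sqrt3$ near $z_0$. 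Hence $R$ and its $u,v$-gradient are bounded by roughly $e^{-2\sqrt3\pi}$, which is tiny compared with $A,B,C\approx e^{-2\pi/\sqrt3}$.

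\textbf{Minimizers of the model.} I will minimize $g$ in closed form. This is the classical three-phasor problem: $A e^{iu}+B\,e^{-iv}\cdot(\cdots)$ forms a triangle, and when the triangle inequalities hold with room to spare the minimizers are a pair $\pm(u^*,v^*)$. These are determined by the Fermat-type conditions
\[
 A\sin u=B\sin v=-C\sin(u+v)\ \text{(up to sign)},
\]
equivalently by the angles of the triangle with side lengths $1/A$, $1/B$, $1/C$. At $z_0$ we have $A=B=C$ and $(u^*,v^*)=\pm(2\pi/3,2\pi/3)$, which gives the deep holes $\pm(1/3,1/3)$.

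Next I linearize. I differentiate $(a^*,b^*)=(u^*,v^*)/(2\pi)$ in $x$ and $y$ at $z_0$, using
\[
 \partial Q_{1,0}=\bigl(2x/y,\ 1-x^2/y^2\bigr),\qquad \partial Q_{0,1}=\bigl(0,\,-1/y^2\bigr)
\]
and the analogous gradient of $Q_{1,1}$. This should produce the coefficients $-1$, $-29/50$ and $29/25$ (approximately). The stated affine formula is therefore the tangent map. I then bound the second-order Taylor remainder of the explicit formula over the small box $|x-1/2|\le1/40$, $|y-\sqrt3/2|\le1/40$. With second derivatives of size $O(1)$, this remainder is $O(10^{-3})$. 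I also need the discrepancy between the stated rational coefficients and the exact derivatives, which contributes at most about $10^{-3}$.

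\textbf{Perturbation by $R$ and global uniqueness.} First I show that every global minimizer of $J$ lies near a model minimizer. Outside balls of radius $\rho$ around $\pm(u^*,v^*)$, the model satisfies $g-\min g\ge c\rho^2$ by the positive-definite Hessian of $g$. This must beat $2\sup|R|$. The same argument rules out the other critical points of $g$ (the saddles and the maximum), since each has an energy gap of order $A$.

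Inside each ball, $\nabla J=0$ gives
\[
 |(u,v)-(u^*,v^*)|\le\|(D^2g)^{-1}\|\,\sup|\nabla R|,
\]
which is of order $e^{-\pi(2\sqrt3-2/\sqrt3)}\approx e^{-7.26}\approx7\times10^{-4}$ in $(u,v)$, and hence far smaller in $(a,b)$. The symmetry $(a,b)\mapsto(1-a,1-b)$ from Proposition~\ref{Prop21} swaps the two balls; this is the "simultaneous inversion" in the statement. Adding the three error contributions (linearization, rational rounding, remainder) must stay below $3/400$ and $3/500$, and I would check this with interval bounds of the kind collected in the appendix.

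\textbf{Main obstacle.} The main difficulty is the quantitative second-order control of the explicit model minimizer $(u^*,v^*)(x,y)$ over the whole box, together with the rounding of the coefficients. The $y$-box is one-sided, and the coefficient errors are multiplied by $1/40$, so the budget is tight: roughly $3/400-1/40\cdot|\text{coefficient error}|$. My first attempt is to bound the Hessian of $(u^*,v^*)$ in $(x,y)$ by the implicit function theorem on the system $\nabla_{u,v}g=0$, with interval arithmetic on $D^2g$ and $D^3g$ over the box. If that is too lossy, I will split the box into subboxes.
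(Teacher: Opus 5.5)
Your plan is essentially the paper's proof. The paper also keeps the pairs $\pm(1,0),\pm(0,1),\pm(1,1)$ and writes the model exactly as a constant plus $|P+Qe^{iU}+Re^{-iV}|^2$, where $P:Q:R=C^{-1}:B^{-1}:A^{-1}$ is your triangle with sides $1/A,1/B,1/C$. It then localizes coarsely through $\operatorname{osc}_{a,b}\mathcal R$, finely through a displacement-Hessian lower bound together with $\sup\|\nabla_{a,b}\mathcal R\|_2$, and certifies the affine enclosure of the model centers by subbox evaluation, which is your fallback. Your tangent coefficients ($-1,-1/\sqrt3$ for $a$ and $0,2/\sqrt3$ for $b$) are exactly what $-1,-29/50$ and $0,29/25$ approximate.

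The only correction is quantitative. The uniform bound you actually write, $\|(D^2g)^{-1}\|\sup|\nabla R|$, is between $10^{-3}$ and $2.5\cdot10^{-3}$ in $(a,b)$, not of order $10^{-4}$. The reason is that the second shell has three pairs and $\sup|\nabla_{u,v}R|\approx 2(\sqrt2+2\sqrt5)e^{-2\sqrt3\pi}$; the paper certifies $4/1625$. Meanwhile the model center already deviates from the affine formula by about $3.47\cdot10^{-3}$ in $b$ at $(19/40,\sqrt3/2)$, because $\partial_x^2b_0\approx11$ there. So the tight part of the $3/500$ budget is the $x$-curvature of $b_0$, not the $y$-direction or the coefficient rounding. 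This leaves almost no slack for a single Taylor bound with uniform Hessian enclosures, which is why the paper certifies $(a_0,b_0)$ on a $32\times32$ grid of subboxes.
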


\begin{proof}
With $A,B,C$ as in Lemma~\ref{lem:four} and $Q_{m,n}$ as in
\eqref{eq:lattice-quadratic-form}, put
\[
 P=\sqrt{AB/C},\qquad Q=\sqrt{AC/B},\qquad R=\sqrt{BC/A}.
\]
For $U=2\pi a$ and $V=2\pi b$, define
\[
 \mathcal R(a,b)=
 \sum_{\substack{(m,n)\in\mathbb Z^2\\
 (m,n)\notin\{(0,0),\,\pm(1,0),\,\pm(0,1),\,\pm(1,1)\}}}
 e^{-\pi Q_{m,n}}\cos(mU+nV).
\]
The exact decomposition is
\begin{align}
 J(x+iy;a,b)
 &=1+2\{A\cos U+B\cos V+C\cos(U+V)\}+\mathcal R(a,b)\\
 &=1-P^2-Q^2-R^2+|P+Qe^{iU}+Re^{-iV}|^2
       +\mathcal R(a,b).\label{eq:hex-phasor}
\end{align}
The strict triangle inequalities for $P,Q,R$, verified in
Lemma~\ref{lem:cert-localization}, imply that the last square vanishes
at exactly two points on the displacement torus.
Denote them by $\pm(a_0,b_0)$, choosing
$U_0=2\pi a_0$, $V_0=2\pi b_0$ in $(0,\pi)$. They are
determined by the exact identities
\begin{equation}\label{eq:hex-phasor-centers}
\begin{aligned}
 \cos U_0&=\frac{R^2-P^2-Q^2}{2A},&
 \cos V_0&=\frac{Q^2-P^2-R^2}{2B},\\
 \cos(U_0+V_0)&=\frac{P^2-Q^2-R^2}{2C}.
\end{aligned}
\end{equation}
Indeed, the first two equations determine the angles in $(0,\pi)$;
the triangle identities give $Q\sin U_0=R\sin V_0$ and
$P+Q\cos U_0+R\cos V_0=0$.

For fixed $(x,y)$, set
\[
 \operatorname{osc}_{a,b}\mathcal R
 :=\max_{(a,b)\in[0,1]^2}\mathcal R(a,b)
   -\min_{(a,b)\in[0,1]^2}\mathcal R(a,b),
\]
\[
 \nabla_{a,b}\mathcal R=(\partial_a\mathcal R,\partial_b\mathcal R),
 \qquad
 \|\nabla_{a,b}\mathcal R\|_2
 :=\sqrt{|\partial_a\mathcal R|^2+|\partial_b\mathcal R|^2}.
\]
Lemma~\ref{lem:cert-localization} gives
\begin{equation}\label{eq:hex-cap-tail}
 \operatorname{osc}_{a,b}\mathcal R<\frac{27}{100000},\qquad
 \sup_{(a,b)\in[0,1]^2}\|\nabla_{a,b}\mathcal R(a,b)\|_2<\frac1{625},
\end{equation}
together with
\begin{equation}\label{eq:hex-cap-coarse}
\begin{gathered}
 A>\frac1{40},\quad B>\frac1{38},\quad C>\frac1{44},\qquad
 P<\frac{19}{100},\quad Q<\frac16,\quad R<\frac7{40},\\
 -\frac{16}{25}<\cos U_0<0,\qquad
 -\frac23<\cos V_0<0,\qquad \cos(U_0+V_0)<-\frac29.
\end{gathered}
\end{equation}
Lemma~\ref{lem:cert-localization} establishes these bounds uniformly
on the cap. In particular, $U_0,V_0\in(\pi/2,3\pi/4)$.

Let $(a,b)$ minimize $J$ at the fixed shape. Its existence follows
from continuity on the compact torus. Comparison with $(a_0,b_0)$
and \eqref{eq:hex-phasor} gives
\[
 |P+Qe^{iU}+Re^{-iV}|^2
 \le\mathcal R(a_0,b_0)-\mathcal R(a,b)
 <\frac{27}{100000}.
\]
Hence
\[
 |P+Qe^{iU}+Re^{-iV}|<\sqrt{\frac{27}{100000}}<\frac{33}{2000}.
\]
The triangle inequality implies
$\big||P+Qe^{iU}|-R\big|<\frac{33}{2000}$; squaring and using
\eqref{eq:hex-phasor-centers} gives
\[
 |\cos U-\cos U_0|
 <\frac{33(2R+33/2000)}{4000A}.
\]
The other two pairings give, in the same way,
\begin{equation}\label{eq:hex-angular-errors}
 \begin{aligned}
 |\cos V-\cos V_0|
 &<\frac{33(2Q+33/2000)}{4000B},\\
 |\cos(U+V)-\cos(U_0+V_0)|
 &<\frac{33(2P+33/2000)}{4000C}.
 \end{aligned}
\end{equation}
The elementary inequalities
\[
\begin{aligned}
 \frac{\sqrt{369}}{25}\sin\frac{7\pi}{125}
 -\frac{16}{25}\left(1-\cos\frac{7\pi}{125}\right)
 &>\frac{33}{100}\left(\frac7{20}+\frac{33}{2000}\right),\\
 \frac{\sqrt5}{3}\sin\frac{27\pi}{500}
 -\frac23\left(1-\cos\frac{27\pi}{500}\right)
 &>\frac{627}{2000}\left(\frac13+\frac{33}{2000}\right)
\end{aligned}
\]
and monotonicity of cosine on $[0,\pi]$ imply
\[
 |a\mp a_0|<\frac7{250},\qquad |b\mp b_0|<\frac{27}{1000},
\]
with the signs initially chosen independently.

The last inequality in \eqref{eq:hex-angular-errors}, with
\eqref{eq:hex-cap-coarse}, gives
\[
 \cos(U+V)<-\frac29+\frac{363}{1000}
                   \left(\frac{19}{50}+\frac{33}{2000}\right)<0.
\]
If the
signs were opposite, the representative of $U+V$ closest to zero
would satisfy
\[
 |U+V|<\frac\pi4+2\pi\left(\frac7{250}+\frac{27}{1000}\right)
       =\frac{9\pi}{25}<\frac\pi2,
\]
because $U_0,V_0\in(\pi/2,3\pi/4)$. Its cosine would then be
positive, a contradiction. After simultaneous inversion, we may
therefore suppose that
\[
 |a-a_0|<\frac7{250},\qquad |b-b_0|<\frac{27}{1000}.
\]

The displacement Hessian satisfies the uniform estimate
\begin{equation}\label{eq:hex-cap-shift-convexity}
 D^2_{a,b}J(x+iy;a,b)\ge\frac{13}{20}I_2
 \quad\text{when }|a-a_0|,|b-b_0|\le\frac3{100}.
\end{equation}
This is the displacement-Hessian assertion of
Lemma~\ref{lem:cert-hessians}. Set $v=(a-a_0,b-b_0)^{\mathsf T}$.
The segment $(a_0,b_0)+tv$, $0\le t\le1$, lies in the displayed
rectangle. Since $\nabla_{a,b}J(a,b)=0$ and
$\nabla_{a,b}J(a_0,b_0)=\nabla_{a,b}\mathcal R(a_0,b_0)$,
\[
\begin{aligned}
 \frac{13}{20}\|v\|_2^2
 &\le\int_0^1 v^{\mathsf T}
 D^2_{a,b}J\bigl((a_0,b_0)+tv\bigr)v\,dt\\
 &=-v\cdot\nabla_{a,b}\mathcal R(a_0,b_0)
 \le\|v\|_2\|\nabla_{a,b}\mathcal R(a_0,b_0)\|_2
 <\frac{\|v\|_2}{625}\qquad(v\ne0).
\end{aligned}
\]
Consequently,
\begin{equation}\label{eq:hex-cap-shift-distance}
 \|(a,b)-(a_0,b_0)\|_2<\frac4{1625}.
\end{equation}
Lemma~\ref{lem:cert-localization} also gives the affine enclosures
\begin{equation}\label{eq:hex-cap-affine-centers}
\begin{aligned}
 \left|a_0-\left\{\frac13-\left(x-\frac12\right)
                   -\frac{29}{50}\left(y-\frac{\sqrt3}{2}\right)\right\}\right|
 &\le\frac1{200},\\
 \left|b_0-\left\{\frac13+\frac{29}{25}\left(y-\frac{\sqrt3}{2}\right)\right\}\right|
 &\le\frac7{2000}.
\end{aligned}
\end{equation}
Combining
\eqref{eq:hex-cap-shift-distance} and
\eqref{eq:hex-cap-affine-centers} proves
\[
 |r_a|<\frac1{200}+\frac4{1625}<\frac3{400},\qquad
 |r_b|<\frac7{2000}+\frac4{1625}<\frac3{500}.
\]
\end{proof}

\begin{lemma}[Positive definiteness of the joint Hessian]
\label{lem:hex-convexity}
For $4/25\le\alpha\le7/40$, the Hessian of $E_\alpha$ in the
real variables $(x,y,a,b)$ is positive definite on the affine prism
\begin{equation}\label{eq:hex-affine-prism}
\begin{gathered}
 \frac{19}{40}\le x\le\frac12,\qquad \frac{\sqrt3}{2}\le y\le \frac{\sqrt3}{2}+\frac1{40},\\
 a=\frac13-\left(x-\frac12\right)-\frac{29}{50}\left(y-\frac{\sqrt3}{2}\right)+r_a,
 \qquad b=\frac13+\frac{29}{25}\left(y-\frac{\sqrt3}{2}\right)+r_b,\\
 |r_a|\le\frac3{400},\qquad |r_b|\le\frac3{500}.
\end{gathered}
\end{equation}
\end{lemma}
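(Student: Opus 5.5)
We prove positive definiteness of the $4\times4$ Hessian $H=D^2E_\alpha$ on the prism \eqref{eq:hex-affine-prism} by a perturbative argument about the triangular deep-hole point $p_0=(\frac12,\frac{\sqrt3}{2},\frac13,\frac13)$, combined with a rigorous enclosure of all second derivatives on the prism. Since $E_\alpha=\theta(1;z)+\alpha J(z;a,b)$ is affine in $\alpha$, we have $H=D^2\theta+\alpha D^2J$, where $D^2\theta$ has nonzero entries only in the $(x,y)$ block. It therefore suffices to check positive definiteness at the two endpoints $\alpha=4/25$ and $\alpha=7/40$: if $H_{4/25}\succ0$ and $H_{7/40}\succ0$ at a point, then every convex combination is also $\succ0$. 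This removes $\alpha$ from the problem.

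\emph{Step 1: the matrix at $p_0$.} We compute $H(p_0)$ explicitly. By the hexagonal symmetry, $\theta_{xx}(1;e^{i\pi/3})=\theta_{yy}(1;e^{i\pi/3})$ and $\theta_{xy}=0$ there. Write the $J$ block in terms of the nearest Fourier pairs, with $A=B=C=e^{-\pi\cdot 2/\sqrt3}$ at $p_0$ and phases $U=V=2\pi/3$, $U+V=4\pi/3$. Each of these pairs contributes $\cos$-factors equal to $-\tfrac12$. The $(a,b)$ block is then approximately $4\pi^2A\begin{pmatrix}1&\tfrac12\\ \tfrac12&1\end{pmatrix}$, which is positive definite with a margin (compare the constant $13/20$ in \eqref{eq:hex-cap-shift-convexity}). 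The mixed $(x,y)$–$(a,b)$ entries come from $\partial_x\partial_a$ and similar derivatives of $e^{-\pi Q_{m,n}}\cos(2\pi(ma+nb))$, which involve $\sin$ of the phases. The $(x,y)$ block of $\alpha J$ is negative, since the shift pushes away from the triangular shape, but for $\alpha\le 7/40$ it is dominated by $\theta_{xx}(1;e^{i\pi/3})$. We verify $H(p_0)\succ0$ through the Schur complement: the $(a,b)$ block is $\succ0$, and
\[
 D^2_{xy}E_\alpha-M\,(D^2_{ab}\alpha J)^{-1}M^{\mathsf T}\succ0,
\]
where $M$ is the mixed block. The smallest eigenvalue $\lambda_0$ gets an explicit rational lower bound from finite sums over $|m|,|n|\le N$ together with a Gaussian tail bound, as in Appendix~\ref{app:certificates}.

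\emph{Step 2: uniform enclosure on the prism.} The prism has small diameter: $|x-\frac12|\le\frac1{40}$, $|y-\frac{\sqrt3}{2}|\le\frac1{40}$, and $a,b$ within roughly $0.03$ of $\frac13$. We bound the third derivatives of $E_\alpha$ uniformly on a box containing the prism, again using truncated lattice sums and tails. Call this bound $K$. Then every entry of $H(p)-H(p_0)$ is at most $K\|p-p_0\|$, and Weyl's inequality gives $\lambda_{\min}(H(p))\ge\lambda_0-4K\,\mathrm{diam}$. If this margin is too thin, we subdivide the prism in $(x,y,r_a,r_b)$ into small boxes. On each box we enclose every Hessian entry by an interval using interval arithmetic, with a uniform exponential tail bound for $|m|,|n|>N$, and then certify positive definiteness of the interval matrix. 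For the interval matrix we use Gershgorin's theorem after an explicit congruence that diagonalizes the center matrix, or equivalently positivity of the leading minors with interval bounds. The affine parametrization is what keeps these boxes few: it follows the moving minimizer $(a_0,b_0)$ found in Lemma~\ref{lem:hex-shift-localization}, so the displacement coordinates vary over a set of width only about $10^{-2}$.

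\emph{Main obstacle.} The hard part is the $(x,y)$ directions, where the margin is small. The shift term $\alpha J$ has a negative shape curvature that partly cancels $\theta$'s curvature, and the coupling $M$ further reduces the Schur complement. Near $\alpha=7/40$, just above the transition $\alpha_a\approx0.1724$, the Schur complement may be small. I would therefore first compute it numerically at the corner $(x,y)=(\frac{19}{40},\frac{\sqrt3}{2}+\frac1{40})$ with $\alpha=7/40$ to size the margin. That determines the subdivision mesh, and the truncation level $N$ must be chosen so that the tail error stays well below this margin.
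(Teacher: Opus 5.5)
Your proposal is correct and is essentially the paper's proof: affine dependence on $\alpha$ reduces the claim to $\alpha=4/25$ and $\alpha=7/40$, and at those endpoints the paper (Lemma~\ref{lem:cert-hessians}) carries out exactly your Step~2 fallback---truncation at $|m|,|n|\le4$ with a Gaussian tail enlargement of every entry, cyclic bisection of the prism in $(x,y,r_a,r_b)$, congruence by the midpoint matrix's eigenbasis scaled by the inverse square roots of its eigenvalues, and strict diagonal dominance---so your Step~1 Schur-complement and Weyl estimate at $p_0$ serves only to size the margin. One small slip in Step~1: the nearest-shell part of $D^2_{a,b}J$ at $p_0$ is $4\pi^2A\left(\begin{smallmatrix}2&1\\1&2\end{smallmatrix}\right)$, twice what you wrote, and only this correct value is consistent with the bound $13/20$.
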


\begin{proof}
For $Q_{m,n}$ from \eqref{eq:lattice-quadratic-form}, the Hessian of
$e^{-\pi Q_{m,n}}[1+\alpha\cos(2\pi(ma+nb))]$ has the following
blocks, corresponding to $(x,y)$ and $(a,b)$:
\begin{equation}\label{eq:hex-joint-hessian}
\begin{aligned}
 H_{11}&=e^{-\pi Q_{m,n}}[1+\alpha\cos(2\pi(ma+nb))]\\
 &\qquad\times[\pi^2\nabla Q_{m,n}(\nabla Q_{m,n})^{\mathsf T}
                   -\pi D^2Q_{m,n}],\\
 H_{12}&=2\alpha\pi^2e^{-\pi Q_{m,n}}
 \sin(2\pi(ma+nb))\nabla Q_{m,n}\begin{pmatrix}m&n\end{pmatrix},\\
 H_{22}&=-4\alpha\pi^2e^{-\pi Q_{m,n}}
 \cos(2\pi(ma+nb))\begin{pmatrix}m^2&mn\\mn&n^2\end{pmatrix},
\end{aligned}
\end{equation}
where the derivatives of $Q_{m,n}$ are taken in $(x,y)$:
\[
 \nabla Q_{m,n}=\left(\frac{2m(mx-n)}y,
                 m^2-\frac{(mx-n)^2}{y^2}\right)^{\mathsf T},
\]
\[
 D^2Q_{m,n}=
 \begin{pmatrix}
 2m^2/y&-2m(mx-n)/y^2\\
 -2m(mx-n)/y^2&2(mx-n)^2/y^3
 \end{pmatrix}.
\]
The series of these Hessians converges uniformly on the prism;
the tail estimate is given in \eqref{eq:hex-joint-hessian-tail}.
Lemma~\ref{lem:cert-hessians} proves the endpoint estimates.
On \eqref{eq:hex-affine-prism}, therefore,
\[
 D^2E_\alpha
 =\frac{7/40-\alpha}{7/40-4/25}D^2E_{4/25}
  +\frac{\alpha-4/25}{7/40-4/25}D^2E_{7/40}\succ0
 \qquad\left(\frac4{25}\le\alpha\le\frac7{40}\right).
\]
\end{proof}

\begin{proof}[Proof of Theorem~\ref{cor:hex-cap-comparison}]
Fix $\alpha\in[4/25,7/40]$, and regard $E_\alpha$ as a function
of the four real variables $(x,y,a,b)$.
It attains its minimum on the closed shape rectangle in the statement
times the displacement torus. Choose such a minimizing configuration
$p=(x,y,a,b)$. Since $\alpha>0$, its displacement minimizes $J$
at the fixed shape. Lemma~\ref{lem:hex-shift-localization} places
$p$, after simultaneous displacement inversion and integer
translations, in the affine prism \eqref{eq:hex-affine-prism}.
This prism is convex and contains
\[
 p_0=\left(\frac12,\frac{\sqrt3}{2},\frac13,\frac13\right).
\]

The triangular configuration is stationary in all four variables.
Indeed, the joint modular symmetry and integer periodicity give
\[
 E_\alpha\circ\Phi=E_\alpha,\qquad
 \Phi(z;a,b)=\left(1-\frac1z;1-a-b,a\right),\qquad
 \Phi(p_0)=p_0.
\]
At $p_0$, the shape derivative is multiplication by
$e^{-2\pi i/3}$, and the displacement derivative is
$\left(\begin{smallmatrix}-1&-1\\1&0\end{smallmatrix}\right)$.
Thus $\det(I-D\Phi(p_0))=9$, and differentiation yields
\[
 (I-D\Phi(p_0)^{\mathsf T})\nabla E_\alpha(p_0)=0,
 \qquad \nabla E_\alpha(p_0)=0.
\]

Suppose that $p\ne p_0$. If the minimum is attained in the
interior of the shape rectangle, then $\nabla E_\alpha(p)=0$.
Lemma~\ref{lem:hex-convexity} and integration give the contradiction
\[
\begin{aligned}
 0&=(p-p_0)\cdot[\nabla E_\alpha(p)-\nabla E_\alpha(p_0)]\\
 &=\int_0^1(p-p_0)^{\mathsf T}
 D^2E_\alpha(p_0+t(p-p_0))(p-p_0)\,dt>0.
\end{aligned}
\]
If $p$ lies on the shape boundary, convexity keeps
$p-s(p-p_0)$ feasible for $0\le s\le1$. Minimality gives
\[
\begin{aligned}
 0&\le\left.\frac d{ds}E_\alpha(p-s(p-p_0))\right|_{s=0^+}\\
 &=-\int_0^1(p-p_0)^{\mathsf T}
 D^2E_\alpha(p_0+t(p-p_0))(p-p_0)\,dt<0,
\end{aligned}
\]
again a contradiction. Every minimum is therefore the triangular
deep-hole configuration, up to inversion and integer translations.
The two deep holes have the same energy by inversion, proving the
comparison and all equality cases.
\end{proof}

\section{Comparison with $E_\alpha(iy;1/2,1/2)$}\label{sec:rectangles}

In this section, we reduce the minimization problem to centered
rectangles for $8/15\leq\alpha\leq1$. This range overlaps that of
Section~\ref{sec:centered}. We then characterize the first transition
and complete the proof of Theorem~\ref{thm:classification}.

\begin{theorem}\label{thm:large-alpha-unrestricted}
Let $8/15\leq\alpha\leq1$. After a change of lattice basis and reflection,
every global minimizer of $E_\alpha(z;a,b)$ has
\[
 z=iy,\quad y\geq1,\qquad (a,b)\equiv(1/2,1/2)\pmod{\mathbb Z^2}.
\]
The minimizing lattice is square for
$8/15\leq\alpha\leq\alpha_c$ and centered rectangular for
$\alpha_c<\alpha\leq1$, with the aspect ratio $y_\alpha$ defined in Section~\ref{sec:introduction}. At
$\alpha=1$ the aspect ratio is $\sqrt3$.
\end{theorem}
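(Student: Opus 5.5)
We follow the strategy of Theorem~\ref{thm:centered}: first force the centered displacement, then reduce to the classification of \cite[Theorem~2.1]{LW} through the centered identity \eqref{eq:centered-identity}. By Lemma~\ref{lem:cusp} and Proposition~\ref{Prop21}, it suffices to treat reduced shapes $0\le x\le\frac12$, $x^2+y^2\ge1$, $\frac{\sqrt3}{2}\le y<2$, with $(a,b)\in[0,1]^2$.

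\textbf{Step 1: exclude the non-strip region.} For the whole range $8/15\le\alpha\le1$, I would exclude $x\ge 3/10$ (or $x\ge 39/100$) by comparison with a fixed centered competitor.
\begin{itemize}
\item For $\alpha\le\alpha_c$ the competitor is the centered square $(i;1/2,1/2)$.
\item For $\alpha$ near $1$ it is a centered rectangle $(iy_*;1/2,1/2)$ with some fixed rational $y_*\in(1,\sqrt3]$.
\end{itemize}
The tool is the displacement-free lower bound of Lemma~\ref{lem:four}. On a finite cover of the excluded shape region by boxes, I would evaluate that lower bound and bound its variation with the Lipschitz/derivative enclosures stated in the lemma. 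Since the right-hand side of \eqref{eq:lower} and the competitor energy are both affine in $\alpha$, it suffices to check each comparison at the two endpoints of finitely many $\alpha$-subintervals covering $[8/15,1]$. I expect these to be certificates of the type in Lemma~\ref{lem:cert-global}.

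\textbf{Step 2: centered displacement on the strip.} On the remaining strip $y\ge\sqrt{1-(3/10)^2}>23/25$ and $y<2$, Lemma~\ref{lem:centerstrip} gives $J(z;a,b)>J(z;1/2,1/2)$ unless $(a,b)\equiv(1/2,1/2)$. Because $\alpha>0$, every minimizer is therefore centered.

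\textbf{Step 3: reduction to \cite[Theorem~2.1]{LW}.} By \eqref{eq:centered-identity}, the centered energy equals $(1-\alpha)\theta(1;z)+2\alpha\theta\bigl(2;\frac{z+1}{2}\bigr)$. This is exactly the sum of theta functions classified in \cite[Theorem~2.1]{LW}, which gives the following.
\begin{itemize}
\item For $\alpha\in[\alpha_b,\alpha_c]$, which contains $[8/15,\alpha_c]$, the minimizer is the square $z=i$.
\item For $\alpha\in(\alpha_c,1]$, it is the rectangle $iy_\alpha$, with $y_\alpha$ the unique root of \eqref{eq:rectangular-branch}.
\end{itemize}
At $\alpha=1$ the energy reduces to $2\theta(2;\frac{z+1}{2})$. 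By Montgomery's Theorem~A this is minimized when $\frac{z+1}{2}$ is hexagonal, which gives $z=i\sqrt3$ modulo the group, so $y_1=\sqrt3$. The equivalences "after a change of basis and reflection" come from \eqref{Eq21}: the transformations $z\mapsto z+1$ and $z\mapsto-\bar z$ act on the displacement $(1/2,1/2)$ and preserve the centered class modulo $\mathbb Z^2$ and inversion.

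\textbf{Main obstacle.} The hardest step is Step 1 near $\alpha=1$. There the competitor $iy_*$ is far from the square, and the lower bound of Lemma~\ref{lem:four} weakens: the factor $1-\alpha$ multiplies the retained remainder and vanishes at $\alpha=1$. Near $x=1/2$ and $y\approx\sqrt3/2$, the hexagonal deep-hole configuration is also energetically close. I would first try a single competitor $y_*=\sqrt3$ on $[7/8,1]$ and the square on $[8/15,7/8]$. If the margin fails near $x\approx1/2$, I would refine the box cover there and enlarge the retained Fourier set beyond four pairs.
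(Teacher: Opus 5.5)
\textbf{There is a genuine gap in your Step~1.} On $[8/15,23/25]$ your argument works: it is exactly Theorem~\ref{thm:centered}, namely the square comparison of Lemma~\ref{lem:cert-global} followed by Lemma~\ref{lem:centerstrip}. But for $\alpha$ near $1$ the bound \eqref{eq:lower} cannot beat any centered competitor. The failure is pointwise, so neither refining the box cover nor changing the rectangular competitor $y_*$ can fix it.

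At $\alpha=1$ the remainder in \eqref{eq:lower} has coefficient $1-\alpha=0$. Choosing $u=v=\pi$ in the displacement minimum shows that the right-hand side of \eqref{eq:lower} is at most $2+4(C+D)$. This is about $2.0079$ at $(x,y)=(3/10,\sqrt3)$, and about $2.0045$ at $x=39/100$ with $y$ close to $2$. On the other hand, \eqref{eq:centered-identity} and Theorem~A give, for every centered competitor,
\[
 E_1(z;1/2,1/2)=2\theta\bigl(2;\tfrac{z+1}{2}\bigr)\ge 2\theta(2;e^{i\pi/3})\approx 2.0085 .
\]
By continuity the same obstruction persists for $\alpha$ in a neighborhood of $1$. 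The lost energy is essentially the omitted pair $(0,\pm2)$. At the centered displacement it is worth $4\alpha e^{-4\pi/y}\approx 0.0075$ at $y=2$, while the true margin there is only a few thousandths.

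You also locate the difficulty in the wrong place. At $\alpha=1$ the triangular deep hole has energy about $2.080$, so the corner $x\approx 1/2$, $y\approx\sqrt3/2$ is harmless; the trouble is at large $y$.

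\textbf{What a repair would need, and what the paper does instead.} To save your route you would need a new displacement-elimination lemma that keeps at least the $(0,\pm2)$ pair with its coefficient $1+\alpha\cos(4\pi b)$. This stays one-dimensional, since $\cos 4\pi b=2t^2-1$ with $t=\cos v$ as in Lemma~\ref{lem:four}. You would then need fresh certificates on $(23/25,1]$ with several rectangular competitors. None of this is in your proposal.

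The paper avoids competitors altogether. Lemma~\ref{lem:same-height} compares each configuration with the centered rectangle at the same height: $E_\alpha(x+iy;a,b)\ge E_\alpha(iy;1/2,1/2)$ for $|x|\le 1/2$, $y\ge\sqrt3/2$, $\alpha\ge 8/15$. Because the height is fixed, the $x$-independent rows, including $(0,\pm2)$, are kept exactly through $\sum_n e^{-\pi y(n+b)^2}$. The argument then balances two estimates:
\begin{enumerate}
\item Rowwise Poisson summation bounds the shear loss of $\theta$ by $8\sqrt y\,e^{-2\pi y}(1+5e^{-3\pi y})^2\sin^2(\pi x)$.
\item The quantified one-dimensional Jacobi bound \eqref{eq:quantified-one-d} bounds the displacement penalty of $J$ from below. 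The bound is a multiple of $e^{-5\pi y/4}\sin^2(\pi x)$ or of $e^{-13\pi y/9}\sin^2(\pi x)$, or $\frac9{100}\sqrt y\,(1-2e^{-\pi y})e^{-\pi y/4}$ when $b\notin[1/3,2/3]$.
\end{enumerate}
The ratio of the loss to the penalty stays below $8/15$. This forces $x=0$ and $(a,b)\equiv(1/2,1/2)$ uniformly up to $\alpha=1$, with no numerics. Then $|z|\ge 1$ gives $y\ge 1$, and \cite[Theorem~2.1]{LW} finishes exactly as in your Step~3.
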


The main estimate compares a configuration with the centered
rectangle at the same height. One-dimensional theta estimates show
that the translated interaction controls the loss caused by lattice
shear, uniformly over all displacements.

\begin{lemma}\label{lem:same-height}
Let $|x|\leq1/2$, $y\geq\frac{\sqrt3}{2}$, and $\alpha\geq8/15$. Then
\[
 E_\alpha(x+iy;a,b)\geq E_\alpha(iy;1/2,1/2).
\]
Equality holds exactly when $x=0$ and
$(a,b)\equiv(1/2,1/2)\pmod{\mathbb Z^2}$.
\end{lemma}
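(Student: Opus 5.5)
The plan is to work row by row in the one-dimensional Poisson representation already used in the proof of Lemma~\ref{lem:cusp}:
\[
 \theta(1;x+iy)=\sqrt y\sum_{m\in\mathbb Z}e^{-\pi ym^2}\vartheta(y;mx),\qquad
 J(x+iy;a,b)=\sqrt y\sum_{n\in\mathbb Z}e^{-\pi y(n+b)^2}\vartheta\bigl(y;a+x(n+b)\bigr).
\]
The target is the value at the centered rectangle,
\[
 E_\alpha(iy;1/2,1/2)=\sqrt y\,\bigl[\vartheta_3(y)^2+\alpha\vartheta_4(y)\vartheta_2(y)\bigr].
\]
The crude bounds $\vartheta(y;t)\ge\vartheta_4(y)$ and $\sum_n e^{-\pi y(n+b)^2}\ge\vartheta_2(y)$ from Lemma~\ref{lem:cusp} control $J$ correctly. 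For $\theta$, however, they lose $\sqrt y(\vartheta_3-1)(\vartheta_3-\vartheta_4)$. So the shear loss in $\theta$ must be paid for by a quantitative gain in $J$. The mechanism is that the two dominant rows of $J$ are evaluated at phases $a+x\beta_1$ and $a+x\beta_2$ with $\beta_1-\beta_2=1$. Since these differ by $x$, they cannot both sit at the minimizing phase $1/2$ unless $x=0$.

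\textbf{Step 1: a quantitative Jacobi product bound.} From the Jacobi product I will derive
\[
 \vartheta(y;t)-\vartheta_4(y)\ge c(y)\cos^2(\pi t),
 \qquad
 \vartheta_3(y)-\vartheta(y;t)\le C(y)\sin^2(\pi t).
\]
Here $c(y)\approx4e^{-\pi y}\prod_k(1-e^{-2\pi ky})(1-e^{-\pi(2k-1)y})^2$ is read off from the $k=1$ factor. The constant $C(y)$ comes from $\vartheta_3-\vartheta(y;t)=\sum_n e^{-\pi yn^2}2\sin^2(\pi nt)$ together with $\sin^2(\pi nt)\le n^2\sin^2(\pi t)$.

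\textbf{Step 2: bound the shear loss in $\theta$.} Applying Step 1 with $t=mx$ and using $\sin^2(\pi mx)\le m^2\sin^2(\pi x)$ gives
\[
 \theta(1;iy)-\theta(1;x+iy)\le\sqrt y\,K_1(y)\sin^2(\pi x),
\]
where $K_1(y)$ is an explicit theta series of size about $4e^{-2\pi y}$.

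\textbf{Step 3: bound the gain in $J$.} Keep the two rows $\beta_1=\{b\}$ and $\beta_2=\{b\}-1$. Their weights are $w_1=e^{-\pi y\beta_1^2}$ and $w_2=e^{-\pi y\beta_2^2}$, and the smaller of the two is at least $e^{-\pi y}$. The elementary bound
\[
 \min_{u-w=\pi x}\bigl(\cos^2u+\cos^2w\bigr)=1-|\cos\pi x|\ge\tfrac12\sin^2(\pi x)
\]
then gives
\[
 J(x+iy;a,b)\ge\sqrt y\,\vartheta_4(y)\sum_n e^{-\pi y(n+b)^2}+\sqrt y\,\tfrac12 c(y)e^{-\pi y}\sin^2(\pi x).
\]
The remaining row sum is at least $\vartheta_2(y)$, with equality only at $b\equiv1/2$.

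\textbf{Step 4: compare constants.} Combining Steps 2 and 3, it suffices to show
\[
 \alpha\,\tfrac12 c(y)e^{-\pi y}\ge K_1(y)\quad\text{for } y\ge\tfrac{\sqrt3}{2},\ \alpha\ge\tfrac8{15}.
\]
Both sides are of order $e^{-2\pi y}$, so this reduces to a ratio of convergent products that is monotone in $y$. The check is then made at $y=\sqrt3/2$ together with a large-$y$ limit.

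\textbf{Equality cases.} Equality forces $\sin(\pi x)=0$, hence $x=0$. With $x=0$, equality in $\vartheta(y;a)\ge\vartheta_4(y)$ forces $\cos(\pi a)=0$, so $a\equiv1/2$, and equality in the row sum forces $b\equiv1/2$.

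\textbf{Main obstacle.} I expect Step 4 to be the hardest part. The constant $8/15$ suggests the margin is thin, so the coarse bounds $\sin^2(\pi mx)\le m^2\sin^2\pi x$ and $\min(w_1,w_2)\ge e^{-\pi y}$ may be too lossy near $y=\sqrt3/2$. If so, my first fix is to keep the exact two-row expression in $b$, weighting the phase gain by $w_1w_2/(w_1+w_2)$ rather than $\min(w_1,w_2)$, to retain all rows $|m|\le2$ in $\theta$ exactly, and to bound the remaining tails crudely. If that is still insufficient, I will verify the resulting two-variable inequality in $(x,y)$ on a compact $y$-range using certified finite-sum estimates, as in Appendix~\ref{app:certificates}, and use the asymptotic $e^{-2\pi y}$ comparison for large $y$.
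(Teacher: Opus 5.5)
Step 4 fails outright, not by a thin margin. For $\alpha$ near $8/15$ the inequality it asks you to verify is false at every height, because Step 3 discards the term that actually pays for the shear.

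First, the loss is twice your estimate. We have $\theta(1;iy)-\theta(1;x+iy)=8\sqrt y\sum_{m,k\ge1}e^{-\pi y(m^2+k^2)}\sin^2(\pi mkx)$, and the $m=k=1$ term alone is $8\sqrt y\,e^{-2\pi y}\sin^2(\pi x)$. So $K_1(y)\approx8e^{-2\pi y}$: your $C(y)\approx4e^{-\pi y}$ times $\sum_m m^2e^{-\pi ym^2}\approx2e^{-\pi y}$. Your gain is $\tfrac12\alpha c(y)e^{-\pi y}\approx2\alpha e^{-2\pi y}$, so Step 4 would need roughly $\alpha\ge4$.

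The harmonic-mean fix does not help where it matters. For $b\equiv0$ the two weights are $1$ and $e^{-\pi y}$, so $w_1w_2/(w_1+w_2)\approx e^{-\pi y}$ and the gain is still only about $4\alpha e^{-2\pi y}$. Even keeping every row, at $b\equiv0$, $a=\tfrac12$ the excess of $J$ over $\sqrt y\,\vartheta_4(y)\sum_ne^{-\pi y(n+b)^2}$ is only about $8\sqrt y\,e^{-2\pi y}\sin^2(\pi x)$. That is the same leading term as the loss. So no sharpening of the phase gain alone can work for $\alpha$ near $8/15$, and a certified check of the resulting $b$-free inequality in $(x,y)$ would simply fail.

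The missing ingredient is the row-sum excess $\sqrt y\,\vartheta_4(y)\bigl(\sum_ne^{-\pi y(n+b)^2}-\vartheta_2(y)\bigr)$, which you use only for the equality case. The two-row phase gain has the right size, about $2\sqrt y\,e^{-5\pi y/4}\sin^2(\pi x)$, only for $b$ near $\tfrac12$. There the loss-to-gain ratio is about $4e^{-3\pi y/4}$, just below $8/15$ at $y=\sqrt3/2$; that is where the constant comes from. For $b$ away from $\tfrac12$ the row-sum excess is much larger than $e^{-2\pi y}$ and is not multiplied by $\sin^2(\pi x)\le1$, so it dominates the loss.

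The paper combines these two $b$-dependent gains, starting from your harmonic-mean penalty $\frac{4e^{-\pi y}(1-5e^{-3\pi y})}{e^{\pi yb^2}+e^{\pi y(1-b)^2}}\sin^2(\pi x)$.
\begin{itemize}
\item For $\sqrt3/2\le y\le1$, it bounds the weight below by $\tfrac12e^{-\pi y/4}\cos^2(\pi(b-\tfrac12))$. It bounds the row-sum excess below by a larger multiple of $\sin^2(\pi(b-\tfrac12))$, using Poisson summation at $1/y$ and the quantified one-dimensional bound. Together these give the $b$-uniform penalty $2\sqrt y\,e^{-5\pi y/4}(1-5e^{-3\pi y})\sin^2(\pi x)$.
\item For $y\ge1$ and $b\in[\tfrac13,\tfrac23]$, the weight is at least $e^{-4\pi y/9}/(1+e^{-\pi y/3})$.
\item For $y\ge1$ and $b\notin[\tfrac13,\tfrac23]$, the excess exceeds $\tfrac9{100}\sqrt y\,(1-2e^{-\pi y})e^{-\pi y/4}$.
\end{itemize}
Your Steps 1--2 and the two-row mechanism are sound. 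Without this $b$-dependent trade-off, however, the comparison cannot close.
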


\begin{proof}
Since $y\geq\sqrt3/2$, we have $e^{-\pi y}<1/15$. The
alternating series and the estimates
$n^2\geq4+5(n-2)$, $n^2\leq4\,3^{n-2}$ for $n\geq2$ give
\begin{equation}\label{eq:large-alpha-tails}
 \vartheta_4(y)\geq1-2e^{-\pi y},\qquad
 \sum_{n\geq1}n^2e^{-\pi yn^2}
 \leq e^{-\pi y}(1+5e^{-3\pi y}).
\end{equation}
Indeed,
\[
 \sum_{n\geq2}n^2e^{-\pi yn^2}
 \leq\frac{4e^{-4\pi y}}{1-3e^{-5\pi y}}<5e^{-4\pi y}.
\]
The Jacobi product implies that $\vartheta(y;\cdot)$ has its
unique minimum $\vartheta_4(y)$ at $1/2$ modulo $\mathbb Z$.
More precisely,
\begin{equation}\label{eq:quantified-one-d}
 \vartheta(y;t)\geq\vartheta_4(y)
 +4e^{-\pi y}(1-5e^{-3\pi y})\cos^2(\pi t).
\end{equation}
The $k=1$ pair contributes $4e^{-\pi y}\cos^2(\pi t)$ to the
difference. Every odd $k$ contributes a nonnegative quantity,
whereas for even $k$, $|\sin(ku)|\leq k|\sin u|$ bounds its
contribution below by
$-4k^2e^{-\pi yk^2}\cos^2(\pi t)$.
Thus \eqref{eq:large-alpha-tails} proves
\eqref{eq:quantified-one-d}.

Rowwise Poisson summation gives
\[
 \theta(1;x+iy)=\sqrt y\sum_{m,k\in\mathbb Z}
 e^{-\pi y(m^2+k^2)}\cos(2\pi mkx).
\]
Hence
\begin{equation}\label{eq:theta-shear-loss}
\begin{aligned}
 0\leq\theta(1;iy)-\theta(1;x+iy)
 &=8\sqrt y\sum_{m,k\geq1}
 e^{-\pi y(m^2+k^2)}\sin^2(\pi mkx)\\
 &\leq8\sqrt y\,e^{-2\pi y}(1+5e^{-3\pi y})^2
 \sin^2(\pi x).
\end{aligned}
\end{equation}
The translated representation of $J$, followed by horizontal
Poisson summation, yields
\[
 J(x+iy;a,b)=\sqrt y\sum_{n\in\mathbb Z}
 e^{-\pi y(n+b)^2}\vartheta\bigl(y;a+x(n+b)\bigr),
 \quad
 J(iy;1/2,1/2)=\sqrt y\,\vartheta_4(y)\vartheta_2(y).
\]
By periodicity, take $0\leq b\leq1$. The Jacobi product after
one-dimensional Poisson summation shows that
$\sum_ne^{-\pi y(n+b)^2}$ decreases strictly on $[0,1/2]$,
is symmetric about $1/2$, and has minimum $\vartheta_2(y)$.
Retaining the $n=0,-1$ corrections in
\eqref{eq:quantified-one-d} gives
\begin{equation}\label{eq:two-row-penalty}
\begin{aligned}
 &J(x+iy;a,b)-\sqrt y\,\vartheta_4(y)\vartheta_2(y)\\
 &\quad\geq\sqrt y\left[
 \vartheta_4(y)\left(\sum_{n\in\mathbb Z}e^{-\pi y(n+b)^2}
 -\vartheta_2(y)\right)
 +\frac{4e^{-\pi y}(1-5e^{-3\pi y})}
 {e^{\pi yb^2}+e^{\pi y(1-b)^2}}\sin^2(\pi x)\right].
\end{aligned}
\end{equation}
Here, for $\lambda,\mu>0$,
\[
 \min_u\{\lambda\cos^2u+\mu\cos^2(u-\pi x)\}
 =\frac{\lambda+\mu-
 \sqrt{(\lambda+\mu)^2-4\lambda\mu\sin^2(\pi x)}}2
 \geq\frac{\lambda\mu}{\lambda+\mu}\sin^2(\pi x).
\]

Suppose first that $\sqrt3/2\leq y\leq1$. Since
$e^{-\pi/y}<1/23$, Poisson summation and
\eqref{eq:quantified-one-d}, applied at $1/y$, imply
\[
 \sum_ne^{-\pi y(n+b)^2}-\vartheta_2(y)
 \geq\frac{4e^{-\pi/y}(1-5e^{-3\pi/y})}{\sqrt y}
 \sin^2\bigl(\pi(b-1/2)\bigr).
\]
Moreover,
\[
 \frac1{e^{\pi yb^2}+e^{\pi y(1-b)^2}}
 =\frac{e^{-\pi y/4}}2
 \frac{e^{-\pi y(b-1/2)^2}}{\cosh\bigl(\pi y(b-1/2)\bigr)}
 \geq\frac{e^{-\pi y/4}}2
 \cos^2\bigl(\pi(b-1/2)\bigr).
\]
Indeed, $\log\cosh u\leq u^2/2$, $\log\cos u\leq-u^2/2$
for $|u|<\pi/2$, and
$\pi y+\pi^2y^2/2\leq\pi+\pi^2/2<\pi^2$; the endpoints
follow by continuity. Also,
\[
 \frac{4\vartheta_4(y)e^{-\pi/y}(1-5e^{-3\pi/y})}{\sqrt y}
 >2e^{-5\pi y/4}(1-5e^{-3\pi y}),
\]
because
\[
 e^{-\pi(5y/4-1/y)}
 \leq e^{\pi/(8\sqrt3)}<\frac43
 <\frac53<2\frac{13}{15}\left(1-\frac5{23^3}\right).
\]
Inserting these inequalities into \eqref{eq:two-row-penalty} gives
\begin{equation}\label{eq:low-height-penalty}
 J(x+iy;a,b)-\sqrt y\,\vartheta_4(y)\vartheta_2(y)
 \geq2\sqrt y\,e^{-5\pi y/4}(1-5e^{-3\pi y})\sin^2(\pi x).
\end{equation}
The ratio of the bound in \eqref{eq:theta-shear-loss} to that in
\eqref{eq:low-height-penalty} is at most
\[
 4e^{-3\pi y/4}\frac{(1+5e^{-3\pi y})^2}{1-5e^{-3\pi y}}
 <4\,15^{-3/4}\frac{(1+5/15^3)^2}{1-5/15^3}<\frac8{15}.
\]
This proves the comparison in this height range, strictly if $x\ne0$.

Now let $y\geq1$, so $e^{-\pi y}<1/23$. For
$1/3\leq b\leq2/3$, convexity and symmetry about $1/2$ yield
\[
 \frac1{e^{\pi yb^2}+e^{\pi y(1-b)^2}}
 \geq\frac{e^{-4\pi y/9}}{1+e^{-\pi y/3}}.
\]
Thus \eqref{eq:two-row-penalty} implies
\[
 J(x+iy;a,b)-\sqrt y\,\vartheta_4(y)\vartheta_2(y)
 \geq\frac{4\sqrt y\,e^{-13\pi y/9}(1-5e^{-3\pi y})}
 {1+e^{-\pi y/3}}\sin^2(\pi x).
\]
The ratio of the theta loss to this bound satisfies
\begin{equation}\label{eq:large-alpha-interior-bound}
\begin{aligned}
 2e^{-5\pi y/9}(1+e^{-\pi y/3})
 \frac{(1+5e^{-3\pi y})^2}{1-5e^{-3\pi y}}
 &<2\,23^{-5/9}(1+23^{-1/3})
 \frac{(1+5/23^3)^2}{1-5/23^3}
 \\
 &<\frac12<\frac8{15}.
\end{aligned}
\end{equation}
The desired comparison is therefore strict if $x\ne0$.

If $b\notin[1/3,2/3]$, monotonicity gives
\[
 \sum_ne^{-\pi y(n+b)^2}-\vartheta_2(y)
 \geq e^{-\pi y/9}+e^{-4\pi y/9}+e^{-16\pi y/9}
 -\frac{2e^{-\pi y/4}}{1-e^{-2\pi y}}.
\]
Here we retained three positive terms in the first series and used
$(n+1/2)^2\geq1/4+2n$ in the second. For
$y\geq(\log23)/\pi$,
\[
\begin{aligned}
 &\frac{36e^{-5\pi y/36}}{\pi}\frac d{dy}
 \left(e^{5\pi y/36}+e^{-7\pi y/36}+e^{-55\pi y/36}
 -\frac2{1-e^{-2\pi y}}\right)\\
 &\qquad\geq5-7e^{-\pi y/3}-55e^{-5\pi y/3}>0.
\end{aligned}
\]
Consequently,
\[
\begin{aligned}
 e^{5\pi y/36}+e^{-7\pi y/36}+e^{-55\pi y/36}
 -\frac2{1-e^{-2\pi y}}
 &\geq23^{5/36}+23^{-7/36}+23^{-55/36}
 -\frac2{1-23^{-2}}\\
 &>\frac9{100}.
\end{aligned}
\]
It follows that
\[
 J(x+iy;a,b)-\sqrt y\,\vartheta_4(y)\vartheta_2(y)
 >\frac9{100}\sqrt y\,(1-2e^{-\pi y})e^{-\pi y/4}.
\]
This penalty dominates the theta loss divided by $\alpha$, since
\[
 \frac{8e^{-7\pi y/4}(1+5e^{-3\pi y})^2}
 {(9/100)(1-2e^{-\pi y})}
 <\frac{8\,23^{-7/4}(1+5/23^3)^2}{(9/100)(1-2/23)}
 <\frac12<\frac8{15}.
\]
The comparison is strict throughout this range of $b$.

Finally, at $x=0$ the theta functions agree, while
\[
 J(iy;a,b)=\sqrt y\,\vartheta(y;a)
 \sum_ne^{-\pi y(n+b)^2}.
\]
The unique minima of its two positive factors require
$a\equiv b\equiv1/2\pmod{\mathbb Z}$.
\end{proof}

\begin{proof}[Proof of Theorem~\ref{thm:large-alpha-unrestricted}]
Modular reduction places every candidate in
$|x|\leq1/2$, $|z|\geq1$, hence $y\geq\frac{\sqrt3}{2}$.
Lemma~\ref{lem:same-height} forces every minimizer to be centered
rectangular; the reduced-domain condition then gives $y\geq1$.
The centered classification of \cite[Theorem~2.1]{LW}
now gives the asserted square and rectangular phases, since
$\alpha_b<8/15$.
\end{proof}

We now locate the first transition and combine the preceding
comparisons to prove the full classification. Concavity of the centered
minimum, together with certified endpoint signs, gives uniqueness of
its crossing with the triangular energy.\par\nopagebreak[4]

\begin{proof}[Proof of the threshold characterization]
At $\alpha=0$, the centered classification gives
\[
 \min_{\tau\in\mathbb H}E_0(\tau;1/2,1/2)
 =E_0(e^{i\pi/3};1/3,1/3)=\theta(1;e^{i\pi/3}).
\]
The centered minimum is continuous on $[0,1]$. The difference
\[
 \min_{\tau\in\mathbb H}E_\alpha(\tau;1/2,1/2)
       -E_\alpha(e^{i\pi/3};1/3,1/3)
\]
is concave: its first term is the infimum of affine functions of
$\alpha$, and its second term is affine.
Lemma~\ref{lem:cert-thresholds} gives
\begin{equation}\label{eq:crossing-endpoint-signs}
\begin{aligned}
 \min_{\tau\in\mathbb H}E_{1723/10000}(\tau;1/2,1/2)
 -E_{1723/10000}(e^{i\pi/3};1/3,1/3)
     &>0,\\
 \min_{\tau\in\mathbb H}E_{7/40}(\tau;1/2,1/2)
 -E_{7/40}(e^{i\pi/3};1/3,1/3)
     &<0.
\end{aligned}
\end{equation}
Concavity and the first inequality in
\eqref{eq:crossing-endpoint-signs} imply
\[
 \min_{\tau\in\mathbb H}E_\alpha(\tau;1/2,1/2)
 >E_\alpha(e^{i\pi/3};1/3,1/3)
 \qquad(0<\alpha\leq1723/10000).
\]
Continuity and the second inequality give a zero of the energy
difference between $1723/10000$ and $7/40$. The secant slope from
$1723/10000$ to any subsequent zero is strictly negative. Concavity
makes the difference strictly negative beyond that zero, proving
uniqueness of $\alpha_a$ and
\begin{equation}\label{eq:crossing-signs}
\begin{aligned}
 \min_{\tau\in\mathbb H}E_\alpha(\tau;1/2,1/2)
     &>E_\alpha(e^{i\pi/3};1/3,1/3)
                        &&(0<\alpha<\alpha_a),\\
 \min_{\tau\in\mathbb H}E_\alpha(\tau;1/2,1/2)
     &<E_\alpha(e^{i\pi/3};1/3,1/3)
                        &&(\alpha_a<\alpha\leq1).
\end{aligned}
\end{equation}
Throughout the initial bracket the centered minimizer is rhombic.
At the crossing, put $x_*:=\cos\varphi_{\alpha_a}$.
Its stationarity and energy equality give, with $z=x+i\sqrt{1-x^2}$,
\begin{equation}\label{eq:crossing-system}
\begin{aligned}
 E_\alpha(z;1/2,1/2)&=E_\alpha(e^{i\pi/3};1/3,1/3),\\
 \frac{d}{dx}E_\alpha(z;1/2,1/2)&=0.
\end{aligned}
\end{equation}
Conversely, a
solution of that system on the centered minimizing branch gives a
zero of the energy difference, and hence must have coupling $\alpha_a$.
Uniqueness of the centered minimizer then also determines $x_*$.

Eliminating $\alpha$ from \eqref{eq:crossing-system} gives
\begin{equation}\label{eq:crossing-residual}
 \bigl(\theta(1;z)-\theta(1;e^{i\pi/3})\bigr)
       \frac{d}{dx}J(z;1/2,1/2)
 -\frac{d}{dx}\theta(1;z)
       \bigl(J(z;1/2,1/2)-J(e^{i\pi/3};1/3,1/3)\bigr)=0.
\end{equation}
Lemma~\ref{lem:cert-thresholds} supplies a sign-changing interval
for \eqref{eq:crossing-residual}, on which the derivative quotient satisfies
\[
 -\frac{\frac{d}{dx}\theta(1;x+i\sqrt{1-x^2})}
        {\frac{d}{dx}J(x+i\sqrt{1-x^2};1/2,1/2)}
 \in\left(\frac{1723}{10000},\frac7{40}\right).
\]
The intermediate value theorem gives a zero. The centered
classification identifies its quotient with a minimizing coupling;
the single-crossing argument then identifies the pair with
$(x_*,\alpha_a)$.
The energy equality yields
\begin{equation}\label{eq:crossing-quotient}
 \alpha_a=
 \frac{\theta(1;x_*+i\sqrt{1-x_*^2})-\theta(1;e^{i\pi/3})}
 {J(e^{i\pi/3};1/3,1/3)-J(x_*+i\sqrt{1-x_*^2};1/2,1/2)}.
\end{equation}
The denominator is strictly positive, since $\alpha_a>0$ and the
unshifted theta difference is strictly positive at the nontriangular
shape $x_*+i\sqrt{1-x_*^2}$.
\end{proof}

The following lemma combines the strip, bridge, and triangular
comparisons on the rational interval containing the first transition.

\begin{lemma}[Comparison across the first transition]\label{lem:first-transition}
For $1723/10000\leq\alpha\leq7/40$, every shape and displacement satisfy
\[
 E_\alpha(z;a,b)\geq
 \min\left\{E_\alpha(e^{i\pi/3};1/3,1/3),\,
       \min_{\tau\in\mathbb H}E_\alpha(\tau;1/2,1/2)\right\}.
\]
Equality holds precisely at the triangular deep holes for
$\alpha<\alpha_a$, at the centered rhombic minimizer for
$\alpha>\alpha_a$, and at both families for $\alpha=\alpha_a$.
In the last case,
\begin{equation}\label{eq:supporting-inequality}
 E_{\alpha_a}(z;a,b)\geq E_{\alpha_a}(e^{i\pi/3};1/3,1/3)=\min_{\tau\in\mathbb H}E_{\alpha_a}(\tau;1/2,1/2),
\end{equation}
All equality statements are understood modulo the stated symmetries.
\end{lemma}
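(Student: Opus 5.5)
We prove Lemma~\ref{lem:first-transition} by splitting the reduced shape domain into three pieces and applying on each the comparison already established there. Fix $\alpha\in[1723/10000,7/40]$ and write
\[
 m_\alpha:=\min\left\{E_\alpha(e^{i\pi/3};1/3,1/3),\,\min_{\tau\in\mathbb H}E_\alpha(\tau;1/2,1/2)\right\}.
\]
By Proposition~\ref{Prop21} and Lemma~\ref{lem:cusp}, the candidates for minimality reduce to $0\le x\le 1/2$, $x^2+y^2\ge1$, $\sqrt3/2\le y<2$, with $(a,b)$ taken modulo $\mathbb Z^2$. We split this set into three regions:
\begin{itemize}
\item[(i)] the strip $0\le x\le 39/100$;
\item[(ii)] the two bridge regions of Proposition~\ref{prop:bridge};
\item[(iii)] the triangular cap $19/40\le x\le1/2$, $\sqrt3/2\le y\le\sqrt3/2+1/40$.
\end{itemize}
These cover the reduced domain. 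If $19/40\le x\le 1/2$ and $y<891/1000$, then $y<\sqrt3/2+1/40$, because $\sqrt3/2+1/40>0.891$.

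On (i), $y\ge\sqrt{1-(39/100)^2}>23/25$. Lemma~\ref{lem:centerstrip} applies, and since $\alpha>0$ it gives
\[
 E_\alpha(z;a,b)\ge E_\alpha(z;1/2,1/2)\ge \min_\tau E_\alpha(\tau;1/2,1/2)\ge m_\alpha .
\]
Equality in the first step forces the centered displacement. On (ii), Proposition~\ref{prop:bridge} gives the strict bound $E_\alpha(z;a,b)>E_\alpha((3+i\sqrt{55})/8;1/2,1/2)\ge m_\alpha$, so no equality occurs there. On (iii), Theorem~\ref{cor:hex-cap-comparison} applies, since $[1723/10000,7/40]\subset[4/25,7/40]$. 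It gives $E_\alpha\ge E_\alpha(e^{i\pi/3};1/3,1/3)\ge m_\alpha$, with equality only at the deep holes $\pm(1/3,1/3)$ on the triangular shape. Combining the three regions proves the inequality.

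For the equality cases we use the sign information \eqref{eq:crossing-signs} from the threshold characterization. If $\alpha<\alpha_a$, then $m_\alpha$ equals the triangular energy and lies strictly below the centered minimum. Equality is then impossible in (i) and (ii), and in (iii) it holds only at the deep holes. If $\alpha>\alpha_a$, then $m_\alpha$ is the centered minimum, which is strictly below the triangular energy. Equality is then excluded in (iii) and (ii). In (i) it forces a centered displacement together with a centered minimizing shape, and by the centered classification \cite[Theorem~2.1]{LW} this shape is the rhombic one, since $\alpha<\alpha_b$. At $\alpha=\alpha_a$ both values coincide, so both families occur, and \eqref{eq:supporting-inequality} is the inequality itself.

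We also need the case where the rhombic centered minimizer lies in (iii) or (ii) rather than in (i). In (ii) the inequality is strict, so it cannot lie there. In (iii) the cap comparison excludes everything except the triangular shape. The rhombic angle $\varphi_\alpha>\varphi_{\alpha_a}\approx1.186$ gives $x=\cos\varphi_\alpha<0.38$, so the centered minimizer lies in (i) and is consistent with this.

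The main obstacle is bookkeeping rather than analysis. We must check that the three regions cover the reduced domain, and that the modular reductions used by each cited result agree up to the stated symmetries. The latter matters most in (iii), where the representative is chosen after simultaneous displacement inversion. We must also match the equality cases against the strict signs in \eqref{eq:crossing-signs}.
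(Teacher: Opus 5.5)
Your proposal is correct and follows the paper's proof essentially step by step. The paper likewise splits into the strip $x\le 39/100$ (Lemma~\ref{lem:centerstrip}), the two bridge regions (Proposition~\ref{prop:bridge}) and the remaining cap, which uses $891/1000<\sqrt3/2+1/40$ (Theorem~\ref{cor:hex-cap-comparison}), and then reads off the equality cases from \eqref{eq:crossing-signs}. Your final paragraph is superfluous: when $\alpha\ge\alpha_a$ the centered minimizer attains $m_\alpha$ by definition, and your first two sentences there already exclude it from (ii) and (iii), so you can drop the numerical appeal to $\varphi_{\alpha_a}\approx1.186$ and to the monotonicity of $\varphi_\alpha$.
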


\begin{proof}
By Lemma~\ref{lem:cusp}, an unrestricted minimizer has a reduced
representative with $y<2$. We compare its energy with the smaller
of the two values in the statement.

If $x\leq39/100$, then
$y\geq\sqrt{1-(39/100)^2}>23/25$, so
Lemma~\ref{lem:centerstrip} gives
\[
 E_\alpha(z;a,b)\geq E_\alpha(z;1/2,1/2)
 \geq\min_{\tau\in\mathbb H}E_\alpha(\tau;1/2,1/2).
\]
Equality in both inequalities requires the centered displacement and,
by the centered classification, the rhombic minimizing shape.
For $x\geq39/100$, Proposition~\ref{prop:bridge} strictly excludes
every shape outside
\[
 \frac{19}{40}\leq x\leq\frac12,\qquad
 \frac{\sqrt3}{2}\leq y\leq\frac{891}{1000},
\]
since its centered competitor has energy at least the centered minimum.
This remaining cap is contained in the region of
Theorem~\ref{cor:hex-cap-comparison}, because
$891/1000<\frac{\sqrt3}{2}+1/40$. Hence
\[
 E_\alpha(z;a,b)\geq E_\alpha(e^{i\pi/3};1/3,1/3),
\]
with equality precisely at the triangular deep-hole configurations.
The unrestricted minimum is therefore at least the smaller of the
two stated energies. Both energies are attained by admissible
configurations, so this lower bound is the unrestricted minimum.
The inequality consequently holds for every shape and displacement.
Finally, \eqref{eq:crossing-signs} and equality at $\alpha_a$ give
the asserted equality configurations on each side of the transition
and at the transition itself.
\end{proof}

\begin{proof}[Completion of the proof of Theorem~\ref{thm:classification}]
Lemma~\ref{lem:first-transition} and
$\frac{1723}{10000}<\alpha_a$ give
\[
 E_{1723/10000}(z;a,b)\geq E_{1723/10000}(e^{i\pi/3};1/3,1/3),
\]
with equality precisely at the triangular deep-hole configurations.
For $0<\alpha<\frac{1723}{10000}$, affine dependence gives
\begin{equation}\label{eq:coupling-interpolation}
\begin{aligned}
 &E_\alpha(z;a,b)-E_\alpha(e^{i\pi/3};1/3,1/3)\\
 &\quad=\left(1-\frac{10000\alpha}{1723}\right)
       \bigl[\theta(1;z)-\theta(1;e^{i\pi/3})\bigr]\\
 &\qquad+\frac{10000\alpha}{1723}
       \bigl[E_{1723/10000}(z;a,b)-E_{1723/10000}(e^{i\pi/3};1/3,1/3)\bigr]
       \geq0.
\end{aligned}
\end{equation}
Both coefficients are strictly positive, and both bracketed differences
are nonnegative by Montgomery's theorem and the comparison at $\frac{1723}{10000}$.
Equality forces the triangular deep-hole configurations. Thus the
entire interval $0<\alpha<\frac{1723}{10000}$ follows from a single comparison
at a rational coupling.

The remaining comparisons cover
\[
 [\frac{1723}{10000},7/40]\cup[7/40,8/15]\cup[8/15,1]=[\frac{1723}{10000},1].
\]
On $[\frac{1723}{10000},7/40]$, Lemma~\ref{lem:first-transition} and
\eqref{eq:crossing-signs} give the triangular phase up to $\alpha_a$,
both minimizing families at $\alpha_a$, and the centered rhombic
phase above it. We use Theorem~\ref{thm:centered} only on
$[7/40,8/15]$, which is contained in its stated range, and
Theorem~\ref{thm:large-alpha-unrestricted} on $[8/15,1]$.
Every minimizer for $\alpha>\alpha_a$ is therefore centered.
The classification of \cite[Theorem~2.1]{LW} gives the
rhombic, square, and rectangular phases, their transition endpoints,
and $y_1=\sqrt3$. This completes the positive-coupling classification.
At $\alpha=0$, Montgomery's theorem fixes the triangular shape,
while $E_0=\theta(1;z)$ is independent of displacement.

For completeness, Lemma~\ref{lem:analytic-properties} also gives
the nonpositive-coupling cases. For $-1<\alpha<0$,
\[
 E_\alpha(z;a,b)\geq(1+\alpha)\theta(1;z)
 \geq(1+\alpha)\theta(1;e^{i\pi/3}),
\]
with equality exactly for a triangular lattice and coincident
displacement. At $\alpha=-1$, the energy is $\theta(1;z)-J(z;a,b)$,
so every coincident configuration minimizes, and these are the only
equality configurations.
\end{proof}

\appendix
\section{Approximations and finite-sum estimates}\label{app:certificates}

With $Q_{m,n}$ as in \eqref{eq:lattice-quadratic-form}, put
\[
\begin{aligned}
 \theta_4(z)&=\sum_{|m|,|n|\le4}e^{-\pi Q_{m,n}},\\
 J_4(z;a,b)&=\sum_{|m|,|n|\le4}e^{-\pi Q_{m,n}}
                           \cos\bigl(2\pi(ma+nb)\bigr).
\end{aligned}
\]
A square ring of radius $r=\max(|m|,|n|)$ contains $8r$ points.
We repeatedly use, for $k\ge0$ and $(6/5)^ke^{-11\pi c}<1$,
\begin{equation}\label{eq:common-ring-tail}
 \sum_{r\ge5}r^ke^{-\pi cr^2}
 \le\frac{5^ke^{-25\pi c}}{1-(6/5)^ke^{-11\pi c}}.
\end{equation}
Indeed, $r=5+j$ gives $r^2\ge25+11j$ and
$r^k\le5^k(6/5)^{kj}$. In particular, if $Q_{m,n}\ge cr^2$,
\begin{equation}\label{eq:common-value-tail}
 |\theta(1;z)-\theta_4(z)|,\ |J-J_4|
 \le8\sum_{r\ge5}re^{-\pi cr^2}.
\end{equation}
 These
approximations, together with \eqref{eq:common-ring-tail}, reduce
all estimates below to finite inequalities.

\begin{lemma}[Centered-strip estimate]\label{lem:cert-strip}
With $A,B$ as in Lemma~\ref{lem:four} and $Q_{m,n}$ as in
\eqref{eq:lattice-quadratic-form}, define
\[
 H(x,y)=2\begin{pmatrix}A&0\\0&B\end{pmatrix}
 -\sum_{\substack{m,n\in\mathbb Z\\m+n\ \mathrm{even}}}
 e^{-\pi Q_{m,n}}\begin{pmatrix}m^2&|mn|\\|mn|&n^2\end{pmatrix}.
\]
If $0\leq x\leq39/100$ and $23/25\leq y\leq2$, then
\[
 H_{11}>\frac3{5000},\qquad H_{22}>\frac{17}{500},\qquad
 \det H>\frac1{10000},\qquad H\succ0.
\]
\end{lemma}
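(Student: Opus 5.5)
The plan is to treat this as a certified finite-sum estimate. Split $H$ into a finite part over $|m|,|n|\le4$ and a tail, bound each entry of the finite part on a grid of small boxes in $(x,y)$, and then deduce positive definiteness from the diagonal and determinant bounds.

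\textbf{Tail bound.} On $0\le x\le 39/100$, $23/25\le y\le2$ we have
\[
 Q_{m,n}=m^2y+(mx-n)^2/y\ge c\,r^2,\qquad r=\max(|m|,|n|).
\]
The value $c$ comes from the smallest eigenvalue of the positive definite form
\[
 (m,n)\mapsto m^2y+(mx-n)^2/y,
\]
which has determinant $1$ and trace $y+(1+x^2)/y$. Its smallest eigenvalue is bounded below uniformly on the region, for instance by $c=1/3$. Each entry of the even-parity sum is dominated by $r^2e^{-\pi Q_{m,n}}$, and a ring of radius $r$ contains $8r$ points. Applying \eqref{eq:common-ring-tail} with $k=3$ then bounds every entry of the tail ($r\ge5$) by
\[
 8\cdot5^3e^{-25\pi c}\big/\bigl(1-(6/5)^3e^{-11\pi c}\bigr),
\]
which is of order $10^{-8}$. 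This is negligible against the margins $3/5000$ and $1/10000$.

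\textbf{Finite part.} For the truncated matrix $H^{(4)}$, each entry is an explicit smooth function of $(x,y)$. Its partial derivatives are bounded by explicit sums of $|\partial Q_{m,n}|e^{-\pi Q_{m,n}}$ weights. I would cover the rectangle $[0,39/100]\times[23/25,2]$ by a grid of boxes, finer near $y=23/25$ and $x=39/100$. On each box I would evaluate the entries at the center and use a Lipschitz bound to get lower bounds for $H_{11}$ and $H_{22}$ and for $\det H=H_{11}H_{22}-H_{12}^2$. For the determinant, I would use an upper bound on $H_{12}\ge0$ together with lower bounds on the diagonal entries.

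The dominant contributions are easy to identify. Write $A=e^{-\pi(y+x^2/y)}$ and $B=e^{-\pi/y}$. Then $H_{11}\approx 2A-2e^{-\pi Q_{1,1}}-2e^{-\pi Q_{1,-1}}-\cdots$ and $H_{22}\approx 2B-2e^{-\pi Q_{1,1}}-2e^{-\pi Q_{1,-1}}-\cdots$, where the subtracted terms come from the even pairs $\pm(1,\pm1)$, $\pm(2,0)$, $\pm(0,2)$. The binding region is therefore where $A$ is smallest relative to $C=e^{-\pi Q_{1,1}}$, namely $x=39/100$, $y$ large. Since $Q_{1,1}-Q_{1,0}=(1-2x)/y$, we have $C/A=e^{-\pi(1-2x)/y}$. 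This ratio stays below $1$ but approaches it as $y$ grows; this is why $H_{11}$ has only the small margin $3/5000$. For large $y$ both $A$ and $C$ decay, so $H_{11}$ becomes small in absolute terms. I would also check the exact lower bound $H_{11}>3/5000$ separately near $y=2$, where the value itself is tiny.

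\textbf{Main obstacle.} The hardest step is this corner $x\approx39/100$, $y\approx2$. There $H_{11}$ is a small difference of comparable exponentials, so the Lipschitz constants must be handled carefully. I would first factor out $e^{-\pi y}$ from $H_{11}$. This reduces it to essentially
\[
 2e^{-\pi x^2/y}-2e^{-\pi(1-x)^2/y}-2e^{-\pi(1+x)^2/y}-\cdots,
\]
which is monotone in $x$ and well conditioned, so a coarse grid suffices. Finally, $\det H>0$ together with $H_{11}>0$ gives $H\succ0$ by Sylvester's criterion.
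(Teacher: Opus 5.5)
Your plan is essentially the paper's proof. It uses the same determinant-one/trace bound on $Q_{m,n}$ for the ring tail, truncation at $|m|,|n|\le4$, a finite box cover of $[0,39/100]\times[23/25,2]$, lower bounds on $H_{11}$, $H_{22}$ and $\det H$, and Sylvester's criterion. The one real difference is in how each box is bounded: the paper uses no derivative bounds. It factors out $A$, $B$ and $\sqrt{AB}$ (the analogue of your factoring out $e^{-\pi y}$) and bounds the normalized ratios through exact box infima of $Q_{m,n}-Q_{1,0}$, $Q_{m,n}-Q_{0,1}$ and $Q_{m,n}-(Q_{1,0}+Q_{0,1})/2$, so that $64$ boxes suffice.

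Two small corrections. First, $H_{12}=-\sum|mn|e^{-\pi Q_{m,n}}\le0$, not $\ge0$, so what you need is an upper bound on $|H_{12}|$. Second, the most delicate check is actually $H_{22}>17/500$ near the corner $(39/100,23/25)$, not $H_{11}$ near $y=2$. There $H_{22}\approx0.0344$, so the relative margin is only about $1\%$, while $\partial_xH_{22}\approx-0.13$; your planned refinement near that corner must be fine enough to resolve this.
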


\begin{proof}
The determinant-one matrix of $Q_{m,n}$ has trace satisfying
\[
 y+\frac{1+x^2}{y}
 \le\frac{51521}{20000}<\frac{47}{100}+\frac{100}{47}.
\]
Hence $Q_{m,n}\ge(47/100)(m^2+n^2)$. Each omitted weighted
sum is bounded by
\[
 8\sum_{r\ge5}r^3e^{-47\pi r^2/100}<2\cdot10^{-12}.
\]
Use the $64$ rectangles
\[
 K_{jk}=\left[\frac{39j}{400},\frac{39(j+1)}{400}\right]
 \times\left[\frac{368+27k}{400},\frac{395+27k}{400}\right],
 \quad0\le j<4,\quad0\le k<16.
\]
Let $\Lambda_4$ contain the pairs $0\le m\le4$, $|n|\le4$,
$m+n$ even, with $m>0$ or $n>0$. On each $K=K_{jk}$, set
\[
\begin{aligned}
 u_K&=\sum_{\substack{(m,n)\in\Lambda_4\\m\ne0}}
 m^2e^{-\pi\inf_K(Q_{m,n}-Q_{1,0})}+\frac{10^{-12}}{\inf_KA},\\
 v_K&=\sum_{\substack{(m,n)\in\Lambda_4\\n\ne0}}
 n^2e^{-\pi\inf_K(Q_{m,n}-Q_{0,1})}+\frac{10^{-12}}{\inf_KB},\\
 w_K&=\sum_{\substack{(m,n)\in\Lambda_4\\mn\ne0}}|mn|
 e^{-\pi\inf_K\{Q_{m,n}-(Q_{1,0}+Q_{0,1})/2\}}
 +\frac{10^{-12}}{\sqrt{(\inf_KA)(\inf_KB)}}.
\end{aligned}
\]
All infima here have the form
$\inf_K\{ay+(ux^2-2vx+w)/y\}$, with $a,u\ge0$:
minimize the quadratic over the $x$ interval, then test the two
$y$ endpoints and $\sqrt{(ux^2-2vx+w)/a}$ when it belongs to
the $y$ interval and its numerator and $a$ are positive. Convexity gives $\inf_K A$ and $\inf_K B$
from the corner maxima of their exponents. The resulting finite
inequalities on every $K$ are
\[
\begin{aligned}
 2(\inf_KA)(1-u_K)&>\frac3{5000},\\
 2(\inf_KB)(1-v_K)&>\frac{17}{500},\\
 4(\inf_KA)(\inf_KB)\{(1-u_K)(1-v_K)-w_K^2\}
 &>\frac1{10000}.
\end{aligned}
\]
These bound $H_{11}$, $H_{22}$, and $\det H$, respectively.
Sylvester's criterion completes the proof.
\end{proof}

The next lemma gives the centered competitors and the global energy gaps.

\begin{lemma}[Energy comparisons]\label{lem:cert-global}
Let
\[
\begin{aligned}
 (\mathcal I_1,\mathcal I_2,\mathcal I_3)&=([7/40,9/50],\,[9/50,1/5],\,[1/5,6/25]),\\
 (\mathcal I_4,\mathcal I_5,\mathcal I_6)&=([6/25,3/10],\,[3/10,37/100],\,[37/100,2/5]),\\
 (t_1,\ldots,t_6)&=(371/1000,\,9/25,\,33/100,\,27/100,\,17/100,\,0),
\end{aligned}
\]
and put $z_j=t_j+i\sqrt{1-t_j^2}$. For $1\leq j\leq6$,
$\alpha\in \mathcal I_j$, every reduced shape with $39/100\leq x\leq1/2$,
$y\leq2$, and every displacement satisfy
\[
 E_\alpha(x+iy;a,b)-E_\alpha(z_j;1/2,1/2)>3\cdot10^{-8}.
\]
For $2/5\leq\alpha\leq23/25$, every reduced shape with
$3/10\leq x\leq1/2$, $y\leq2$, and every displacement satisfy
\[
 E_\alpha(x+iy;a,b)-E_\alpha(i;1/2,1/2)>5\cdot10^{-6}.
\]
For $1723/10000\leq\alpha\leq7/40$, the energy difference
\[
 E_\alpha(x+iy;a,b)
 -E_\alpha\left(\frac{3+i\sqrt{55}}8;\frac12,\frac12\right)
\]
is greater than $1.9\cdot10^{-8}$ when
$39/100\leq x\leq19/40$, $y\leq2$, and greater than
$2.4\cdot10^{-8}$ when $19/40\leq x\leq1/2$,
$891/1000\leq y\leq2$, for every reduced shape in the
indicated regions and every displacement.
\end{lemma}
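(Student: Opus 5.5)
The plan is to reduce every assertion of Lemma~\ref{lem:cert-global} to finitely many verified inequalities on shape boxes, in the same way as Lemma~\ref{lem:cert-strip}. The first step uses affine dependence on $\alpha$. For fixed $(z;a,b)$ and fixed competitor $z_j$, the difference
\[
 E_\alpha(x+iy;a,b)-E_\alpha(z_j;1/2,1/2)
\]
is affine in $\alpha$. The same is true after minimizing over the displacement, except that a minimum of affine functions is concave. Concavity is enough, because a concave function on an interval is bounded below by its endpoint values. It therefore suffices to prove each gap at the two endpoints of $\mathcal I_j$, of $[2/5,23/25]$, and of $[1723/10000,7/40]$. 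That gives at most fourteen fixed couplings.

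The second step eliminates the displacement at a fixed coupling. Here I would apply Lemma~\ref{lem:four}, whose hypotheses $A^2>4CD$ and $B^2>4CD$ hold on the reduced region $y\le2$, as checked after its proof. Its right-hand side $L_\alpha(x,y)$ is a displacement-free lower bound for $E_\alpha(x+iy;\cdot,\cdot)$. The omitted Fourier pairs are handled as follows. The pairs with $\max(|m|,|n|)\le2$ are kept with weight $1-\alpha\ge0$. All remaining tails are discarded, which is legitimate because they are nonnegative.

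The competitor values $E_\alpha(z_j;1/2,1/2)$ are computed from the centered identity \eqref{eq:centered-identity}. I would use the truncations $\theta_4$ and $J_4$, with tail error bounded by \eqref{eq:common-value-tail}; since $|z_j|=1$, one may take $c\ge\sqrt3/2$ there. For the special competitor $(3+i\sqrt{55})/8$ we have $|z|=1$, and the same bounds apply.

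The third step verifies $L_\alpha(x,y)-E_\alpha(z_j;1/2,1/2)>\text{gap}$ on the compact shape region, namely $x\in[39/100,1/2]$ (or $[3/10,1/2]$, or the two bridge regions), $x^2+y^2\ge1$, $y\le2$. I would subdivide this region into small rectangles. On each rectangle, the value at a sample point minus a Lipschitz correction bounds $L_\alpha$ from below. The Lipschitz constant comes from the derivative enclosures in the proof of Lemma~\ref{lem:four}, obtained by multiplying the derivatives of $A,B,C,D$ by the cosine factors, or by $[-1,1]$ at branch changes. Monotonicity can also be used: the $e^{-\pi Q}$ terms are monotone in $y$ once $y$ is large.

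For $y$ near $2$, and in the far part of the region generally, I would use the coarse bound from the proof of Lemma~\ref{lem:cusp} instead. That bound is increasing in $y$ and already exceeds the competitor energies there, so it disposes of the far region cheaply.

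The main obstacle is the size of the gaps, which are as small as $1.9\cdot10^{-8}$. These occur near the corner $x\approx19/40$, $y\approx891/1000$, where the bridge region approaches the triangular cap, and near $x=39/100$ for $\alpha$ close to $\alpha_a$. In those places the Fourier lower bound of Lemma~\ref{lem:four} is nearly sharp: it loses only the displacement-dependent parts of the pairs beyond the first four, and that loss is of order $e^{-\pi Q_{2,1}}\cdot\alpha$. I would handle this by adaptive subdivision near the critical corners, with rectangles of width about $10^{-5}$, together with second-order Taylor bounds in place of Lipschitz bounds. I would first check numerically that $L_\alpha$ genuinely exceeds the competitor by more than the stated margin at the worst boxes. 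If it does not, I would also retain the $(2,1)$ and $(1,2)$ pairs in the displacement minimization, treated as a perturbation with sign $\pm\alpha e^{-\pi Q}$.
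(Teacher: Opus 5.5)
Your plan is correct and is essentially the paper's proof. It reduces to the endpoint couplings by affine dependence on $\alpha$, eliminates the displacement with the lower bound \eqref{eq:lower} (keeping only the $|m|,|n|\le2$ remainder with weight $1-\alpha$), and bounds each competitor from above by $\theta_4+\alpha J_4$ plus a ring tail. It then certifies the gap on a bisected rectangle cover of the shape region, using center values minus mean-value (Lipschitz) corrections that stay valid across branch changes.

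The differences are only in implementation. The paper also uses the bound obtained by substituting the corner minima $c^-_{m,n}(K)=\exp[-\pi\max_{\text{corners}}Q_{m,n}]$ into \eqref{eq:lower}; this is valid because $Q_{m,n}$ is convex and the retained energy is increasing in each coefficient. It covers the whole region up to $y=2$ by bisection of depth at most $15$ with only these first-order bounds. So it never needs your cusp-bound shortcut, second-order Taylor bounds, or the $(2,1),(1,2)$ fallback.
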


\begin{proof}
Let $f_\alpha$ be the lower bound in \eqref{eq:lower}; for the
square comparison omit its finite remainder. For a rectangle
$K=[x_-,x_+]\times[y_-,y_+]$, first discard it if
$x_+^2+y_+^2<1$, and otherwise replace $y_-$ by
$\max\{y_-,\sqrt{1-x_+^2}\}$. The clipped rectangle contains
$K\cap\{x^2+y^2\ge1\}$. Convexity gives
\[
 c^-_{m,n}(K)=\exp\!\left[-\pi
 \max_{\substack{x\in\{x_-,x_+\}\\y\in\{y_-,y_+\}}}Q_{m,n}(x,y)\right].
\]
The retained energy is increasing in every coefficient, since
$1+\alpha\cos(2\pi(ma+nb))\ge0$. Substitution of the
$c^-_{m,n}(K)$ in \eqref{eq:lower} therefore gives a lower bound
$L_\alpha(K)$ for $f_\alpha$. Lemma~\ref{lem:four} also gives
\[
 M_\alpha(K)=\underline{f_\alpha(x_c,y_c)}
 -r_x\sup_K|\partial_xf_\alpha|-r_y\sup_K|\partial_yf_\alpha|
 \le\inf_Kf_\alpha,
\]
where $(x_c,y_c)$ and $(r_x,r_y)$ are its center and half-widths.
The local Lipschitz property justifies this estimate across branch
changes. A test is omitted if its coefficient, denominator, or
radicand conditions are not yet verified.

At each coupling endpoint, start with
\[
\begin{array}{ll}
 [39/100,1/2]\times[43/50,2]&\text{for the six arc comparisons},\\
 {}[3/10,1/2]\times[43/50,2]&\text{for the square comparison},\\
 {}[39/100,19/40]\times[43/50,2]&\text{for the first bridge},\\
 {}[19/40,1/2]\times[891/1000,2]&\text{for the second bridge}.
\end{array}
\]
Accept a clipped rectangle precisely when
\begin{equation}\label{eq:energy-box-test}
 \max\{L_\alpha(K),M_\alpha(K)\}
 -\overline{E_\alpha(z_t;1/2,1/2)}>\delta,
 \qquad z_t=t+i\sqrt{1-t^2},
\end{equation}
where $\delta$ is the asserted gap. Bisect each remaining rectangle
along its longer side, taking the $x$ side in a tie, and repeat.
The tests terminate by depth $15$. In the four groups above, the
smallest accepted lower bounds exceed, respectively,
\[
 \frac1{4000000},\qquad\frac7{125000},\qquad
 \frac{11}{5000000},\qquad\frac{11}{10000000}.
\]
Thus \eqref{eq:energy-box-test} holds on a finite cover of each
reduced region. Only $|m|,|n|\le2$ are retained in the positive
remainder of \eqref{eq:lower}; all omitted terms are nonnegative.
The competitor upper bounds use $\theta_4+\alpha J_4$ and
\[
 16\sum_{r\ge5}re^{-9\pi r^2/10}<10^{-27},
\]
since the competitor heights exceed $9/10$ and
$Q_{m,n}\ge y\max(m^2,n^2)$ on their unit-circle shapes.
Finally, the energy difference from a fixed competitor is affine in
$\alpha$. Its endpoint bounds imply each assertion on the entire
coupling interval.
\end{proof}

The next lemma uses $\mathcal R$, $\operatorname{osc}_{a,b}$,
$\nabla_{a,b}$, and the Euclidean norm $\|\cdot\|_2$ defined in the
proof of Lemma~\ref{lem:hex-shift-localization}.

\begin{lemma}[Displacement localization]\label{lem:cert-localization}
Let $19/40\leq x\leq1/2$ and
$0\leq y-\frac{\sqrt3}{2}\leq1/40$. Then the coefficient bounds
\eqref{eq:hex-cap-coarse} and the affine enclosures
\eqref{eq:hex-cap-affine-centers} hold, and
\[
 \operatorname{osc}_{a,b}\mathcal R<\frac{131}{500000},\qquad
 \sup_{(a,b)\in[0,1]^2}\|\nabla_{a,b}\mathcal R(a,b)\|_2
 <\frac{39}{25000}.
\]
\end{lemma}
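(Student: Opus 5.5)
The plan is a computer-assisted interval argument over a cover of the cap $K_0=[19/40,1/2]\times[\sqrt3/2,\sqrt3/2+1/40]$, in the same style as Lemma~\ref{lem:cert-strip} and Lemma~\ref{lem:cert-global}. Throughout, I will use the explicit formulas
\[
 A=e^{-\pi Q_{1,0}},\qquad B=e^{-\pi Q_{0,1}},\qquad C=e^{-\pi Q_{1,1}},
\]
together with $P,Q,R$ and the identities \eqref{eq:hex-phasor-centers}.

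\textbf{Step 1: tail bound.} On $K_0$ the trace $y+(1+x^2)/y$ is at most about $2.31$. Arguing as in the proof of Lemma~\ref{lem:cert-strip}, this gives $Q_{m,n}\ge c(m^2+n^2)$ for some $c>0.55$. Then \eqref{eq:common-ring-tail} bounds all terms with $\max(|m|,|n|)\ge5$, including the weights $|m|,|n|$ that appear in the gradient, by less than $10^{-14}$.

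\textbf{Step 2: coarse bounds \eqref{eq:hex-cap-coarse}.} Split $K_0$ into small rectangles. On each rectangle, convexity of $Q_{m,n}$ in $(x,y)$ gives two-sided enclosures for $A,B,C$ from corner values, exactly as in Lemma~\ref{lem:cert-global}. Monotone interval arithmetic then encloses $P,Q,R$ and the three cosines in \eqref{eq:hex-phasor-centers}, and each resulting enclosure is checked against the stated rational bounds.

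\textbf{Step 3: affine enclosures \eqref{eq:hex-cap-affine-centers}.} The centers $a_0=\arccos(\cos U_0)/(2\pi)$ and $b_0$ are smooth functions of $(x,y)$. At the corner $(1/2,\sqrt3/2)$ they equal exactly $1/3$. I will bound $\partial_x a_0$, $\partial_y a_0$, $\partial_x b_0$, $\partial_y b_0$ on each rectangle by interval arithmetic applied to their closed-form derivatives. Combined with a midpoint evaluation, this encloses the deviation from the affine model. The width $1/40$ is small, so a modest number of subdivisions should suffice.

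\textbf{Step 4: oscillation and gradient of $\mathcal R$.} The dominant terms of $\mathcal R$ are the pairs $\pm(1,-1)$, $\pm(2,1)$ and $\pm(1,2)$, whose weights are about $e^{-\pi\cdot 2\sqrt3}\approx 1.9\times10^{-5}$ at the triangular point.
\begin{itemize}
\item For the oscillation, I bound
\[
 \operatorname{osc}\mathcal R\le 2\sum_{(m,n)\notin S} e^{-\pi Q_{m,n}},
\]
where $S=\{(0,0),\pm(1,0),\pm(0,1),\pm(1,1)\}$ is the retained set, using the truncated sum over $|m|,|n|\le4$ plus the tail. The leading six terms give roughly $2\cdot 6\cdot 1.9\times10^{-5}\approx 2.3\times10^{-4}$, which is under $131/500000=2.62\times10^{-4}$.
\item For the gradient, I bound
\[
 \|\nabla\mathcal R\|_2\le 2\pi\Bigl[\Bigl(\sum |m|e^{-\pi Q_{m,n}}\Bigr)^2+\Bigl(\sum |n|e^{-\pi Q_{m,n}}\Bigr)^2\Bigr]^{1/2}.
\]
\end{itemize}

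\textbf{Main obstacle.} The crude bounds in Step 4 lose the phase cancellation between the terms. The oscillation margin is only about $15\%$, and it shrinks as $y$ rises within the cap, where the $(1,-1)$ and $(2,1)$ weights grow. If the crude bound fails, I will first try pairing the $(1,-1)$ term with the $(2,1)$ and $(1,2)$ terms into a single trigonometric polynomial and bounding its range exactly by maximizing over the torus. If that still does not close, I will subdivide the displacement torus and evaluate these terms together with the gradient on the subdivision. Uniform control over the whole cap is then obtained by monotonicity of each weight in $Q_{m,n}$ across the rectangles.
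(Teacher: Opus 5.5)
Your plan is correct and is essentially the paper's argument. The common ingredients are monotone/corner enclosures of $A,B,C,P,Q,R$ and the three cosines on a fine cover of the cap, and a box mean-value test for the affine enclosures; the paper applies that test to the cosine differences $F_i$ and then uses monotonicity of cosine, rather than to $a_0,b_0$ via $\arccos$. The third shared ingredient is crude absolute-value bounds for $\mathcal R$, taken over the truncated lattice plus the ring tail.

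Your phase-cancellation fallback is unnecessary. If the weights are maximized over the whole cap at once, the crude oscillation bound is about $2.8\cdot10^{-4}$ and fails. A single bisection of each side of the cap, as the paper does, brings it to about $2.61\cdot10^{-4}<131/500000$, and the gradient bound also closes on those four rectangles.
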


\begin{proof}
The phasor lengths $P,Q,R$ defined in the proof of
Lemma~\ref{lem:hex-shift-localization} have the exact expressions
\[
 P=e^{-\pi x/y},\qquad
 Q=e^{-\pi\{y+(x^2-x)/y\}},\qquad R=e^{-\pi(1-x)/y}.
\]
Corner bounds and one-variable extrema prove the coefficient
bounds in \eqref{eq:hex-cap-coarse}. For its cosine bounds, use
\eqref{eq:hex-phasor-centers} on the rectangles
\begin{equation}\label{eq:localization-cover}
 K_{jk}=\left[\frac{19}{40}+\frac j{1280},
                 \frac{19}{40}+\frac{j+1}{1280}\right]
 \times\left[\frac{\sqrt3}{2}+\frac k{1280},
                 \frac{\sqrt3}{2}+\frac{k+1}{1280}\right],
 \quad0\le j,k<32.
\end{equation}
The first two cosine ranges lie in $(-1,1)$, which also proves
the strict triangle inequalities.

Let $\Lambda'_4$ consist of $0\le m\le4$, $|n|\le4$,
$m>0$ or $n>0$, with $(1,0),(0,1),(1,1)$ removed.
The determinant-one quadratic form has trace less than $231/100$;
thus $Q_{m,n}\ge(57/100)(m^2+n^2)$. Bisecting both cap intervals
once gives four rectangles $K$. On each of them,
\[
\begin{aligned}
 \operatorname{osc}_{a,b}\mathcal R
 &\le4\sum_{(m,n)\in\Lambda'_4}e^{-\pi\inf_KQ_{m,n}}
       +2\cdot10^{-16}<\frac{131}{500000},\\
 \sup_{a,b}\|\nabla_{a,b}\mathcal R\|_2
 &\le4\pi\sum_{(m,n)\in\Lambda'_4}\sqrt{m^2+n^2}
        e^{-\pi\inf_KQ_{m,n}}+2\cdot10^{-14}<\frac{39}{25000}.
\end{aligned}
\]
The two error terms follow from \eqref{eq:common-ring-tail} with
$(c,k)=(57/100,1)$ and $(57/100,2)$. The infima are evaluated
as in Lemma~\ref{lem:cert-strip}.

For the affine enclosures, apply the box mean-value estimate on
\eqref{eq:localization-cover} to
\[
\begin{aligned}
 F_1&=\cos\!\left(2\pi\left[\frac13-\left(x-\frac12\right)
 -\frac{29}{50}\left(y-\frac{\sqrt3}{2}\right)-\frac1{200}\right]\right)-\cos U_0,\\
 F_2&=\cos U_0-\cos\!\left(2\pi\left[\frac13-\left(x-\frac12\right)
 -\frac{29}{50}\left(y-\frac{\sqrt3}{2}\right)+\frac1{200}\right]\right),\\
 F_3&=\cos\!\left(2\pi\left[\frac13+\frac{29}{25}
 \left(y-\frac{\sqrt3}{2}\right)-\frac7{2000}\right]\right)-\cos V_0,\\
 F_4&=\cos V_0-\cos\!\left(2\pi\left[\frac13+\frac{29}{25}
 \left(y-\frac{\sqrt3}{2}\right)+\frac7{2000}\right]\right).
\end{aligned}
\]
The finite inequalities on every rectangle, with center $c_K$, are
\[
 \underline{F_i(c_K)}-\frac1{2560}
   \bigl(\sup_K|\partial_xF_i|+\sup_K|\partial_yF_i|\bigr)
 >\frac1{20000},\qquad1\le i\le4.
\]
All four affine endpoints lie in $(0,1/2)$. Monotonicity of cosine
on $[0,\pi]$ proves \eqref{eq:hex-cap-affine-centers}.
\end{proof}

The next lemma certifies displacement control and joint convexity.

\begin{lemma}[Hessian estimates]\label{lem:cert-hessians}
Throughout $19/40\leq x\leq1/2$,
$0\leq y-\frac{\sqrt3}{2}\leq1/40$,
\[
 D^2_{a,b}J(x+iy;a,b)\geq\frac{13}{20}I_2
 \qquad\text{if }|a-a_0|,|b-b_0|\leq\frac3{100}.
\]
The full Hessian of $E_\alpha$ in $(x,y,a,b)$ is positive definite at
$\alpha=4/25$ on the affine prism
\eqref{eq:hex-affine-prism} enlarged to
$|r_a|,|r_b|\leq1/100$, and at $\alpha=7/40$ on
\eqref{eq:hex-affine-prism} itself.
\end{lemma}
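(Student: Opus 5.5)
The statement is a finite certificate of the same kind as Lemmas~\ref{lem:cert-strip}--\ref{lem:cert-localization}. I would prove it by truncating the series to $|m|,|n|\le4$, bounding the tail with \eqref{eq:common-ring-tail}, and verifying the resulting matrix inequalities on a finite cover of the relevant compact sets.

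\emph{Step 1: truncation.} On the cap $19/40\le x\le1/2$, $0\le y-\sqrt3/2\le1/40$, the determinant-one form has trace below $231/100$. Hence, as in Lemma~\ref{lem:cert-localization}, $Q_{m,n}\ge(57/100)(m^2+n^2)$. Every entry of the blocks \eqref{eq:hex-joint-hessian} is bounded by $e^{-\pi Q_{m,n}}$ times a polynomial of degree at most $4$ in $(m,n)$. The coefficient factors $|\nabla Q_{m,n}|^2$ and $|D^2Q_{m,n}|$ are uniformly bounded by constants times $r^4$ and $r^2$, since $y$ is bounded below by $\sqrt3/2$.

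So \eqref{eq:common-ring-tail} with $c=57/100$ and $k\le5$ bounds the Hessian tail outside $|m|,|n|\le4$ in operator norm by a number of order $10^{-12}$. This is the tail estimate \eqref{eq:hex-joint-hessian-tail}. What remains is to show that the $81$-term truncated Hessian exceeds that margin.

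\emph{Step 2: displacement Hessian.} By \eqref{eq:hex-joint-hessian},
\[
D^2_{a,b}J=-4\pi^2\sum e^{-\pi Q_{m,n}}\cos(2\pi(ma+nb))\begin{pmatrix}m^2&mn\\mn&n^2\end{pmatrix}.
\]
The dominant part comes from the three phasor pairs $\pm(1,0),\pm(0,1),\pm(1,1)$, with coefficients $A,B,C$ of size about $1/40$. At $(a_0,b_0)$ all three cosines are at most about $-2/9$, by \eqref{eq:hex-cap-coarse}. So the leading matrix has smallest eigenvalue of order $4\pi^2\cdot\min(A,B,C)\cdot(\text{const})$, comfortably above $13/20$.

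I would cover the cap by the $32\times32$ rectangles \eqref{eq:localization-cover}. On each one I would use $a_0,b_0$ enclosed by \eqref{eq:hex-cap-affine-centers} and let $(a,b)$ range over the box $|a-a_0|,|b-b_0|\le3/100$. Each cosine then gets an interval enclosure, and each coefficient gets corner bounds as in Lemma~\ref{lem:cert-global}. The non-phasor terms are bounded crudely by absolute values, using $\operatorname{osc}$-type sums as in Lemma~\ref{lem:cert-localization}. I would then check $H_{11}-13/20>0$ and $\det(H-\tfrac{13}{20}I)>0$ by Sylvester's criterion.

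\emph{Step 3: the $4\times4$ joint Hessian at $\alpha=4/25$ and $7/40$.} I would parametrize the prism by $(x,y,r_a,r_b)$ and cover it by boxes in all four variables. The $r$-intervals are subdivided only mildly, since they are short, the shape boxes are as above, and the enlarged prism $|r_a|,|r_b|\le1/100$ is used at $\alpha=4/25$. On each box I would form an interval matrix for the truncated Hessian in $(x,y,a,b)$. Positive definiteness would be certified by taking the exact Hessian $H_0$ at the box center, computing $\lambda_{\min}(H_0)$ from below via an $LDL^{\mathsf T}$ factorization with rounding control, and showing it exceeds the spectral-norm radius of the interval matrix plus the tail.

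As a sanity anchor, at $p_0$ the $\Phi$-symmetry of the proof of Theorem~\ref{cor:hex-cap-comparison} forces the shape block to be a multiple of $I_2$ and strongly constrains the coupling block. This gives a clean check that the center Hessian is positive definite, with margin.

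\emph{Main obstacle.} The hard step is Step 3 at $\alpha=4/25$ on the enlarged prism. The shape block of $D^2\theta$ at the hexagonal point is only moderately positive, and the coupling block $H_{12}$, with entries of size $\alpha\pi^2\sin(\cdot)$, eats into it through the Schur complement $H_{11}-H_{12}H_{22}^{-1}H_{21}$. I would therefore test the Schur complement directly, using the Step 2 lower bound on $H_{22}$ to control $H_{22}^{-1}$. If margins fail on some boxes, I would bisect adaptively along the variable with the largest derivative bound, as in Lemma~\ref{lem:cert-global}.

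Convexity in $\alpha$ is then not needed here: the statement only asks for the two endpoint couplings, and Lemma~\ref{lem:hex-convexity} supplies the interpolation.
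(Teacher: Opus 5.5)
\textbf{Step 2 fails as you set it up, though the overall plan matches the paper's.} The shared architecture is truncation at $|m|,|n|\le4$ with a ring tail bound, finite box covers, Sylvester's criterion for the $2\times2$ block, and a perturbative definiteness test for the $4\times4$ block. Your Weyl test ($\lambda_{\min}(H_0)$ exceeding radius plus tail) is a legitimate substitute for the paper's dyadic congruence $P$, built from the midpoint eigendecomposition, followed by strict diagonal dominance of $P^{\mathsf T}HP$.

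The problem in Step 2 is the center enclosures. The bounds \eqref{eq:hex-cap-affine-centers} have fixed widths $1/200$ and $7/2000$, which do not shrink with your $32\times32$ cover. You would therefore have to verify $D^2_{a,b}J\ge\frac{13}{20}I_2$ on the larger set $|a-\ell_a|\le\frac7{200}$, $|b-\ell_b|\le\frac{67}{2000}$ around the affine centers $\ell_a,\ell_b$, and there the bound is false.
\begin{itemize}
\item At $z=e^{i\pi/3}$ one has $A=B=C\approx0.02658$, $8\pi^2A\approx2.099$, and $\ell_a=\ell_b=a_0=b_0=\frac13$.
\item At $(a,b)=(\frac13-\frac7{200},\frac13-\frac{67}{2000})$ the first shell gives $\lambda_{\min}\approx2.099\cdot0.3035\approx0.637$. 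The second shell lowers this to about $0.633<0.65$, so no refinement can make your test pass.
\item Even with the exact center, the minimum at the hexagonal shape is only about $0.686$, attained at the corner $(\frac13-\frac3{100},\frac13-\frac3{100})$. So the leading matrix is not ``comfortably above $13/20$''. The margin is about $0.036$, and the center must be known far more precisely than \eqref{eq:hex-cap-affine-centers} provides.
\end{itemize}
The repair is what the paper does:
\begin{itemize}
\item On each shape box, enclose $\cos U_0,\cos V_0$, hence $a_0,b_0$, directly from the exact identities \eqref{eq:hex-phasor-centers}. These enclosures shrink with the box.
\item Parametrize $a=a_0+\frac3{100}(2u-1)$, $b=b_0+\frac3{100}(2v-1)$, and bisect in all of $(x,y,u,v)$.
\end{itemize}
The displacement box must be subdivided too. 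Over the whole box, the interval enclosures of $\cos U$, $\cos V$ and $\cos(U+V)$ decouple, and the lower bound for $\det(D^2_{a,b}J-\frac{13}{20}I_2)$ comes out negative.

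\textbf{Your Schur-complement fallback for Step 3 has the same kind of defect.} You propose bounding $H_{22}^{-1}$ by the scalar estimate $H_{22}\succeq\alpha\frac{13}{20}I_2$ from Step 2. This discards the fact that the large right singular direction of $H_{12}$ is the stiff direction $(1,1)$ of $H_{22}$, and the resulting test fails already at $p_0$:
\begin{itemize}
\item For $\alpha=\frac7{40}$, at $p_0$ one has $H_{11}\approx0.58\,I_2$ and $\|H_{12}\|_2^2\approx0.20$.
\item So $\|H_{12}\|_2^2/(\alpha\cdot\frac{13}{20})\approx1.8$, far above $0.58$.
\item The exact Schur complement is about $(0.635-2.4\alpha)I_2\approx0.21\,I_2$, which is positive.
\end{itemize}
A Schur test therefore has to use the full interval $2\times2$ block $H_{22}$, e.g.\ its explicit inverse. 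Otherwise, keep your Weyl test or the paper's preconditioned diagonal dominance, both of which absorb this coupling automatically.
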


\begin{proof}
The displacement Hessian is
\[
 D^2_{a,b}J=-4\pi^2\sum_{m,n\in\mathbb Z}e^{-\pi Q_{m,n}}
 \cos\bigl(2\pi(ma+nb)\bigr)
 \begin{pmatrix}m^2&mn\\mn&n^2\end{pmatrix}.
\]
For the joint Hessian use \eqref{eq:hex-joint-hessian}.
On the rational outer shape rectangle
\[
 \frac{4749}{10000}\le x\le\frac{5001}{10000},\qquad
 \frac{433}{500}\le y\le\frac{8911}{10000},
\]
completing the square gives $Q_{m,n}\ge(17/20)r^2$.
The first and second shape derivatives of $Q_{m,n}$ are bounded
by $5r^2$ and $8r^2$. Every summand Hessian entry is therefore
bounded by $900r^4e^{-\pi Q_{m,n}}$. By
\eqref{eq:common-ring-tail}, the omitted operator norm satisfies
\begin{equation}\label{eq:hex-joint-hessian-tail}
 28800\sum_{r\ge5}r^5e^{-51r^2/20}<2\cdot10^{-20}.
\end{equation}
Truncate at $|m|,|n|\le4$ and enlarge every entry by
$[-9\cdot10^{-13},9\cdot10^{-13}]$.

Parameterize the joint prisms by
\[
 x=\frac{19+s}{40},\qquad y=\frac{\sqrt3}{2}+\frac t{40},\qquad
 r_a=\rho_a(2u-1),\quad r_b=\rho_b(2v-1),\qquad
 (s,t,u,v)\in[0,1]^4,
\]
with $(\rho_a,\rho_b)=(1/100,1/100)$ at $\alpha=4/25$ and
$(3/400,3/500)$ at $\alpha=7/40$; $a,b$ are given by
\eqref{eq:hex-affine-prism}. For the displacement estimate use
$a=a_0+3(2u-1)/100$ and $b=b_0+3(2v-1)/100$ instead.
The centers are enclosed by \eqref{eq:hex-phasor-centers} and
directed cosine tests on $(0,\pi)$.

Set $D=D^2_{a,b}J-(13/20)I_2$. Starting with $[0,1]^4$,
bisect a failing box of depth $d$ in coordinate $1+(d\bmod4)$.
For $D$, accept a box when
\[
 \underline{D_{11}}>0,\qquad
 \underline{D_{11}D_{22}-D_{12}^2}>0.
\]
For the joint matrix $H=D^2_{x,y,a,b}E_\alpha$, let $M$ be the
entrywise midpoint of its interval enclosure. Write its spectral
decomposition as
\[
 M=V\operatorname{diag}(\lambda_1,\ldots,\lambda_4)V^{\mathsf T},
 \qquad V^{\mathsf T}V=I_4,
\]
where $\lambda_i$ are the eigenvalues of $M$. If $\lambda_i>0$
for $1\le i\le4$, round the entries of
$V\operatorname{diag}(\lambda_i^{-1/2})$ to multiples of $2^{-20}$
to obtain a fixed matrix $P$. Accept the box when
\begin{equation}\label{eq:hessian-box-test}
 \underline{(P^{\mathsf T}HP)_{ii}}
 -\sum_{j\ne i}\max\bigl\{
 |\underline{(P^{\mathsf T}HP)_{ij}}|,
 |\overline{(P^{\mathsf T}HP)_{ij}}|\bigr\}>0,
 \qquad1\le i\le4.
\end{equation}
Only the exact dyadic entries of $P$ enter this test; the accuracy
of its construction is immaterial to the implication.
The three subdivisions terminate within $31$ cyclic bisections,
and all acceptance margins exceed $10^{-8}$.
All inequalities include the entrywise enlargement above.
Sylvester's criterion proves $D\succ0$. Strict diagonal dominance
proves $P^{\mathsf T}HP\succ0$, hence $H\succ0$, on every box.
The completes the proof.
\end{proof}

Finally, we give the interval information that locates the transitions.
The first-crossing tests concern the actual centered minima, whose
uniqueness is supplied by the our previous result \cite{LW}.

\begin{lemma}[Transition enclosures]\label{lem:cert-thresholds}
The ratios \eqref{eq:centered-thresholds} satisfy
\[
 0.373<\alpha_b<0.374,\qquad0.925<\alpha_c<0.926,
\]
and round to $0.3732155079$ and $0.9256496974$, respectively,
to ten decimal places. The centered energy satisfies
\[
\begin{aligned}
 \min_{\tau\in\mathbb H}E_{1723/10000}(\tau;1/2,1/2)
 -E_{1723/10000}(e^{i\pi/3};1/3,1/3)
     &>10^{-6},\\
 \min_{\tau\in\mathbb H}E_{7/40}(\tau;1/2,1/2)
 -E_{7/40}(e^{i\pi/3};1/3,1/3)
     &<-7\cdot10^{-5}.
\end{aligned}
\]
The left-hand side of \eqref{eq:crossing-residual} is negative
and positive, respectively, at
\[
 x=0.3754117719806,\qquad x=0.3754117719808.
\]
For every $x$ between these endpoints, the quotient
\[
 -\frac{\frac{d}{dx}\theta(1;x+i\sqrt{1-x^2})}
        {\frac{d}{dx}J(x+i\sqrt{1-x^2};1/2,1/2)}
\]
is well defined and lies in
$(0.1723565872424312,0.1723565872443172)$.
\end{lemma}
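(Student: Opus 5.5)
The plan is to reduce every assertion to finitely many evaluations of truncated Gaussian sums with rigorous tail bounds, carried out in interval arithmetic. The main tools are the centered identity \eqref{eq:centered-identity}, the truncations $\theta_4$, $J_4$, and the ring tail \eqref{eq:common-ring-tail}.

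\textbf{The thresholds $\alpha_b$ and $\alpha_c$.} At $z=i$ we have $Q_{m,n}=m^2+n^2$. The second derivatives $\theta_{xx},\theta_{yy},J_{xx},J_{yy}$ are obtained by differentiating termwise, as in \eqref{eq:hex-joint-hessian}. Each summand is bounded by a multiple of $r^4e^{-\pi r^2}$. I would truncate at $|m|,|n|\le 4$, or further if ten digits require it, and bound the remainder by \eqref{eq:common-ring-tail} with $c=1$. The ratios in \eqref{eq:centered-thresholds} then follow by interval division. This step is routine, since $J_{xx}(i;1/2,1/2)<0$ and $J_{yy}(i;1/2,1/2)<0$ are separated from zero.

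\textbf{The triangular energy and the sign at $7/40$.} The triangular value $E_\alpha(e^{i\pi/3};1/3,1/3)$ is a single enclosure. For the inequality at $7/40$, an upper bound for the centered minimum suffices. I would evaluate at the explicit shape $(3+i\sqrt{55})/8$, whose abscissa $x=3/8$ is close to $x_*$. I expect a margin near $10^{-4}$, which comfortably exceeds $7\cdot10^{-5}$.

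\textbf{The sign at $1723/10000$.} Here a lower bound for $\min_\tau E_\alpha(\tau;1/2,1/2)$ is needed with margin $10^{-6}$. I would use \cite[Theorem~2.1]{LW}, through \eqref{eq:centered-identity}: for $\alpha<\alpha_b$ the centered minimizer lies on the arc $z=e^{i\varphi}$, $\varphi\in[\pi/3,\pi/2]$. This reduces the problem to a one-variable function $g(x)=E_\alpha(x+i\sqrt{1-x^2};1/2,1/2)$ on $[0,1/2]$. The steps are:
\begin{enumerate}
\item Cover $[0,1/2]$ by small subintervals.
\item On each subinterval, use the mean-value estimate
\[
g\ge \underline{g(c)}-r\,\sup|g'|.
\]
Near the minimizer, where $g'\approx0$, switch to the second-order estimate $g(c)+g'(c)(x-c)-\tfrac12 r^2\sup|g''|$. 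This avoids a prohibitively fine subdivision.
\end{enumerate}
I regard this step as the main obstacle. The gap is only about $10^{-6}$ while $g$ is flat near $x_*\approx0.375$, so the subdivision must be fine and the enclosures of $g'$ and $g''$ must be tight.

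\textbf{The residual and the quotient near $x_*$.} The two test points in the statement differ by $2\cdot10^{-13}$. I would therefore evaluate the left side of \eqref{eq:crossing-residual} in high-precision interval arithmetic, with at least 30 digits and truncation at $|m|,|n|\le 6$ so the tail falls below $10^{-25}$. This yields the two opposite signs. For the quotient on the tiny interval between them, the steps are:
\begin{enumerate}
\item Enclose $\frac{d}{dx}\theta(1;x+i\sqrt{1-x^2})$ and $\frac{d}{dx}J(x+i\sqrt{1-x^2};1/2,1/2)$ at an endpoint.
\item Add a Lipschitz term of order $10^{-13}$ times a crude bound on the second derivatives.
\item Check that the $J$-derivative enclosure excludes $0$, so the quotient is well defined.
\item Perform interval division to obtain the asserted window around $0.17235658724$.
\end{enumerate}
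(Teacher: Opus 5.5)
\textbf{Verdict: the overall scheme matches the paper, but the step at $\alpha=7/40$ fails as written.} The following parts coincide with the paper:
\begin{itemize}
\item finite sums with the ring tail \eqref{eq:common-ring-tail};
\item interval division at $z=i$ for $\alpha_b,\alpha_c$;
\item high-precision evaluation of \eqref{eq:crossing-residual} at the two endpoints;
\item a derivative bound for the quotient, with $\frac{d}{dx}J$ bounded away from zero.
\end{itemize}
Your whole-arc covering at $\alpha=1723/10000$ is a legitimate variant. The paper instead bisects the arc derivative on $[3/10,2/5]$ to enclose the stationary point, then uses \cite{LW} to identify it as the centered minimum. You use \cite{LW} only to reduce to the arc, and pay by covering all of $[0,1/2]$.

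\textbf{The gap: the competitor $(3+i\sqrt{55})/8$ does not give the bound at $7/40$.} At $z=(3+i\sqrt{55})/8$ one finds numerically
\[
\theta(1;z)-\theta(1;e^{i\pi/3})\approx4.5281\cdot10^{-3},\qquad J(z;1/2,1/2)-J(e^{i\pi/3};1/3,1/3)\approx-2.62712\cdot10^{-2}.
\]
Hence $E_{7/40}(z;1/2,1/2)-E_{7/40}(e^{i\pi/3};1/3,1/3)\approx-6.941\cdot10^{-5}$. This lies above $-7\cdot10^{-5}$, so the asserted inequality is not certified.

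Your expected margin near $10^{-4}$ is wrong. You used $x_*\approx0.3754$, which is the crossing abscissa at $\alpha_a$, not the minimizer at $7/40$; the centered minimizer moves as $\alpha$ increases. At $x=3/8$ and $\alpha=7/40$:
\begin{itemize}
\item the arc derivative of $E_{7/40}(\cdot;1/2,1/2)$ is about $+7.4\cdot10^{-4}$;
\item its second derivative is about $0.39$;
\item so the minimizer sits near $x\approx0.3731$, and the true difference is only about $-7.012\cdot10^{-5}$.
\end{itemize}

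\textbf{How to repair it.} Take a competitor within roughly $8\cdot10^{-4}$ of that point, for example $x=373/1000$ on the unit arc. Alternatively, enclose the stationary point at $\alpha=7/40$ by bisection, as the paper does at both couplings. Either way, your enclosures must be accurate well below $10^{-7}$, because the stated inequality holds with a margin of only about $1.2\cdot10^{-7}$.

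So the tightest step is this upper bound, not the lower bound at $1723/10000$ that you flagged as the main obstacle. There the true difference is about $1.48\cdot10^{-6}$, leaving about $4.8\cdot10^{-7}$ of room, so your covering argument is feasible.

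\textbf{A minor slip.} $J_{xx}(i;1/2,1/2)$ is positive, about $0.79$, while $\theta_{xx}(1;i)\approx-0.295$; it is not negative. Only its separation from zero enters the interval division, so this does not affect the argument.
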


\begin{proof}
At $z=i$, the summands of $\theta_{xx}$ and $\theta_{yy}$ are
\[
 e^{-\pi(m^2+n^2)}(4\pi^2m^2n^2-2\pi m^2),\qquad
 e^{-\pi(m^2+n^2)}\{\pi^2(m^2-n^2)^2-2\pi n^2\}.
\]
For the centered $J$ derivatives insert the factor $(-1)^{m+n}$.
Each omitted derivative tail is bounded by
\[
 640\sum_{r\ge5}r^5e^{-3r^2}<6\cdot10^{-27}.
\]
Both denominator enclosures exclude zero. Interval division in
\eqref{eq:centered-thresholds} proves the stated enclosures and rounding.

On the arc $z=x+i\sqrt{1-x^2}$, primes denote derivatives of
$x\mapsto Q_{m,n}(x,\sqrt{1-x^2})$, with $Q_{m,n}$ from \eqref{eq:lattice-quadratic-form}.
Thus
\[
\begin{aligned}
 Q_{m,n}&=\frac{m^2+n^2-2mnx}{\sqrt{1-x^2}},\\
 Q_{m,n}'&=-\frac{2mn}{\sqrt{1-x^2}}
     +\frac{x(m^2+n^2-2mnx)}{(1-x^2)^{3/2}},\\
 Q_{m,n}''&=\frac{-4mnx(1-x^2)+(1+2x^2)(m^2+n^2-2mnx)}
 {(1-x^2)^{5/2}}.
\end{aligned}
\]
For $3/10\le x\le2/5$, we have
\[
 Q_{m,n}\ge\frac9{10}\max\{|m|,|n|\}^2,\qquad
 |Q_{m,n}'|<10\max\{|m|,|n|\}^2,\qquad
 |Q_{m,n}''|<30\max\{|m|,|n|\}^2
\]
for $(m,n)\ne(0,0)$.
Consequently the value, first-derivative, and second-derivative
errors in each truncated theta sum are bounded by
\[
 10^{-27},\qquad10^{-22},\qquad10^{-19},
\]
respectively, by \eqref{eq:common-ring-tail}. At the triangular
shape, the triangular deep-hole factors are
\[
 \cos\frac{2\pi(m+n)}3=
 \begin{cases}1,&m+n\equiv0\pmod3,\\-1/2,&m+n\not\equiv0\pmod3,
 \end{cases}
\]
and the value error is below $10^{-25}$.

At $\alpha=1723/10000,7/40$, the arc derivative is negative
at $x=3/10$ and positive at $x=2/5$. Forty-two rational
bisections enclose each stationary point in an interval of length
$1/(10\cdot2^{42})$. The Luo--Wei classification identifies these
points as the centered minima. Finite-sum evaluation on these
intervals proves the two energy inequalities in the statement.

Finally, substitution in the left-hand side of
\eqref{eq:crossing-residual} gives values below $-5\cdot10^{-16}$
and above $10^{-15}$ at
$x=0.3754117719806$ and $x=0.3754117719808$, respectively.
Moreover,
\[
 \frac d{dx}J(x+i\sqrt{1-x^2};1/2,1/2)>\frac{17}{50}
 \qquad(0.3754117719806\le x\le0.3754117719808).
\]
On the same interval,
\[
 \left|\frac d{dx}\left[
 -\frac{\frac d{dx}\theta(1;x+i\sqrt{1-x^2})}
 {\frac d{dx}J(x+i\sqrt{1-x^2};1/2,1/2)}
 \right]\right|<\frac65.
\]
Midpoint evaluation and this derivative bound give the asserted
quotient enclosure. This proves the lemma.
\end{proof}

\medskip

\section*{\bf Acknowledgements:} The research of S. Luo is partially supported by the National Natural Science Foundation
of China (NSFC) under Grant Nos. 12261045 and 12001253, and by the Jiangxi Jieqing
Fund under Grant No. 20242BAB23001. The research of J. Wei is partially supported by the General Research Fund (GRF) of Hong Kong "On Fujita equation in critical or supercritical regime". The authors conducted all mathematical arguments, proofs, and computations in this manuscript.

\enlargethispage{3\baselineskip}
\end{document}